\renewcommand{\setminus}{{\smallsetminus}}
\newcommand{\bp}{\begin{pmatrix}}
\newcommand{\ep}{\end{pmatrix}}
\newcommand{\be}{\begin{equation}}
\newcommand{\ee}{\end{equation}}
\newcommand{\ol}[1]{\overline{#1}}
\numberwithin{equation}{section}
\theoremstyle{plain}
\newtheorem{theorem}[equation]{Theorem}
\newtheorem{lemma}[equation]{Lemma}
\newtheorem{proposition}[equation]{Proposition}
\newtheorem{corollary}[equation]{Corollary}
\newtheorem{prop}[equation]{Proposition}
\newtheorem*{claim*}{Claim}
\theoremstyle{definition}
\newtheorem{example}[equation]{Example}
\newtheorem{remark}[equation]{Remark}
\newtheorem{definition}[equation]{Definition}
\newtheorem{scholium}[equation]{Scholium}
\numberwithin{equation}{section}
 \newtheoremstyle{TheoremNum}
        {}{}              
        {\itshape}                      
        {}                              
        {\bfseries}                     
        {.}                             
        { }                             
        {\thmname{#1}\thmnote{ \bfseries #3}}
\theoremstyle{TheoremNum}
\newtheorem{propn}{Proposition}
\newtheorem{thmn}{Theorem}
\def\N{\mathbb{N}}
\def\Z{\mathbb Z}
\def\R{\mathbb R}
\def\Q{\mathbb Q}
\def\C{\mathcal{C}}
\def\wt#1{\widetilde{#1}}
\def\sm{\setminus}
\def\a{\alpha}
\def\toiso{\xrightarrow{\simeq}}
\def\bp{\begin{pmatrix}}
\def\ep{\end{pmatrix}}
\def\ba{\begin{array}}
\def\ea{\end{array}}
\def\bn{\begin{enumerate}}
\def\en{\end{enumerate}}
\def\NG{\mathcal{N}\Gamma}
\DeclareMathOperator\Int{Int}
\DeclareMathOperator\Arf{Arf}
\DeclareMathOperator\cl{cl}
\DeclareMathOperator\Wh{Wh}
\DeclareMathOperator\Id{Id}
\DeclareMathOperator\im{im}
\DeclareMathOperator\gw{gw}
\newcommand{\smfrac}[2]{\mbox{\footnotesize$\displaystyle\frac{#1}{#2}$}} 
\begin{document}

\title[]{Grope metrics on the knot concordance set}

\author{Tim D. Cochran$^{\S}$}
\address{Rice University, Houston, TX, USA}

\author{Shelly Harvey$^{\dag}$}
\address{Rice University, Houston, TX, USA}
\email{shelly@rice.edu}

\author{Mark Powell$^\ddag$}
\address{
Universit\'e du Qu\'ebec \`a Montr\'eal, QC, Canada
}
\email{mark@cirget.ca}

\thanks{$^{\S}$ The first author was partially supported by National Science Foundation grant DMS-1309081 and by a grant from the Simons Foundation (\#304603 to Tim Cochran).  $^{\dag}$ The second author was partially supported by National Science Foundation grants DMS-1309070 and DMS-1309081 and by a grant from the Simons Foundation (\#304538 to Shelly Harvey). $^\ddag$ The third author was partially supported by an NSERC grant entitled ``Structure of knot and link concordance.''}

\def\subjclassname{\textup{2010} Mathematics Subject Classification}
\expandafter\let\csname subjclassname@1991\endcsname=\subjclassname
\expandafter\let\csname subjclassname@2000\endcsname=\subjclassname
\subjclass{%
 57M25, 
 57M27, 
 57N13, 
 57N70, 
}
\keywords{knot concordance, grope, metric space}

\begin{abstract}
To a special type of grope embedded in $4$-space, that we call a branch-symmetric grope, we associate a length function for each real number $q \geq 1$.  This gives rise to a family of pseudo-metrics $d^q$, refining the slice genus metric, on the set of concordance classes of knots, as the infimum of the length function taken over all possible grope concordances between two knots.  We investigate the properties of these metrics.  The main theorem is that the topology induced by this metric on the knot concordance set is not discrete for all $q>1$.  The analogous statement for links also holds for $q=1$. In addition we translate much previous work on knot concordance into distance statements. In particular, we show that winding number zero satellite operators are contractions in many cases, and we give lower bounds on our metrics arising from knot signatures and higher order signatures.  This gives further evidence in favor of the conjecture that the knot concordance group has a fractal structure.
\end{abstract}
\maketitle

\section{Introduction}

Let $\C$ be the set of smooth concordance classes of knots.
This is in fact a group under connected sum, but in this paper we will primarily consider $\C$ as a real valued metric space, and study satellite-type operators on $\C$ which are typically not homomorphisms.

An explicit study of metrics on the set of concordance classes of knots was begun by the first and second authors~\cite{Cochran-Harvey:2014-1}.  Metrics were studied arising from the slice genus, and the homology norm, which measures the second Betti number of a 4-manifold in which two knots $K$ and $J$ cobound an annulus.  One drawback of this study is that distances between knots are all integer valued.

Below we define, for each real number $q \geq 1$, a new pseudo-metric, $d^q$, which refines the slice genus by taking advantage of deeper, higher order geometric information in order to define rational (and potentially real) valued notions of distance.  A length is associated to a special type of grope that is built out of symmetric gropes, by counting the genera of the component surfaces.  The distance between two knots is then defined as the infimum of the lengths of all possible grope concordances between the two knots.   We will often omit the adjective ``pseudo.''

We show that our metrics capture much subtlety of knot concordance, in the following sense.  Let $U$ be the unknot, and let $\C^{top}$ be the set of topologically locally flat concordance classes of knots. For $q>1$, the distance between two topologically concordant knots vanishes, whence we obtain an induced pseudo-metric $d^q_{top}$ on $\C^{top}$.

\begin{thmn}[\ref{cor:notdiscrete}]
  For any $q>1$ there exist uncountably many sequences of knots $\{K_i\}_{i \geq 0}$ such that $d^q(K_i,U) > 0$ for all $i$ but $d^q(K_i,U) \to 0$ as $i \to \infty$.
In particular, if $q>1$ then neither of the topologies on $\C$ and $\C^{top}$, induced by $d^q$ and $d^q_{top}$ respectively, are discrete.
\end{thmn}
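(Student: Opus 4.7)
The plan is to produce the family $\{K_i\}$ by choosing, for each parameter $i$, a knot that cobounds a branch-symmetric grope of height $i$ with the unknot (yielding an explicit upper bound on $d^q(K_i,U)$ that tends to zero for $q>1$) yet has nonvanishing higher-order signature (yielding the matching strict positivity), and then to index the whole construction by a continuum of parameters to produce uncountably many such sequences.

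For the upper bound, I would choose $K_i$ to lie deep in the Cochran--Orr--Teichner grope filtration, specifically in the $i$-th term. By definition such a $K_i$ bounds a height-$i$ symmetric grope in $D^4$, which in turn can be realized as a branch-symmetric grope concordance between $K_i$ and $U$. Because the length function for $q>1$ weights the genera of the surfaces at depth $k$ by a factor that decays like $q^{-k}$, the total length of a height-$i$ grope built from unit-genus surfaces along a binary branching tree is bounded by a truncated geometric tail in $q^{-1}$, which goes to $0$ as $i \to \infty$. Thus $d^q(K_i, U) \to 0$.

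For the lower bound, I would invoke the signature and higher-order $L^2$-signature lower bounds on $d^q$ mentioned in the abstract: if $K_i$ is chosen as an iterated satellite (infection) of a nontrivial seed by auxiliary knots with sufficiently large classical signatures, then the Cochran--Harvey--Leidy machinery guarantees that the $i$-th-order Cheeger--Gromov $\rho$-invariant of $K_i$ is nonzero, and in fact bounded below by an explicit positive quantity depending only on the seed and the infecting knots. The paper's signature lower bounds then translate this into $d^q(K_i,U) > 0$. For the uncountable family, I would let the infecting knots at each stage vary over a two-element set indexed by a bit $\epsilon_k \in \{0,1\}$, producing a sequence $\{K_i^\epsilon\}_{i \geq 0}$ for each $\epsilon \in \{0,1\}^{\mathbb{N}}$; since each $K_i^\epsilon$ depends only on $(\epsilon_1,\dots,\epsilon_i)$, distinct $\epsilon$ yield genuinely distinct sequences, and the continuum cardinality of $\{0,1\}^{\mathbb{N}}$ gives uncountably many.

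The main obstacle is the lower bound, not the construction of the small-length gropes. Upper-bounding $d^q$ by realizing a grope is essentially combinatorial once the length function is unpacked, and the uncountable indexing is a standard flexibility of infection constructions. The real work is establishing that von Neumann signatures obstruct the existence of any branch-symmetric grope concordance of small $q$-length, i.e.\ producing an inequality of the form $|\rho_i(K)| \leq c_i \cdot \ell^q(\Gamma)$ for every branch-symmetric grope concordance $\Gamma$ from $K$ to $U$; this is where the interaction between the grope height and the depth in the solvable filtration must be handled carefully, and it is the step that forces one to restrict to \emph{branch-symmetric} gropes in the first place so that the usual grope-to-solvability implication remains quantitatively controlled.
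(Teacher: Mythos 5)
Your overall strategy coincides with the paper's: iterated satellite (doubling) operators applied to a seed knot of nonvanishing $\rho_0$-invariant, with the upper bound coming from realizing gropes of increasing height and the lower bound coming from $L^{(2)}$-signature obstructions, and uncountability from varying the infection data. That matches the structure of the paper's Proposition~\ref{ex:specificdecreasing} together with Corollary~\ref{cor:doublingincreaselength} and the CHL robust doubling operator machinery.

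However, there is a gap in your upper-bound argument, and you have also somewhat misidentified where the technical effort lies. You describe the height-$i$ grope as ``built from unit-genus surfaces along a binary branching tree'' and conclude that its length is ``bounded by a truncated geometric tail in $q^{-1}$.'' If a knot really did bound a height-$i$ grope with \emph{all} stages (including the first) genus one, then that grope would have length $1/(2q)^{i-1}$, which tends to zero already for $q=1$ -- so the restriction to $q>1$ would be unnecessary. The paper is explicit that no one knows how to produce non-slice knots bounding such ``minimal'' gropes, and that all known constructions of high gropes proceed by paying with first-stage genus. In the paper's iterated construction, each application of the operator $R^m(-,\eta)$ doubles the first-stage genus (because the geometric winding number is $2$), so after $n$ steps the first stage has genus $2^n$, and the resulting length is $2^n\cdot\frac{1}{q^{n+1}2^{n+1}}=\frac{1}{2q^{n+1}}$; the $2^n$ cancels the $2^{-n}$, leaving a pure $q^{-n}$ decay, which is precisely why $q>1$ is forced. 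This exact cancellation is the crux, and proving that the first-stage genus is \emph{exactly} $2^n$ while all higher stages stay genus one requires the careful cap-and-multiplicity bookkeeping in Proposition~\ref{prop:effectinfections} and Lemma~\ref{lem:goodeta}. So the ``essentially combinatorial'' upper bound is in fact a substantial part of the work.

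A smaller point on the lower bound: you propose a quantitative inequality of the form $|\rho_i(K)|\le c_i\cdot\|\Gamma\|^q$ for all grope concordances $\Gamma$. Such an inequality is proved in the paper's Section~\ref{section:higher-order-rho-invariants}, but it is \emph{not} what the proof of Theorem~\ref{cor:notdiscrete} uses. The proof instead shows the weaker statement $K_n^m\notin\mathcal{G}_{n+3}$ (via the robust doubling operator criterion of CHL5) and then feeds this into the elementary bound of Proposition~\ref{prop:boundedbelow}, which only counts the depth of the shallowest branch and gives $\|K_n^m\|^q\ge 1/(2q)^{n+1}$. This is simpler and suffices, since all that is needed is strict positivity, not a refined numerical lower bound.
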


For links of at least two components, the analogous result holds for $q=1$ as well in the smooth case; see Section~\ref{section:links-q-equals-one}.

We stated the second sentence of the theorem for both the smooth and topological cases, but the proof for the smooth case is relatively straightforward.  A pseudo-metric that is not a metric certainly does not induce a discrete topology, and for $q>1$, every topologically slice knot has distance zero from the unknot (see Remark~\ref{remark:top-slice-knots}), so in fact $d^q$ only induces a pseudo-metric on $\C$, and not a metric.
Thus in the smooth case the second sentence of Theorem~\ref{cor:notdiscrete} can be deduced without the first sentence, whereas in the topological case one of the sequences of knots from the first sentence of the theorem seems to be required.  Even for $\C^{top}$, we are not able to show that $d^q$ is an honest metric.  However, in order to show non-discreteness in this way, one would have to find a non-topologically slice knot that lies in the intersection of the grope filtration of knot concordance i.e.\ that bounds a height $n$ symmetric grope for every~$n$.  Since this latter question remains open, one needs the sequences of knots of Theorem~\ref{cor:notdiscrete} to show that $\C^{top}$ is non-discrete when $q>1$.

Next, in addition to studying the metric space, we consider the action of natural operators on $(\mathcal{C}, d^q)$ called satellite operators.  We show that winding number zero operators are often contraction operators.   This gives further evidence that $\mathcal{C}$ has the structure of a fractal space as conjectured in \cite{CHL5}.

Let $\C^{\ell}$ be the set of concordance classes of $m$-component links with all pairwise linking numbers vanishing.  We will extend our metric to this set.  Let $\mathcal{C}^m_{SL}$ be the concordance group of $m$-component string links, also with pairwise linking numbers vanishing.

\begin{propn}[\ref{prop:contraction}]
For any winding number zero string link operator $R(-,\eta)$ there is an $N$, depending only on the geometric winding numbers of $R(-,\eta)$, such that for each $q>N$, $R\colon (\C^m_{SL},d^q) \to (\C^\ell,d^q)$ is a contraction mapping. In particular, for any winding number zero pattern knot $P$, and any $q$ greater than the geometric winding number of $P$, the satellite operator $P\colon (\C,d^q)\to(\C,d^q)$ is a contraction mapping.
\end{propn}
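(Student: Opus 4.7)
The plan is, given any grope concordance $G$ from $L$ to $L'$ with length close to $d^q(L,L')$, to construct a grope concordance $\widetilde G$ from $R(L,\eta)$ to $R(L',\eta)$ satisfying $\ell^q(\widetilde G) \le (N/q)\,\ell^q(G)$, where $N$ denotes the sum of the geometric winding numbers of $R(-,\eta)$ along the infection arcs $\eta$. Taking the infimum over $G$ then gives $d^q(R(L,\eta), R(L',\eta)) \le (N/q)\,d^q(L, L')$, which is a strict contraction when $q > N$. The knot-pattern specialization in the second sentence follows by viewing $P$ as a one-component string-link pattern, so that $N$ equals the geometric winding number of $P$.

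The construction of $\widetilde G$ relies on the standard observation that a winding-number-zero satellite operator ``shifts'' the grope structure one stage deeper. Specifically, because each component $R_i$ has algebraic winding number zero along $\eta_i$, it bounds a Seifert surface $\Sigma_i$ in the solid torus $V_{\eta_i}$, and this Seifert surface provides the mechanism to absorb the $n_i$ geometric intersections of $R_i$ with any meridional disk of $V_{\eta_i}$. After an isotopy making $G$ disjoint from $\eta$, each stage-$i$ surface of $G$ gives rise to at most $N$ parallel pushoff copies inside the satellite (one pushoff for each algebraic strand of the pattern through the relevant meridional disk), and these become the stage-$(i+1)$ surfaces of $\widetilde G$. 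The new outermost stage of $\widetilde G$ arises from the satellite of the annular product structure $L \times I$ together with the Seifert surfaces $\Sigma_i$; the branch-symmetry of $G$ allows this outermost stage to be arranged so that the corresponding symplectic basis curves are already bounded by the next-stage surfaces, contributing zero to $\ell^q(\widetilde G)$ at the first level.

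A direct length computation then yields
\[
\ell^q(\widetilde G) \;\le\; \sum_{i \ge 2} q^{-(i-1)}\cdot N\, g_{i-1}(G) \;=\; \frac{N}{q}\,\sum_{j \ge 1} q^{-(j-1)}\, g_j(G) \;=\; \frac{N}{q}\,\ell^q(G),
\]
where $g_i(G)$ denotes the total genus at stage $i$ of $G$ and the weight $q^{-(i-1)}$ is that used in the definition of $\ell^q$.

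The main obstacle will be verifying that $\widetilde G$ is genuinely a \emph{branch-symmetric} grope rather than merely a generic one; this requires an inductive argument tracking how the symplectic basis on each surface of $G$ transforms through the satellite operation into a symplectic basis on each corresponding surface of $\widetilde G$, and checking that the pushoffs preserve the symmetric-branch property. A secondary subtlety is arranging the outermost stage of $\widetilde G$ so that it indeed contributes zero genus in the branch-symmetric counting, avoiding any additive Seifert-surface constant that would spoil the contraction; this uses the freedom to choose $G$ with sufficient grope height together with the Seifert-surface structure of the pattern.
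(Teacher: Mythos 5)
Your high-level strategy is the right one --- given a grope concordance $\Sigma$ between $L_0$ and $L_1$, construct a grope concordance $G$ between $R(L_0)$ and $R(L_1)$ with $\|G\|^q \leq (N/q)\|\Sigma\|^q$, and then take the infimum --- but both the formula you use for the length function and the geometric construction you describe differ from the paper's and introduce genuine gaps.

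The central problem is your ``direct length computation.'' You write $\ell^q(G) = \sum_{i}q^{-(i-1)}g_i(G)$, a sum over stages with a linear genus contribution at each stage. That is not the definition in this paper. Equation~\eqref{eq:knot_grope_formula} defines
\[
\|\Sigma\|^q = \sum_{i=1}^{g_1(\Sigma)}\frac{1}{q^{n_i(\Sigma)}}\left(1-\sum_{k=2}^{n_i(\Sigma)+1}\frac{1}{g_k^i(\Sigma)}\right),
\]
a sum over the \emph{branches} at the first stage, weighted by $q^{-n_i}$ and penalized by \emph{reciprocals} of higher-stage genera. These formulas behave oppositely: in the paper's definition adding higher stages always \emph{decreases} the norm (it is what rewards deep gropes and enables the contraction), whereas in your formula adding stages adds positive terms. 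With the correct formula your chain of equalities does not make sense as stated, and in particular there is no ``first level'' whose contribution can be argued away simply by making its genus small --- indeed $g_1(\widetilde G)$ cannot be zero unless $R(L_0)$ and $R(L_1)$ are already concordant, and each unit of first-stage genus contributes at least $1/(2q)^{n_i}$ to $\|\widetilde G\|^q$.

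The geometric construction is also reversed relative to what actually works. You describe pushing each stage of $G$ ``one stage deeper'' (stage $i$ becomes stage $i+1$ of $\widetilde G$), with a Seifert surface for $R$ in the solid torus serving as the new outermost stage. The paper's construction (Proposition~\ref{prop:effectinfections}) goes the other way: the satellite takes $w_j^s$ parallel copies of each component grope $\Sigma_j$, so the \emph{first}-stage genus gets multiplied by the winding numbers; the cap tips of $\Sigma_j$ become copies of $\eta_j$ after infection; and because $\eta$ has winding number zero, Lemma~\ref{lem:doublingbounds} provides a height-one grope $G_\eta$ for $\eta$ in the complement of $R$, which is attached \emph{at the tips}, lengthening each branch by exactly one. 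Using a Seifert surface for $R$ in the solid torus does not produce a grope: its symplectic basis curves need not bound any higher-stage surfaces after infection, whereas $\eta$ \emph{does} bound a grope in $S^3\setminus R$ by the linking-number-zero condition. Once the correct construction and formula are in hand, the estimate becomes: each branch $i$ of $\Sigma_j$ appears $w_j^s$ times in $G_s$ with $n_i$ replaced by $n_i+1$, and discarding the (helpful, negative) contribution of the new top-stage genus gives
\[
\|G\|^q \leq \sum_{s,j}\frac{w_j^s}{q}\|\Sigma_j\|^q = \frac{1}{q}\sum_j w_j\|\Sigma_j\|^q \leq \frac{\max_j w_j}{q}\|\Sigma\|^q,
\]
so the sharp constant is $N=\max_j w_j$, not the sum you propose. (Your $N$ also works, just suboptimally.) The secondary subtleties you flag --- branch symmetry and the treatment of the new stage --- are real but minor once the definition and construction are corrected.
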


The same holds if $R$ is a string link, and the codomain of the operator $R(-,\eta)$ becomes $\C^\ell_{SL}$.

Of course there are many possible metrics of this character which one could define on the knot concordance set; perhaps the reader will come up with his or her own.  However, we think that any reasonable notion ought to satisfy certain meta-properties, which are satisfied by our metrics.  These are as follows.
\begin{itemize}
  \item Refines the slice genus metric.  In particular the slice genus is an upper bound.
  \item Winding number zero operators are contractions.
  \item Reflects the known complexity of knot concordance; has a relation to the algebraic concordance group, Casson-Gordon invariants, and the $n$-solvable filtration.
  \item Induces a non-discrete topology.
\end{itemize}

One may wonder why we do not consider metrics defined using half-gropes rather than symmetric gropes.
After all, in order to surger a surface to a disk, one only requires that a half basis of curves bounds framed disjointly embedded disks; the other half of first homology need not bound any surface at all.  However beware that all knots with Arf invariant zero bound half-gropes of arbitrarily large order~\cite{Schneiderman:2005}, therefore all Arf invariant zero knots would likely have very small norm in such a metric space.  Also, the construction of topological disks in Freedman-Quinn~\cite{FQ} uses symmetric gropes.   Dual spheres have an extra significance in dimension 4: the slogan of \cite[Chapter~5]{FQ} is that disks can be embedded topologically when there is a good fundamental group, in the presence of dual spheres.  So it seems to be natural to only consider symmetric gropes as being close approximations to disks.

Another refinement of the slice genus already in the literature is the stable slice genus of C.~Livingston~\cite{Li11}. However this is not known to satisfy the properties above other than the first.  In particular, no knot is known to have nonzero stable slice genus less than $1/2$.

In a future paper we will consider another metric on the smooth knot concordance group which has a closer relationship to the bipolar filtration~\cite{Cochran-Harvey-Horn:2013-1} and ``smooth invariants'' of knot concordance, which do not necessarily vanish on topologically slice knots, such as those invariants arising from Heegaard Floer theory, Khovanov homology, and contact topology.

\subsection*{Organization of the paper}

Section~\ref{section:defn-of-metric} gives the definition of our metrics and many of their important properties.
Section~\ref{section:relations-to-knot-signatures} proves that the knot signatures give lower bounds.
Section~\ref{section:defns-string-links} gives definitions involving string links and string link infections which are needed for the rest of the paper.
Section~\ref{sec:infections} investigates the effect of string link infections and satellite operators on the metrics.  It is shown that winding number zero operators are contractions whenever $q$ is larger than the geometric winding number of the operator.
Examples of knots which yield sequences of knots exhibiting the non-discrete behaviour of $\C$ are constructed in Section~\ref{section:examples-for-non-discrete}.
The analogous examples for the link case are given in Section~\ref{section:links-q-equals-one}.
Section~\ref{section:higher-order-rho-invariants} then proves lower bounds arising from higher order $L^{(2)}$ $\rho$-invariants.
Finally Section~\ref{section:quasi-isometries} shows that the identity map is not a quasi-isometry between our metric spaces and knot concordance with the slice genus metric.

\subsection*{Concerning Tim Cochran} We are saddened by the loss of the first author, Tim Cochran, who passed away on December 16, 2014, before the completion of this paper.   Tim played an extremely important role in this paper and wrote a large portion of it.   However, since he was neither able to verify nor influence the final version, any errors in the paper should be attributed to the other two authors.

\subsection*{Acknowledgements}

We would like to thank Maciej Borodzik, Jae Choon Cha, Stefan Friedl, Kent Orr, Rob Schneiderman, Peter Teichner, and Diego Vela for their valuable input during discussions on our grope metrics.
We thank the referee for a careful reading and many excellent, thoughtful comments that helped us to improve the exposition.

We are grateful to the Max Planck Institute for Mathematics in Bonn, as many of the results were obtained while all three of us were visitors there, and the paper was written while the third author was a visitor.

\section{Definition of the metric}\label{section:defn-of-metric}

A \emph{grope} is a special type of 2-complex, with a decomposition into a union of finitely many stages $\bigcup_k \, X_k$, and a specified boundary.
  Each stage $X_k$ is a union of surfaces with boundary, whose interiors are disjoint. When $k \geq 2$ each surface has exactly one boundary component.  The boundary of $X_1$ is the boundary of the grope.  The intersection $X_k \cap X_j =\emptyset$ for $|k-j|>1$, and the intersection $X_k \cap X_{k+1}$, for $k \geq 1$, is equal to the boundary of $X_{k+1}$ and forms a subset of a standard symplectic basis for the first homology of $X_k$.

 We will work with two types of gropes.  First we define \emph{symmetric gropes}, then we use these to define \emph{branch-symmetric} gropes.
A \emph{symmetric grope} has a \emph{height} $n \in \N$.
For $n = 1$ a symmetric grope is precisely a compact, oriented surface $G_1$ with a single boundary component on each connected component; the boundary components are called the base circles.  A symmetric grope $G_{n+1}$ of \emph{height} $n+1$ is defined
  inductively as follows: take a height one symmetric grope $G_1$ and let $\{\alpha_j\,|\, j = 1, \ldots, 2
   g(G_1)\}$ be a standard symplectic basis of circles
  for the first homology of~$G_1$ where $g(G_1) = \text{genus}(G_1)$. Then a symmetric grope of height $n+1$ is formed by attaching a
  connected symmetric grope $G^j_n$ of height $n$ to each $\alpha_j$ along its base circle; the base circles of $G_{n+1}$ are the boundary components of $G_1$.
The \emph{$k$th stage} of the symmetric grope is the union of the surfaces that were introduced by the $(n-k+1)$th inductive step in the construction, where taking a height one symmetric grope counts as the first step.  We denote the stages of a symmetric grope from $p$ through $q$ inclusive by~$G_{p: q}$.

Next we define a \emph{branch-symmetric grope}.  Let $\Sigma_{1: 1}$ be a compact oriented surface with either one or two boundary components on each connected component and let $\{\alpha_j\,|\, j = 1, \ldots, 2
 g(\Sigma_{1:1})\}$ be a standard symplectic basis of circles for the first homology of $\Sigma_{1: 1}$ where $\a_{2i-1}$and $\a_{2i}$ form a dual pair for each $1\leq i \leq g(\Sigma_{1:1})$. For each $\alpha_j$ for $j = 1, \ldots, 2
  g(\Sigma_{1:1})$, attach a connected symmetric grope $G^j_{m_j}$ of some height $m_j \geq 0$ to $\alpha_j$, no subsurface of which is a disk, and such that $m_{2i} = m_{2i-1}$ for $1\leq i \leq g(\Sigma_{1:1})$.  For this purpose we take a grope of height $0$ to be the empty set.     This defines a branch-symmetric grope $\Sigma$.  It may be that $\Sigma$ is not a symmetric grope.

In the above definition, we call the union of the two gropes attached to a single dual pair of basis curves $\alpha_{2i}$ and $\alpha_{2i-1}$ a \emph{branch} of $\Sigma$.  Let $m_j(\Sigma)$ be the height of the grope attached to $\alpha_j$.
We define the \emph{length} of the branch to be $n_i(\Sigma):= m_{2i}(\Sigma)= m_{2i-1}(\Sigma)$.
When it is clear, we write $n_j$ instead of $n_j(\Sigma)$.
Note that for each $i$, $\alpha_{2i-1}$ and $\alpha_{2i}$ must each have a grope of the same height attached to them, but the various surface stages may have different genera (they do not have to be the homeomorphic gropes).  We say that the \emph{$(k-1)$th stage surfaces} $G^j_{(k-1) : (k-1)}$ of the symmetric gropes $G^j_{n_j}$ are the $k$th stage surfaces $\Sigma_{k: k}$ of $\Sigma$.

For a branch-symmetric grope $\Sigma$, let $g_1(\Sigma)$ be the genus of $\Sigma_{1 : 1}$.  For each $i=1,\dots,g_1(\Sigma)$, we have a dual pair of basis curves $\a_{2i-1}$ and $\a_i$.  Define $g_2^i(\Sigma)$ to be the sum of the genera of the two second stage surfaces which are attached to these basis curves $\a_{2i-1}$ and $\a_{2i}$.
We denote the union of these two surfaces by $\Sigma_{2: 2}^i$.  If $n_i(\Sigma)=0$ then $g_2^i(\Sigma)=0$. Suppose $n_i(\Sigma)\geq 1$. Note that in this case $g_2^i(\Sigma)\geq 2$ since we do not allow disks. Inductively let $g_k^i(\Sigma)$, for $3\leq k\leq n_i(\Sigma)+1$, be the sum of the genera of the surfaces at the $k$th stage which are attached to the basis curves of $\Sigma_{(k-1):(k-1)}^i$.  Since $g_{k-1}^i(\Sigma)$ is the genus of  the collection $\Sigma_{(k-1):(k-1)}^i$, there are precisely $2g_{k-1}^i(\Sigma)$ surfaces at the $k$th stage which are attached to the basis curves of $\Sigma_{(k-1):(k-1)}^i$.  We denote the union of these surfaces by  $\Sigma_{k:k}^i$.  When it is clear, we will drop the $\Sigma$ and  write $g_k^i$ (respectively $g_1$) instead of $g_k^i(\Sigma)$ (respectively $g_1(\Sigma)$).

\begin{lemma}\label{lemma:counting-genera}
Let $\Sigma$ be a branch-symmetric grope.  For each  $1 \leq i \leq g_1(\Sigma)$ and $2\leq k\leq n_i(\Sigma)+1$ we have
$$
g_k^i(\Sigma) \geq 2g_{k-1}^i (\Sigma) \geq 2^{k-1}.
$$
\end{lemma}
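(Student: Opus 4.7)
The plan is a straightforward induction on $k$, leveraging two features of the definition of a branch-symmetric grope. First, the restriction that no subsurface of $\Sigma$ is a disk forces every stage surface to have genus at least $1$. Second, the definition explicitly records that there are precisely $2g_{k-1}^i(\Sigma)$ surfaces at the $k$th stage attached to basis curves of $\Sigma^i_{(k-1):(k-1)}$.

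First I would handle the base case $k=2$. Since we are in the regime $2 \leq k \leq n_i(\Sigma)+1$, we have $n_i(\Sigma) \geq 1$, so by the definition each of the two dual basis curves $\alpha_{2i-1}$ and $\alpha_{2i}$ has a nonempty symmetric grope attached. The first stages of these two gropes are the two surfaces comprising $\Sigma^i_{2:2}$; since neither is a disk, each has genus $\geq 1$. Thus $g_2^i(\Sigma) \geq 2 = 2^{2-1}$, which also agrees with the bound $2g_1^i(\Sigma)$ if we interpret $g_1^i(\Sigma) = 1$ for the single dual pair, matching the ``$g_k^i \geq 2g_{k-1}^i$'' inequality (this reading is what I'd verify carefully against the authors' indexing conventions).

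For the inductive step, suppose $g_{k-1}^i(\Sigma) \geq 2^{k-2}$ for some $k$ with $3 \leq k \leq n_i(\Sigma)+1$. By definition, $g_k^i(\Sigma)$ is the sum of the genera of the $2g_{k-1}^i(\Sigma)$ surfaces in $\Sigma^i_{k:k}$, each of which has genus at least $1$ since no subsurface is a disk. Hence
\[
g_k^i(\Sigma) \;\geq\; 2g_{k-1}^i(\Sigma) \;\geq\; 2 \cdot 2^{k-2} \;=\; 2^{k-1},
\]
which establishes both inequalities of the lemma simultaneously.

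There is no real obstacle here; the argument is purely bookkeeping. The only delicate point is bookkeeping the indexing correctly, in particular verifying at the base case that the ``no disks'' hypothesis applies to the surfaces in $\Sigma^i_{2:2}$ (which it does, since they are honest stage surfaces of the attached symmetric gropes $G^{2i-1}_{n_i}$ and $G^{2i}_{n_i}$, rather than the base surface $\Sigma_{1:1}$). Once this is pinned down, the two inequalities fall out immediately from the inductive estimate.
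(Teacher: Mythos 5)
Your proof is correct and follows essentially the same approach as the paper's (very terse) proof, which simply observes that the no-disk hypothesis forces every stage surface to have genus at least one, so $g_k^i \geq 2g_{k-1}^i$, and the power-of-two bound follows by induction. Your careful handling of the base case and the implicit convention $g_1^i = 1$ is a correct reading of the paper's (unstated) indexing.
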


\begin{proof}
Each surface has genus at least one.  The second inequality follows from induction.
\end{proof}

We say that two oriented knots $K$ and $J$ in $S^3$ \emph{cobound a branch-symmetric grope $\Sigma$} if $\Sigma$ is a branch-symmetric grope, $\Sigma_{1 : 1}$ has two boundary components, and we have a framed smooth embedding $\iota\colon \Sigma \hookrightarrow S^3 \times I$ where $\iota^{-1}(\partial (S^3 \times I)) = \partial \Sigma_{1:1}$, $\iota_{| {\iota^{-1}(S^3 \times \{0\})}} = K$ and $\iota_{| {\iota^{-1}(S^3 \times \{1\})}} = -J$.  We also say that $K$ and $J$ are \emph{grope concordant via $\Sigma$}.

Let $q \geq 1$ be a real number.  We define a metric for each $q\geq 1$ as follows.  For a branch-symmetric grope $\Sigma$, define
\begin{equation}\label{eq:knot_grope_formula} \|\Sigma\|^q :=   \sum_{i=1}^{g_1(\Sigma)}\frac{1}{q^{n_{i}(\Sigma)}} \bigg( 1- \sum_{k=2}^{n_{i}(\Sigma)+1} \frac{1}{g_k^{i}(\Sigma)}\bigg).\end{equation}
 Now we set
\[d^q(K,J) := \inf_{\Sigma}\{\|\Sigma\|^q \mid K\text{ and }J \text{ cobound a branch-symmetric grope } \Sigma \text{ in } S^3\}.\]

\begin{remark}~
\begin{enumerate}
\item
As a basic example, a surface $\Sigma$ of genus $g$, with no higher surfaces, has $\|\Sigma\|^q = g$ for any $q \geq 1$.
In particular the slice genus of $K$ is an upper bound for $d^q(K,U)$, where $U$ is the unknot.
  Any slice genus one knot with Arf invariant one has distance one from the unknot, since the Arf invariant obstructs the knot from bounding any height~2 grope by~\cite[Theorem~8.11~and~Remark~8.2]{COT}.

\item
Note that for constant $g_1(\Sigma)$, and $q>1$, the metric $d^q_{\Sigma}$ approaches zero for high height gropes.  For $q>1$, the same is true if every surface has genus one, since then the grope distance is \[1 - \smfrac{1}{2} - \smfrac{1}{4} - \smfrac{1}{8} - \cdots - \smfrac{1}{2^{n_i(\Sigma)}}.\]

Here is a concrete example a computation of $\|\Sigma\|^q$.
Consider the grope $\Sigma$ shown in Figure~\ref{figure:example-grope}

\begin{figure}[h]
\begin{center}
\begin{tikzpicture}
\node[anchor=south west,inner sep=0] at (0,0){\includegraphics[scale=0.7]{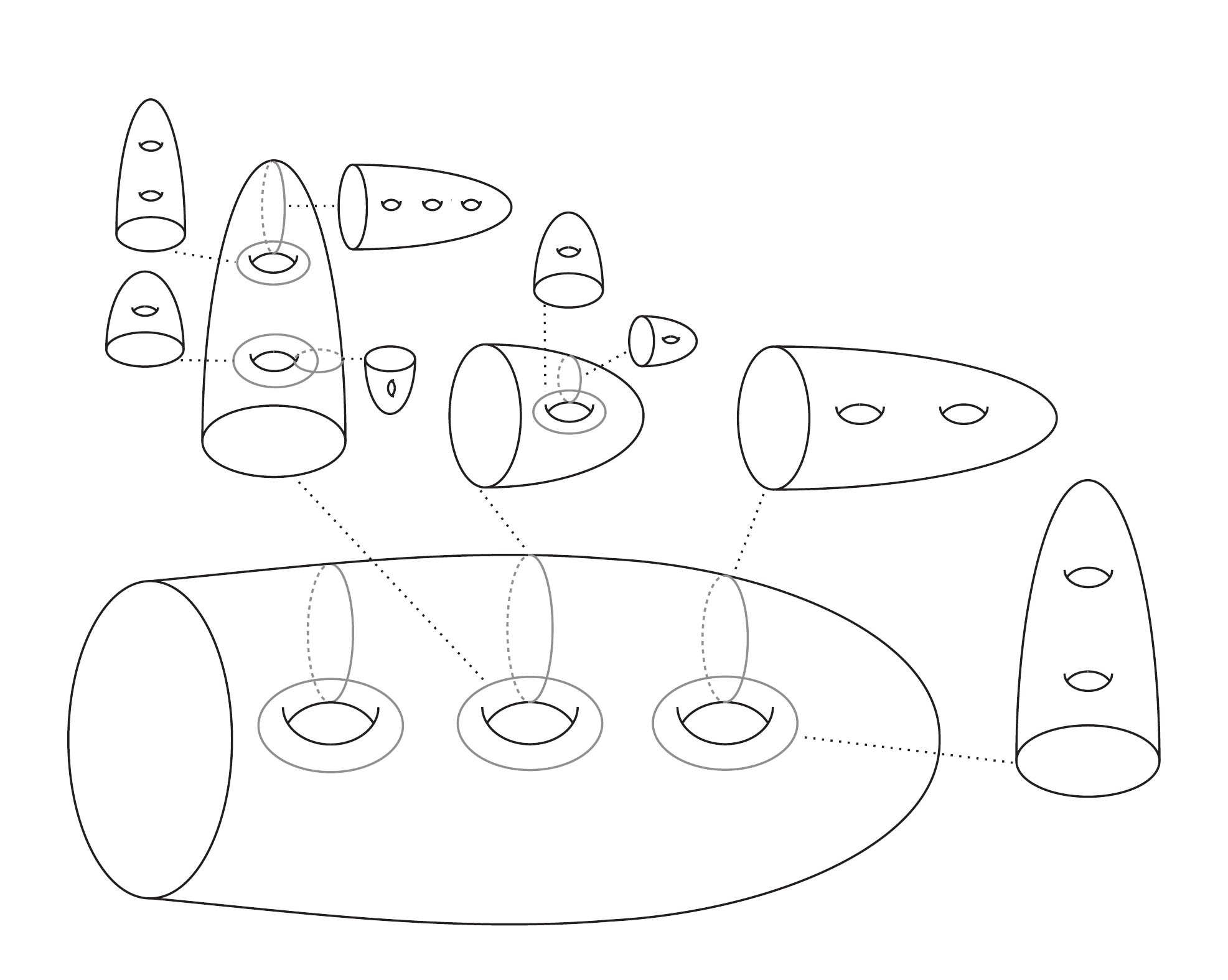}};
\node at (3.05,2.35)  {$\alpha_1$};
\node at (4.3,4)  {$\alpha_2$};
\node at (5.3,2.35)  {$\alpha_3$};
\node at (6.58,4)  {$\alpha_4$};
\node at (7.6,2.35)  {$\alpha_5$};
\node at (8.83,3.9)  {$\alpha_6$};
\end{tikzpicture}
\end{center}
     \caption{A branch-symmetric grope $\Sigma$.}
    \label{figure:example-grope}
\end{figure}

The first stage surface has genus three, so $g_1=3$.  Enumerate the holes/symplectic basis pairs from left to right.  The first pair $\alpha_1,\alpha_2$ has no higher stage surfaces attached to it, so $n_1=0$.  The second pair $\alpha_3,\alpha_4$ has height $2$ symmetric gropes attached, and the third pair $\alpha_5,\alpha_6$ has height one gropes attached.  Thus $n_2=2$ and $n_3=1$.  Next, the second stage surfaces attached to $\alpha_3$ and $\alpha_4$ are of genus two and one respectively.  Therefore $g_2^2=2+1=3$.  The third stage surfaces have the sum of their genera $g^2_3 =(1+1)+(1+1+2+3)=9$.  Finally the second stage surfaces attached to $\alpha_5$ and $\alpha_6$ both have genus two, so $g_2^3=4$.  We can therefore compute the length of $\Sigma$ to be
\[\|\Sigma\|^q = \smfrac{1}{q^0}(1) + \smfrac{1}{q^2}\left(1- \smfrac{1}{3} - \smfrac{1}{9}\right) + \smfrac{1}{q}\left(1- \smfrac{1}{4}\right) = 1+\smfrac{5}{9q^2} + \smfrac{3}{4q}.\]
\end{enumerate}
\end{remark}

\begin{remark}
  There is a natural trade off between the height of gropes versus the genus of the first stage surface.  One can often increase the height of the grope at the expense of increasing the genus of the first or subsequent stages; constructions can be found in, for example, \cite{CT, Hor2}, and our Proposition~\ref{prop:effectinfections}.  For a high $q$ parameter a high grope is valued more, whereas for a low $q$ parameter a low genus of the first stage has more value.  Here ``more value'' means: gives rise to smaller distance.

  We also remark on the operation of grope splitting~\cite[Lemma~4]{krushkal2000exponential}.
  Iterating this operation can make every surface at the second stage or higher of genus one, at the cost of a huge increase in the first stage genus.
An example is shown in Figure~\ref{figure:grope-splitting}. On the left the first stage surface has genus one, one of the second stage surfaces has genus two and the other has genus one.  The length with $q=1$ is $1-\smfrac{1}{3} = \smfrac{2}{3}$. On the right, after grope splitting, the first stage surface is genus two, and there are now four genus one surfaces in the second stage.  Two come from parallel copies of the genus one second stage surface from the original grope, and two come from splitting the genus two second stage surface of the original grope into two genus one second stage surfaces for the new split group, via a tube that adds a handle to the original first stage surface.  The length with $q=1$ is $\big(1 - \smfrac{1}{2}\big) + \big(1 - \smfrac{1}{2}\big) = 1$.  So we have arranged all higher surfaces to be genus one, but it has increased the genus of the first stage and increased the length of the grope.  Setting $q>1$ simply divides both lengths by~$q$.

 \begin{figure}[h]
\begin{center}
\begin{tikzpicture}
\node[anchor=south west,inner sep=0] at (0,0){\includegraphics[scale=0.65]{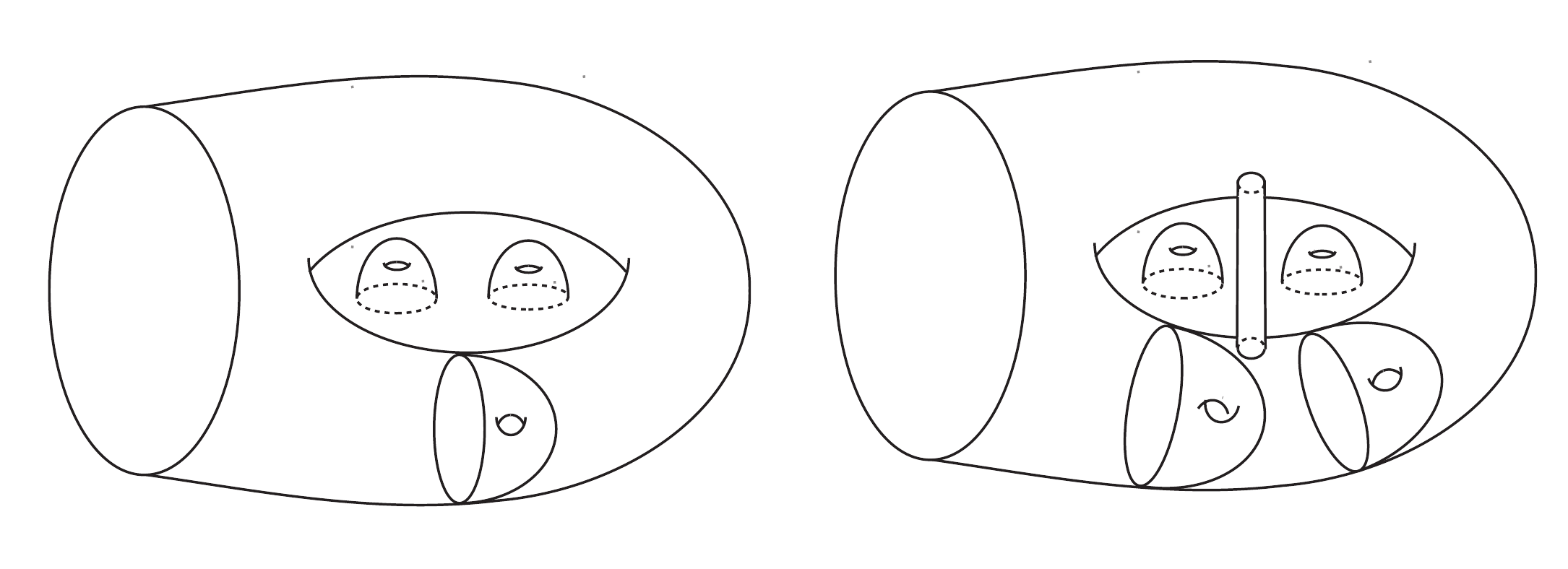}};
\end{tikzpicture}
\end{center}
     \caption{Grope splitting.}
    \label{figure:grope-splitting}
\end{figure}
  \end{remark}

\begin{lemma}
  For every $q \geq 1$, the function $d^q$ determines a pseudo metric on the set of concordance classes of knots $\C$.
\end{lemma}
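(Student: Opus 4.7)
The plan is to verify the four standard pseudo-metric axioms (non-negativity, vanishing on the diagonal, symmetry, triangle inequality), plus well-definedness on concordance classes. Non-negativity is not completely obvious from the formula (\ref{eq:knot_grope_formula}), so I would begin there. For each fixed branch index $i$, Lemma~\ref{lemma:counting-genera} gives $g_k^i(\Sigma) \geq 2^{k-1}$, so
\[
\sum_{k=2}^{n_i+1} \frac{1}{g_k^i(\Sigma)} \leq \sum_{k=2}^{n_i+1} \frac{1}{2^{k-1}} = 1 - \frac{1}{2^{n_i}} < 1.
\]
Hence each summand of $\|\Sigma\|^q$ is nonnegative and $\|\Sigma\|^q \geq 0$, giving $d^q \geq 0$.

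For $d^q(K,K) = 0$, I would use the trivial product annulus $K \times I \subset S^3 \times I$. This is a branch-symmetric grope with $\Sigma_{1:1}$ an annulus, so $g_1(\Sigma) = 0$ and the sum defining $\|\Sigma\|^q$ is empty, giving $0$. Symmetry $d^q(K,J) = d^q(J,K)$ follows by pulling back a grope realising a near-infimal distance via the self-homeomorphism $(x,t) \mapsto (x,1-t)$ of $S^3 \times I$; this swaps the boundary knots and preserves the combinatorial data (first-stage genus, branch lengths, and all higher-stage genera) that enter $\|\Sigma\|^q$. The same trick, applied to a concordance from $K$ to $K'$ stacked onto a grope from $K$ to $J$, shows that $d^q$ descends to $\mathcal{C}$: gluing a product-like annular concordance to $\Sigma_{1:1}$ along a base circle produces a branch-symmetric grope between $K'$ and $J$ with the same length, so $d^q(K',J) \leq d^q(K,J)$, and symmetry of the argument gives equality.

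The main step, and the only one that requires more than a direct check, is the triangle inequality. Given $\epsilon > 0$, choose branch-symmetric gropes $\Sigma_1 \subset S^3 \times [0,1]$ from $K$ to $J$ and $\Sigma_2 \subset S^3 \times [1,2]$ from $J$ to $L$ with
\[
\|\Sigma_1\|^q < d^q(K,J) + \tfrac{\epsilon}{2}, \qquad \|\Sigma_2\|^q < d^q(J,L) + \tfrac{\epsilon}{2}.
\]
After a small isotopy so that the embeddings match along $S^3 \times \{1\}$, form $\Sigma := \Sigma_1 \cup_J \Sigma_2 \subset S^3 \times [0,2]$ by gluing the two first-stage surfaces along the boundary circle $J \times \{1\}$. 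An Euler characteristic computation shows that the resulting first-stage surface has exactly two boundary components and genus $g_1(\Sigma_1) + g_1(\Sigma_2)$; its standard symplectic basis is the disjoint union of those of $\Sigma_1$ and $\Sigma_2$, and the branches attached to these curves are exactly the original branches of $\Sigma_1$ and $\Sigma_2$ with all of their higher-stage data unchanged. Reindexing the sum in (\ref{eq:knot_grope_formula}) therefore yields the additivity
\[
\|\Sigma\|^q = \|\Sigma_1\|^q + \|\Sigma_2\|^q < d^q(K,J) + d^q(J,L) + \epsilon,
\]
and $\Sigma$ is a branch-symmetric grope cobounded by $K$ and $L$ in $S^3 \times I$ after rescaling the interval. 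Letting $\epsilon \to 0$ gives the triangle inequality.

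The only genuinely delicate point is the bookkeeping in the additivity $\|\Sigma\|^q = \|\Sigma_1\|^q + \|\Sigma_2\|^q$: one must confirm that the requirement $m_{2i-1} = m_{2i}$ within each branch persists under the gluing (it does, because the gluing does not alter any branch) and that no basis curve of the glued first-stage surface is orphaned with a disk attached (which is forbidden by the definition). Once these are checked, the rest of the proof is formal.
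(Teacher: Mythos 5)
Your proof is correct and follows essentially the same approach as the paper: non-negativity via the bound $g_k^i \geq 2^{k-1}$ from Lemma~\ref{lemma:counting-genera}, vanishing on the diagonal via the product annulus, and the triangle inequality via stacking gropes in $S^3 \times I$ and observing additivity of the length function. You give somewhat more explicit detail on symmetry and on well-definedness under concordance (which the paper dispatches via the observation that concordant knots have distance zero, combined with the triangle inequality), but these are expository differences rather than a different route.
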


\begin{proof}
  If two knots are concordant then they cobound an annulus, which is a grope $\Sigma$ with $g_1=0$, and consequently the sum in the equation for $d_{\Sigma}^q(K,J)$ is vacuous.  We note that the function $d_{\Sigma}^q(K,J)$ is always nonnegative, since all the grope attached to the first stage must be symmetric.  It is helpful to consider each hole (i.e.\ dual pair of curves in a symplectic basis) of $\Sigma_{1:1}$ separately, to begin with for $q=1$. For each hole the contribution is $1$ minus the contribution from higher surfaces attached to the two associated dual curves.  On the other hand the highest that each term $1/g^i_k$ can be is $1/2^{k-1}$.  Thus $$\sum_{k=2}^{n_i+1} \frac{1}{g_k^i} \leq \sum_{k=2}^{n_i+1} \frac{1}{2^{k-1}} \leq \sum_{k=1}^{\infty} \frac{1}{2^k} = 1.$$
As a consequence, the infimum of $\|\Sigma\|^1$ over all possible grope concordances between concordant knots $K$ and $J$ is indeed zero.
When $q>1$, the entire contribution of a hole is multiplied by a positive constant $1/q^{n_i}$.  Thus the infimum of $\|\Sigma\|^q$, over all $\Sigma$ bounded by $K$ and $J$, is also zero for any $q >1$.
This shows that the distance function $d_G^q$ is well-defined and that it is a pseudo-metric.

Symmetry is apparent from the definition. The triangle inequality is also straightforward.  To see this suppose $K_0$ and $K_1$ are grope concordant via $\Sigma_{01}$ while $K_1$ and $K_2$ are grope concordant via $\Sigma_{12}$.  Glue the two gropes together along $K_1$ to yield a new grope $\Sigma_{02} = \Sigma_{01} \cup_{K_1} \Sigma_{12}$ in $S^3 \times I$ whose distance function satisfies $\|\Sigma_{02}\|^q = \|\Sigma_{01}\|^q + \|\Sigma_{12}\|^q$.  By considering the infima over all possible gropes we obtain triangle inequality for $d^q$.
\end{proof}

\begin{remark}
It is possible for the $q=1$ distance function to vanish for knots which cobound the simplest possible infinite grope, i.e.\ every surface is genus one.  It is an open question whether such knots are necessarily concordant.
\end{remark}

We call $d^q \colon \C \to \R^{\geq 0}$ the $q$-grope pseudo-metric.  We can also define a pseudo-norm which gives rise to the pseudo-metric.

\begin{definition}\label{defn:grope-norm}
We say that an oriented knot $K$ in $S^3$ \emph{bounds a branch-symmetric grope $\Sigma$} if $\Sigma$ is a branch-symmetric grope, and we have a framed smooth embedding $\iota \colon \Sigma \hookrightarrow B^4$ where $\iota^{-1}(\partial B^4) = \partial \Sigma_{1:1}$ and
$\iota_{| {\iota^{-1}(\partial B^4)}} = K.$
 Define $\|\Sigma\|^q$ as the quantity on the right hand side of Equation~\ref{eq:knot_grope_formula} let
\[ \|K\|^q := \inf_{\Sigma}\{ \|\Sigma\|^q \, | \,  K \text{ bounds a branch-symmetric grope } \Sigma \}.\]
Equivalently, we could define the grope $q$-norm of a knot $K$ to be $\|K\|^q := d^q(K,U)$.
\end{definition}

\noindent We note that, for knots $K$ and $J$,
$$ \|K \# -J\|^q = d^q(K,J).$$

\begin{remark}\label{remark:top-slice-knots}
  While a topologically slice knot bounds an arbitrary height smoothly embedded grope, the genera of the higher stages grow exponentially with the height in the known proofs~\cite[Proposition~2.2.4]{quinn1982-endsofmaps},~\cite[Theorem~5.2]{gompf2005-steinsurfaces}.
  We do not know how to prove, and indeed doubt that it is true, that a topologically slice knot bounds the simplest infinite grope.  Thus it is possible that with the $q=1$ pseudo-metric, topologically slice knots are positive distance from the unknot, meaning that $d^q$ is a metric instead of a pseudo-metric.  On the other hand topologically slice knots have zero distance from the unknot for $q>1$, since they bound gropes with arbitrarily large height, all of which have the same first stage genus.  As mentioned in the introduction, we have also been investigating metrics which, by taking signs into account, connect much more closely with the known obstructions for topologically slice knots to be smoothly slice.
  \end{remark}

\begin{example}
The trefoil and the figure eight knot are not algebraically concordant, therefore they do not cobound any grope of height 3.  We will see shortly that this implies that the distance $d^q(3_1,4_1) \geq 1/2q$.
The purpose of this example is to explain how to compute an upper bound for the distance between the trefoil $3_1$ and the figure eight knot $4_1$.  First, the connect sum is genus two, so~$2$ is certainly an upper bound.
A standard Seifert surface for $K:= 3_1 \# -4_1$ has an unknotted curve of self-linking zero on it, so the slice genus of $K$ is one, and so in fact $d^q(3_1,4_1) \leq 1$.
However the sum $3_1 \# 4_1$ is also Arf invariant zero, and so therefore bounds a grope of height two, as we shall see.  We are not able to extend the genus one surface just constructed to a grope of height two.  Instead we will find a grope of height two based on a genus two first stage surface, by finding second stage surfaces to attach to the pushed in Seifert surface $F$.  For suitably high values of $q$, this will reduce our upper bound.

 A Seifert form for $K$ is given by
 \[A= \begin{pmatrix}
   1 & 0 & 0 &0 \\ 1 & 1 & 0 & 0 \\ 0 & 0 & -1 & 0 \\ 0 & 0 & 1 & 1
 \end{pmatrix} \]
We can change basis using
\[B= \begin{pmatrix}
  -2 & 1 & 1 & -1 \\
  1 & 0 & 1 & 1 \\
  3 & -1 & 0 & 1 \\
  -1 & 1 & 1 & 0
\end{pmatrix}\]
so that
\[B^T A B = \begin{pmatrix}
  -8 & 2 & -1 & -2 \\ 3 & 0 & 2 & 1 \\ -1 & 2 & 4 & 0 \\ -2 & 1 & 1 & 0
\end{pmatrix}\]
Now all diagonal entries have even self intersections.  The corresponding curves bound on $F$ bound immersed disks further into the 4-ball than the Seifert surface~$F$.  We will construct these disks and then improve them into framed embedded surfaces.

Draw a link (with intersections between dual curves) on the Seifert surface for $3_1 \# 4_1$ representing the curves in the new basis.  Push them off the Seifert surface so that they become disjoint.  The self-linking numbers correspond to the difference between the framing of the surfaces' normal bundles and the desired framing in order to have a framed grope.
Since the diagonal entries of $B^TAB$ are even, the failure of second stage surfaces to be correctly framed will be rectified by adding local cusps, which change the framing on the boundary by $\pm 2$.  The resulting self intersections will then be resolved into additional genus.

Find a null-homotopy of the link components, ignoring the Seifert surface, recording each time one component crosses another.  This determines immersed caps for the Seifert surface, further into the 4-ball.  Add cusps to fix the framings as above.  Convert all self-intersections into genus in the standard way, adding a Seifert surface for the Hopf link, a twisted annulus instead of a neighborhood of the intersection point. We then remove the intersections between different surfaces, following \cite[Theorem~8.13]{COT}.  There is a linking torus $T_{\alpha_i}$ in a neighborhood of each basis curve $\alpha_i \subset F$, the boundary of a regular neighborhood $\partial (\cl(\nu \alpha_i))$, which is disjoint from the surface attached to $\alpha_i$, is disjoint from $F$, but intersects the surface $\Sigma_{\beta_i}$ attached to $\beta_i$, where $\beta_i$ is the basis curve on $F$ (in the new basis) that is symplectically dual to $\alpha_i$.  Tube each intersection of another surface into (a parallel copy of) $T_{\alpha_i}$.  This removes the intersection at the cost of increasing the genus.  Repeat this operation to obtain disjointly embedded framed second stage surfaces.

It thus remains to determine upper bounds for the second stage surface genera.  If we show the sum of the genera to be at most $(k_1,k_2)$ for the first and second symplectic basis pair respectively, then $$\|K\|^q \leq \frac{1}{q} \Big(2-\frac{1}{k_1} - \frac{1}{k_2} \Big).$$
We found a height 2 grope with $k_1=16$, $k_2=4$, which gives an upper bound of $27/16q$. For $q$ sufficiently high, this upper bound improves on the genus one surface realizing the slice genus constructed at the beginning of this example.
Note that for a better upper bound, if a certain amount of second stage genus seems inevitable, one should arrange for as much of the genus as possible to appear above one basis pair.
\end{example}

While the height two grope constructed above starts with a pushed in Seifert surface, we remark that all known constructions of higher gropes do not proceed in this way.  Instead they construct high gropes at the expense of increasing first stage genus.  If this difficulty could be circumvented, we could prove the smooth case of Theorem~\ref{cor:notdiscrete} with $q=1$.

We can extend our metrics to links which have \emph{all pairwise linking numbers vanishing}.  We say that two ordered, oriented links $K$ and $J$ \emph{cobound a branch-symmetric grope $\Sigma$} if there
is a framed smooth embedding of $\iota \colon \Sigma = \sqcup_j \Sigma_j  \hookrightarrow S^3 \times I$
where $\Sigma_j$ is  a branch-symmetric grope for $1 \leq j \leq m$, with
$\iota^{-1}(\partial (S^3 \times I)) = \partial \Sigma_{1:1}$, $\iota_{| {\iota^{-1}(S^3 \times \{0\})}} = K$ and $\iota_{| {\iota^{-1}(S^3 \times \{1\})}} = -J$.  We also say that $K$ and $J$ are \emph{grope concordant via $\Sigma$}.
Then define
$$\|\Sigma\|^q := \sum_{j=1}^m\sum_{i=1}^{g_1(\Sigma_j)}\frac{1}{q^{n_{ji}(\Sigma)}} \bigg( 1- \sum_{k=2}^{n_{ji}(\Sigma)+1} \frac{1}{g_k^{ji}(\Sigma)}\bigg)$$
where $g_k^{ji}(\Sigma) := g^{i}_k(\Sigma_j)$ and $n_{ji}(\Sigma)= n_i(\Sigma_j)$.  Now we can define
\[d^q(K,J) = \inf_{\Sigma}\{\|\Sigma\|^q \, |\, K\text{ and }J \text{ cobound a branch-symmetric grope } \Sigma \text{ in } S^3 \times I\}.\]
Similarly we can define the notion of an $m$-component link $L$, with vanishing pairwise linking numbers, \emph{bounding a branch-symmetric grope}, just like we did for knots, and use this to define the $q$-norm $\|L\|^q$ of the link~$L$.   It is easy to see that $\|L\|^q = d^q(L,U)$, where $U$ is the $m$-component unlink.

Recall that an ordered, oriented, $m$-component link $L$ \emph{bounds a symmetric grope $G$ of height $n$} if $G = G^1 \sqcup \dots \sqcup G^m$ is a disjoint union of height~$n$ symmetric gropes~$G^i$ for $i=1,\dots, m$, and we have a framed smooth embedding $\iota\colon G \hookrightarrow B^4$ where $\iota^{-1}(\partial B^4) = \partial G$ and $\iota_{| \partial G_i} = L_i$ for all~$i$.  Here $\partial G_i$ is the boundary of the first stage surface of $G_i$ and $\partial G = \sqcup_i \partial G_i$.
In this case, we say that $L \in \mathcal{G}_{n}$.

\begin{proposition}\label{prop:boundedbelow} Let $L$ be a link and $n\geq 2$.   If $L$ bounds a branch-symmetric grope $\Sigma$ that has a branch of length at most $n-1$ then $\|\Sigma\|^q \geq \frac{1}{q^{n-2}2^{(n-2)}}$.
Thus, if $L\notin \mathcal{G}_{n}$, then $\|L\|^q\geq \frac{1}{q^{n-2}2^{(n-2)}}$.
\end{proposition}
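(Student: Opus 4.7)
The plan is to exploit the fact that $\|\Sigma\|^q$ decomposes as a sum over the branches of $\Sigma$ of individually nonnegative contributions, so that $\|\Sigma\|^q$ is bounded below by the contribution from any single branch.  Concretely, the contribution from a branch of length $n_i$ equals
\[
\frac{1}{q^{n_i}}\left(1 - \sum_{k=2}^{n_i+1} \frac{1}{g_k^i}\right),
\]
and the first step is to bound its parenthesized factor from below using Lemma~\ref{lemma:counting-genera}.  The estimate $g_k^i\geq 2^{k-1}$ combined with the telescoping identity
\[
\sum_{k=2}^{n_i+1} \frac{1}{2^{k-1}} \;=\; 1 - \frac{1}{2^{n_i}}
\]
shows that the parenthesized factor is at least $1/2^{n_i}$, giving an overall per-branch lower bound of $1/(2q)^{n_i}$.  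Because $q\geq 1$, this quantity is decreasing in $n_i$, so specializing to the short branch furnished by the hypothesis produces the claimed lower bound $\frac{1}{q^{n-2}\,2^{n-2}}$ for $\|\Sigma\|^q$.

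For the second sentence I would argue by contrapositive.  If $L$ bounds a branch-symmetric grope $\Sigma$ in which every branch is long enough to evade the hypothesis of the first part, then each attached symmetric grope still contains enough stages to be truncated to height $n-1$; assembling these truncations with $\Sigma_{1:1}$ produces a genuine height-$n$ symmetric grope bounded by $L$, contradicting $L\notin\mathcal{G}_n$.  Hence every branch-symmetric grope $\Sigma$ bounded by $L$ must contain a branch short enough for the first part to apply, and passing to the infimum over all such $\Sigma$ yields the stated lower bound on $\|L\|^q$.

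The substantive content is really the simple geometric-series estimate supplied by Lemma~\ref{lemma:counting-genera}, and the principal (if minor) obstacle is bookkeeping: keeping straight the off-by-one conventions that relate the height of a symmetric grope, the length of a branch, the stage index $k=2,\dots,n_i+1$, and the exponent on $q$ in the norm formula.  Once these are aligned, each piece of the argument is essentially a one-line inequality.
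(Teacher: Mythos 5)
Your approach matches the paper's exactly: decompose $\|\Sigma\|^q$ branch-by-branch, apply Lemma~\ref{lemma:counting-genera} ($g_k^i \geq 2^{k-1}$) together with the geometric series $\sum_{k=2}^{n_i+1} 2^{-(k-1)} = 1 - 2^{-n_i}$ to bound the $i$th branch's contribution below by $(2q)^{-n_i}$, isolate the short branch, and then for the second sentence observe that $L \notin \mathcal{G}_n$ forces any branch-symmetric grope bounded by $L$ to have a short branch before passing to the infimum.

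However, the off-by-one you wave at under ``bookkeeping'' is a genuine discrepancy and deserves to be resolved rather than deferred. Taken literally, the first sentence hypothesizes a branch of length $n_i \leq n-1$, and your per-branch bound $(2q)^{-n_i}$ then yields only $\|\Sigma\|^q \geq (2q)^{-(n-1)}$, \emph{not} the claimed $(2q)^{-(n-2)}$. The paper's own proof silently writes ``so $n_{11}\leq n-2$,'' which is what the displayed bound actually requires, and it is also exactly what the second sentence produces: if every branch of $\Sigma$ had length $\geq n-1$, then truncating each attached symmetric grope to height $n-1$ and assembling with $\Sigma_{1:1}$ gives a height-$n$ symmetric grope for $L$, so $L\in\mathcal{G}_n$; hence $L\notin\mathcal{G}_n$ forces a branch of length $\leq n-2$, not merely $\leq n-1$. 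So the hypothesis of the first sentence should read ``at most $n-2$'' (a typo in the paper's statement), after which your argument closes cleanly and your contrapositive for the second sentence dovetails with it.
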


\begin{proof}
   Let $\Sigma=(\Sigma_1,\dots,\Sigma_m)$ be a branch-symmetric grope bounding $L$ with a branch that has length at most $n-1$.  Without loss of generality assume that it is the first branch of $\Sigma_1$, so $n_{11}\leq n-2$. By Lemma~\ref{lemma:counting-genera}, $g_k^{ji}\geq 2^{k-1}$.  Therefore
\begin{align*}
  \|\Sigma\|^q\geq \|\Sigma_1\|^q &=\sum_{i=1}^{g_1(\Sigma_1)}\frac{1}{q^{n_{11}}} \left( 1- \sum_{k=2}^{n_{1i}+1} \frac{1}{g_k^{1i}}\right)\geq \frac{1}{q^{n_{11}}} \left( 1- \sum_{k=2}^{n_{11}+1} \frac{1}{2^{k-1}}\right)\\
&=\frac{1}{q^{n_{11}}2^{n_{11}}}\geq \frac{1}{q^{n-2}2^{(n-2)}} = \frac{1}{(2q)^{n-2}}.
\end{align*}
Thus $\|L\|^q\geq \frac{1}{q^{n-2}2^{(n-2)}}$.
\end{proof}

Obstructions to a knot or link being $(n+2)$-solvable are obstructions to the knot or link bounding a grope of height $n$, so the preceding proposition translates many results from the literature on the solvable filtration into statements about the distance between knots in our grope metric.
For the convenience of the reader we recall the definition of $n$-solvability for knots, originating from \cite{COT}, and reformulated as given below in~\cite[Definition~2.3]{CHL5}.

\begin{definition}\label{defn:n-solvable}
 We say that a knot $K$ is $(n)$-solvable if the zero surgery manifold $M_K$ bounds a compact oriented 4-manifold $W$ with the inclusion induced map $H_i(M_K;\Z) \to H_i(W;\Z)$ an isomorphism for $i=0,1$, and such that $H_2(W;\Z)$ has a basis consisting of $2k$ embedded, connected, compact, oriented surfaces $L_1,\dots ,L_k,D_1,\dots,D_k$ with trivial normal bundles satisfying:
\begin{enumerate}[(i)]
\item    $\pi_1(L_i) \subset \pi_1(W)^{(n)}$ and $\pi_1(D_j) \subset \pi_1(W)^{(n)}$ for all $i,j=1,\dots,k$;
\item the geometric intersection numbers are $L_i \cdot L_j = 0 = D_i\cdot D_j$ and $L_i \cdot D_j = \delta_{ij}$ for all $i,j =1,\dots,k$.
\end{enumerate}

The subgroup of $\C$ of $(n)$-solvable knots is denoted $\mathcal{F}_{(n)}$.  Such a $4$-manifold $W$ is called an $n$-solution.
It is not too hard to see that an $(n)$-solvable knot in the sense above is $(n)$-solvable in the sense of~\cite{COT}.

\end{definition}

\noindent The following was proved in \cite{COT}.

\begin{theorem}[Theorem~8.11 of \cite{COT}.]\label{thm:height-n-plus-two-grope-implies-n-solvable}
  For all $n \geq 0$, \[ \mathcal{G}_{n+2} \subseteq \mathcal{F}_{(n)}.\]
\end{theorem}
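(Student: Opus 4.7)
Starting from a link $L$ that bounds a height $n+2$ symmetric grope $G=\bigsqcup_j G^j$ in $B^4$, I would construct an explicit $(n)$-solution. Let $W:=B^4\cup_L h^2_1\cup\cdots\cup h^2_m$ be obtained by attaching a $0$-framed $2$-handle along each component of $L$, so that $\partial W=M_L$. A direct Mayer--Vietoris computation from $B^4\cup(\text{2-handles})$ gives $H_0(W;\Z)=\Z$, $H_1(W;\Z)\cong\Z^m$ generated by the meridians of $L$, matching $H_1(M_L;\Z)$ under the inclusion, and $H_2(W;\Z)$ free abelian of rank $2N$ where $N:=\sum_j g_1(G^j)$.

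A canonical generating set for $H_2(W;\Z)$ comes from capping the first stage surfaces $G^j_{1:1}$ with the cores of the attached $2$-handles, producing closed, pairwise disjoint, framed-embedded surfaces $F^j$ of genus $g_1(G^j)$. In this basis the intersection form is a direct sum of hyperbolic forms, with symplectic basis curves $\{\alpha^j_{2i-1},\alpha^j_{2i}\}$ realised geometrically on each $F^j$. However, these curves are not yet commutators in $\pi_1(W)$, so the $F^j$ do not directly exhibit $(n)$-solvability. The idea is to compress them iteratively using the higher stages of $G$: by the symmetric grope hypothesis, each $\alpha^j_\ell$ bounds a second stage surface $\Sigma^{j,\ell}_{2:2}$ in $B^4$, which is itself the first stage of a height $n+1$ symmetric subgrope.

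At step $k$ of the iteration ($k=1,\ldots,n$), I replace each hyperbolic pair $(L^j_i,D^j_i)$ of the previous basis by performing an ambient surgery on a regular neighbourhood of $\alpha^j_{2i-1}$ and $\alpha^j_{2i}$, tubing in two parallel copies of the corresponding $(k{+}1)$-stage surfaces. Since the grope has height $n+2$, all these stages are available. Each such compression is a change of basis of $H_2(W;\Z)$ by a triangular integer matrix with unit diagonal, preserves the framed embedding (because the grope embedding is framed), and pushes the symplectic basis of the new surfaces one level deeper in the derived series of $\pi_1(W)$. Because the construction is symmetric, the intersection numbers $L^j_i\cdot D^{j'}_{i'}=\delta^{jj'}_{ii'}$ and the vanishing of all other intersection numbers are preserved at every step. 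After $n$ iterations, one obtains embedded framed surfaces $\widetilde{L}^j_i,\widetilde{D}^j_i$ forming a basis of $H_2(W;\Z)$ with the required hyperbolic intersection pattern, and $\pi_1(\widetilde{L}^j_i),\pi_1(\widetilde{D}^j_i)\hookrightarrow\pi_1(W)^{(n)}$. This verifies all clauses of Definition~\ref{defn:n-solvable} and shows $L\in\mathcal F_{(n)}$.

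\textbf{Main obstacle.} The delicate point is to carry out $n$ rounds of compression while simultaneously (i) preserving the hyperbolic intersection form, (ii) keeping all surfaces disjointly and framed-embedded, and (iii) lowering the image of $\pi_1$ by one derived subgroup at each stage. The symmetric grope hypothesis — that dual symplectic basis curves $\alpha_{2i-1},\alpha_{2i}$ have grope extensions of \emph{equal} height — is essential here: compressing only one element of a hyperbolic pair using a capping surface of only one side would generically introduce unwanted intersections when one later compresses the dual curve, breaking condition (i). The framed embedding of $G$ furnishes trivial normal bundles throughout, handling (ii), and matching the symplectic bases at each stage with the boundary basis curves of the next stage provides the derived series depth in (iii).
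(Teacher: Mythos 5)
Your choice of $4$-manifold is the fatal flaw. Attaching $0$-framed $2$-handles to $B^4$ along $L$ produces a simply connected manifold: $\pi_1(B^4)=0$, and $2$-handles can only kill, never create, $\pi_1$. Consequently $H_1(W;\Z)=0$, whereas $H_1(M_L;\Z)\cong\Z^m$, so the inclusion $H_1(M_L)\to H_1(W)$ is the zero map, not an isomorphism, and $W$ already fails the first clause of Definition~\ref{defn:n-solvable}. Your homology computation is also incorrect: the handle chain complex has $C_2\cong\Z^m$ and $C_1=0$, so $H_2(W;\Z)\cong\Z^m$ (one generator per handle, not $\Z^{2N}$); each capped first-stage surface $F^j$ represents a \emph{single} class $[F^j]$, and the symplectic basis curves $\alpha^j_\ell\subset F^j$ are $1$-dimensional, so they do not contribute to $H_2(W)$. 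With $0$-framings and vanishing linking numbers the intersection form on $H_2(W)$ is identically zero, so there is no hyperbolic basis to compress. Finally, since $\pi_1(W)=0$, the requirement $\pi_1(L_i)\subset\pi_1(W)^{(n)}$ would force the $L_i$ to be simply connected; no amount of compression can make a positive-genus surface satisfy this.

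The argument in~\cite[Theorem~8.11]{COT}, as recalled in the proofs of Theorems~\ref{thm:refinedslicegenusbounds} and~\ref{theorem:rho-invariant-lower-bound} of this paper, builds a different $W$. One surgers $B^4$ along framed push-offs of the symplectic basis curves $\alpha_i$ of the first-stage surface $\Sigma_{1:1}$, introducing $S^2\times S^2$ or $S^2\wt{\times}S^2$ summands; then performs \emph{symmetric surgery} on $\Sigma_{1:1}$ using the resulting surgery disks to produce a slice disk $\Delta$ for $L$ inside the surgered manifold $V$; and sets $W:=V\setminus\nu\Delta$. Because $\Delta$ is null-homologous, $H_1(W)\cong\Z^m$ is generated by the meridians of $L$, fixing the first problem above. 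The hyperbolic basis of $H_2(W;\Z)$ consists of the surgery spheres $S_i$ (tubed into parallel copies of the $B_i$ to kill the intersections that arise when pushing off the contraction) together with the surfaces $B_i$ obtained by capping the second-stage grope surfaces with the surgery disks. It is the $B_i$ that carry stages $3$ through $n+2$ of the grope, which is what yields $\pi_1(B_i)\subset\pi_1(W)^{(n)}$ and $\pi_1(S_i)\subset\pi_1(W)^{(n)}$. Your intuition that the symmetric grope structure pushes $\pi_1$ deeper into the derived series is exactly right; the missing idea is that the ambient $4$-manifold must be created by surgery along the first-stage basis curves, so that the slice-disk complement has the correct fundamental group, rather than by $2$-handle attachment to $B^4$.
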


Next we state our main theorem, and give the proof modulo Proposition~\ref{ex:specificdecreasing}, which actually constructs the sequence of knots with decreasing but nonzero norms.

\begin{theorem}\label{cor:notdiscrete}
 For any $q>1$ there exist uncountably many sequences of knots $\{K_i\}_{i \geq 0}$ such that $d^q(K_i,U) > 0$ for all $i$ but $d^q(K_i,U) \to 0$ as $i \to \infty$.
In particular, if $q>1$ then neither of the topologies on $\C$ and $\C^{top}$, induced by $d^q$ and $d^q_{top}$ respectively, are discrete.
\end{theorem}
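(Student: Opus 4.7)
The plan is to combine the contraction property for winding-number-zero satellite operators (Proposition \ref{prop:contraction}) with the lower bound arising from the grope/solvable filtration (Proposition \ref{prop:boundedbelow} together with Theorem \ref{thm:height-n-plus-two-grope-implies-n-solvable}). The upper bound on $d^q(K_i,U)$ will come from iterating a contraction, while the lower bound will come from certifying that each iterated satellite still escapes some level of the solvable filtration.

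For the upper bound $d^q(K_i,U)\to 0$, I would fix a winding-number-zero pattern $P$ whose geometric winding number $w_0$ satisfies $w_0<q$, so that by Proposition \ref{prop:contraction} the satellite map $P\colon (\C,d^q)\to(\C,d^q)$ is a contraction with some constant $\lambda=\lambda(q)<1$. Taking a seed knot $J$ (for example the trefoil $3_1$) and setting $K_i:=P^{\,i}(J)$, iterating the contraction gives
$$\|K_i\|^q = d^q(K_i,U) \leq \lambda^{\,i}\,\|J\|^q \longrightarrow 0.$$

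For the pointwise lower bound $d^q(K_i,U)>0$, I would choose $P$ and $J$ in the style of the iterated-infection constructions of Cochran-Orr-Teichner and Cochran-Harvey-Leidy, with the infection curves of $P$ lying in a sufficiently deep term of the derived series of the pattern's knot group. The $L^{(2)}$ $\rho$-invariant arguments flagged for Section \ref{section:higher-order-rho-invariants} then certify that $K_i\in\mathcal{F}_{(n_i)}\setminus\mathcal{F}_{(n_i+1)}$ for some sequence $n_i\to\infty$. By the contrapositive of Theorem \ref{thm:height-n-plus-two-grope-implies-n-solvable}, this forces $K_i\notin\mathcal{G}_{n_i+3}$, and hence Proposition \ref{prop:boundedbelow} yields
$$\|K_i\|^q \geq \frac{1}{(2q)^{\,n_i+1}} > 0$$
for every $i$, as required.

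Uncountably many such sequences arise because at each stage of the iterated infection one can insert any knot from a continuum-size family (parametrized, say, by the real-valued $\rho$-invariants of an auxiliary knot), and different choices produce distinct concordance classes that are separated by $\rho$-invariants. A binary tree of choices then gives $2^{\aleph_0}$ distinct sequences $\{K_i\}$, each satisfying the two conditions above. The second sentence of the theorem follows immediately: any one such sequence witnesses the non-discreteness of $d^q$ on $\C$, and because each $K_i$ is not even smoothly slice (indeed not $(n_i+1)$-solvable), at least one such sequence stays positive in $d^q_{top}$ as well, provided the non-triviality invariants used are concordance invariants in the topological category.

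The main obstacle is the lower bound: one must verify that the iterated construction genuinely escapes all sufficiently large levels of the solvable filtration, for each $i$ separately. This is exactly the content of the promised Proposition \ref{ex:specificdecreasing}, and its proof requires the $\rho$-invariant machinery of Section \ref{section:higher-order-rho-invariants}. Balancing this against the upper bound is delicate, since one must ensure that the same pattern $P$ and the same parameter $q$ simultaneously make Proposition \ref{prop:contraction} apply (giving the geometric decay) and leave room for the $\rho$-invariant obstructions to remain non-vanishing at level $n_i$ (giving the positivity).
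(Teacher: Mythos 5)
Your overall plan (iterated winding-number-zero satellites for the upper bound, $\rho$-invariant/solvable-filtration obstructions for the lower bound, and variation at each stage for uncountability) is the right circle of ideas and matches the paper's construction in Proposition~\ref{ex:specificdecreasing}. But there is a genuine gap in the upper-bound argument: Proposition~\ref{prop:contraction} only gives a contraction for $q > N$, where $N$ is bounded below by the geometric winding number. A nontrivial pattern with \emph{algebraic} winding number zero necessarily has \emph{geometric} winding number at least $2$, since the intersections with the infection disk come in cancelling pairs. So your argument only shows $\|K_i\|^q \to 0$ for $q > 2$ (and even then only for $q$ exceeding $\gw(P)$ of the specific pattern you choose), whereas the theorem asserts the conclusion for all $q > 1$. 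You cannot remedy this by shrinking $\gw(P)$, because $\gw(P) \geq 2$ is a hard constraint.

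The paper explicitly flags this issue in the proof of Proposition~\ref{ex:specificdecreasing}: the contraction argument already suffices for $q > \gw(R)$, but to reach all $q>1$ one must instead produce, directly, a height $n+2$ symmetric grope for $K_n$ whose first stage has genus $2^n$ and whose higher stages all have genus one. Plugging these genera into the length formula gives $\|K_n\|^q \leq \tfrac{1}{2q^{n+1}}$, which tends to $0$ for every $q>1$ despite the first-stage genus blowing up. Achieving that grope shape is where the real work lies: it uses Proposition~\ref{prop:effectinfections} to track exactly how satellite infection transforms a capped grope, and Lemma~\ref{lem:goodeta} to guarantee that the infection curve $\eta$ bounds a genus-one capped surface (with multiplicity-one caps), so the induction preserves the ``all higher stages genus one'' property. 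Your proposal skips this construction entirely and so does not close the gap between $q > \gw(P)$ and $q > 1$. The lower-bound and uncountability portions of your sketch are essentially correct in spirit and align with what the paper does via robust doubling operators and \cite[Theorem~7.5]{CHL5}.
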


As remarked in the introduction, there is a straightforward proof for the second sentence in the smooth case, due to the fact that our $q>1$ pseudo-metrics on $\C$ are not metrics.

\begin{proof} In Proposition~\ref{ex:specificdecreasing} we will exhibit a family of knots $K_n$ for each $n \geq 0$ that satisfies the following hypotheses.
The knot $K_n$ bounds a symmetric height $n+2$ grope, whose first stage has genus $2^{n}$ and whose higher stages have genus one. Suppose also that $K_n\notin \mathcal{G}_{n+3}$. Then these knots represent distinct concordance classes and
$$
 \frac{1}{q^{(n+1)}2^{(n+1)}}\leq \|K_n\|^q\leq \frac{1}{2q^{n+1}},
 $$
 where the lower bound is from Proposition~\ref{prop:boundedbelow} and the upper bound is by virtue of the hypothesized height $n+2$ grope.
 Therefore, for any $q>1$, $K_n$ converges to the class of the trivial knot since $\|K_n\|^q\rightarrow 0$.  We will also show in the proof of Proposition~\ref{ex:specificdecreasing} how to modify the construction to obtain infinitely many different such sequences of knots.
This completes the proof of Theorem~\ref{cor:notdiscrete} modulo the rather large caveat of Proposition~\ref{ex:specificdecreasing}.
\end{proof}

\section{Relations to knot signatures}\label{section:relations-to-knot-signatures}

\begin{theorem}\label{thm:refinedslicegenusbounds}
Suppose that $K$ bounds a branch-symmetric grope $\Sigma$ in $B^4$. Let $g$ be the number of branches of $\Sigma$ for which the branch length $n_i$ is less than $2$. Then the Levine-Tristram signature $\sigma_K(z)$ satisfies
\begin{equation}\label{eq:refinedslicegenusbound}
|\sigma_K(z)|\leq 2g,
\end{equation}
 for any complex number $z$ of norm one that is not a root of the Alexander polynomial $\Delta_K$.
\end{theorem}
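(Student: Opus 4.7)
My plan is to reduce to the classical bound $|\sigma_K(z)|\leq 2\,\mathrm{genus}(F)$ of Murasugi--Tristram--Gilmer applied to the first-stage surface $F:=\Sigma_{1:1}\subset B^4$, and to use the long branches (those with $n_i\geq 2$) to cancel one unit of genus each from the resulting estimate.

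Regard $F\subset B^4$ as a properly embedded oriented surface of genus $g_1=g+g_h$ with $\partial F=K$, where $g_h$ counts long branches and $g$ counts short branches. The classical inequality already yields $|\sigma_K(z)|\leq 2g_1$, so the theorem is precisely the assertion that each long branch removes $2$ from this bound. For each long branch $i$, both basis curves $\alpha_{2i-1},\alpha_{2i}\subset F$ bound symmetric gropes of height at least $2$ in $B^4\setminus F$, and by the standard correspondence between symmetric-grope height and depth in the derived series of the ambient group (invoked for example in the proof of \cite[Theorem~8.11]{COT}), these curves represent elements of $\pi_1(B^4\setminus F)^{(2)}$.

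The crux is to convert this derived-series information into a $g_h$-dimensional isotropic subspace of the twisted intersection form on a $4$-manifold $V$ built from $F$ whose twisted signature at $z$ equals $\sigma_K(z)$ and whose underlying vector space has rank $2g_1$. Because the one-dimensional representation $\mathbb{C}_z$ factors through the abelianization $\pi_1(B^4\setminus F)\to\mathbb{Z}$, every element of $\pi_1^{(1)}\supset\pi_1^{(2)}$ acts trivially; the long-branch curves together with their attached height-$\geq 2$ gropes therefore determine twisted $2$-cycles in $V$, and geometric disjointness of the gropes in $B^4$ forces the intersection pairing among these cycles to vanish, producing the desired isotropic subspace of dimension $g_h$. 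The standard signature--rank inequality then gives $|\sigma_K(z)|=|\sigma_z(V)|\leq 2g_1-2g_h=2g$.

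The main obstacle is the middle step: one must verify that the long branches really produce $g_h$ genuinely independent isotropic classes in the twisted intersection form, rather than merely produce elements that vanish modulo some derived subgroup. I expect the cleanest route to proceed geometrically via Freedman-style grope surgery, using the height-$\geq 2$ symmetric gropes to replace each pair $(\alpha_{2i-1},\alpha_{2i})$ by an annulus and thereby reduce the genus of $F$ by $1$, combined with the fact that Levine--Tristram signatures are topological concordance invariants so that one may freely pass to the topological category when applying Gilmer's inequality. Alternatively, one could argue algebraically using twisted Poincar\'e duality on the derived cover along the lines developed in \cite{COT}.
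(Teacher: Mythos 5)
Your outline correctly identifies the target (improving the classical bound by $2$ per branch of length $\geq 2$) and the relevant geometric input (curves of long branches lie deep in the derived series of the complement), but the paper's actual proof does not go through Gilmer's inequality plus a genus-reduction step, and the middle step you flag as "the main obstacle" is indeed where your proposal breaks down. The route you suggest for closing the gap—using the height-$\geq 2$ symmetric gropes to "replace each pair $(\alpha_{2i-1},\alpha_{2i})$ by an annulus and thereby reduce the genus of $F$ by $1$," even after passing to the topological category—does not work. A height-$2$ (or any finite-height, uncapped) grope does not contain a topologically embedded disk on its boundary circle; Freedman's technology requires either infinite convergent grope towers or caps plus good fundamental group, neither of which is available in $B^4 \setminus F$. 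Indeed, if one could trade a height-$\geq 2$ grope for a genus reduction of the bounding surface in $B^4$ itself, iterating the move would show that any knot bounding a grope of sufficient height is slice, contradicting the nontriviality of the grope filtration and of Theorem~\ref{thm:height-n-plus-two-grope-implies-n-solvable}.

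What the paper actually does is different: it does not reduce the genus of $F$ in $B^4$ at all. Instead it surgers $B^4$ along push-offs of the $\alpha$-curves—one curve for each short branch, \emph{both} dual curves for each long branch (symmetric surgery)—producing a new simply connected $4$-manifold $V$ that is a punctured connected sum of $S^2\times S^2$'s and $S^2\widetilde{\times}S^2$'s, with a genuine slice disk $\Delta$ for $K$ and with $\sigma(V)=0$ and $\beta_2(V)=2g+4e$. The content of the height-$\geq 2$ hypothesis is not to produce a small isotropic subspace of a twisted form on a surface complement, but to guarantee that the $4e$ surfaces $S_j, B_j$ created by the symmetric surgeries for the long branches satisfy $\pi_1(S_j),\pi_1(B_j)\subset\pi_1(V-\Delta)^{(1)}$, so that they lift to the $d$-fold cyclic cover $\widetilde{V}$ branched over $\Delta$. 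One then invokes the $G$-signature theorem (à la Casson–Gordon) to identify $\sigma_K(\omega^j)$ with the eigenspace signature $\sigma(\widetilde{V},j)$, uses the lifted $\{S_j,B_j\}$ to carve out a $4e$-dimensional hyperbolic (hence signature-zero) subspace of the $j$-th eigenspace, and uses the rank computation $\beta_2(\widetilde{V},j)=\beta_2(V)$ from \cite{Cochran-Harvey-Horn:2013-1} to bound the orthogonal complement by rank $2g$. None of this is recovered by your sketch: in particular, your plan never leaves $B^4$, whereas the genus reduction that powers the improved bound only takes place after the ambient $4$-manifold has been changed. Your "algebraic alternative" pointing at twisted Poincaré duality on derived covers is closer in spirit, but as written it is not an argument, and making it precise would essentially reproduce the paper's surgery-plus-branched-cover construction.
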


\begin{proof}
From the branch-symmetric grope $\Sigma$, we will construct a simply connected $4$-manifold $V$, with $\partial V\cong S^3$, in which $K$ bounds a (null-homologous) slice disk $\Delta$. Recall that, by definition, $\Sigma$ comes equipped with an identification of its tubular neighborhood with the product $\Sigma\times D^2$.  For each branch of $\Sigma$ for which $n_i$ is less than $2$, perform surgery on $B^4$ along a push off $\alpha_i'$ of \textit{one} of the two circles, $\alpha_i$, out of the two dual attaching circles for that branch. For each branch of $\Sigma$ for which $n_i$ is at least $2$, perform surgery on $B^4$ along circles, $\alpha'_i, \beta'_i$, which are push-offs of \textit{both} of the two dual attaching circles $\alpha_i, \beta_i$ for that branch. Here $\alpha'_i, \beta'_i$ are push-offs of $\alpha_i,\beta_i$ respectively, into the second stage surfaces of grope that they bound. The framings for these surgeries are dictated by the product structure on the tubular neighborhoods.

Since any link in the interior of $B^4$ bounds a collection of disjointly embedded disks, the result of such surgeries is a  manifold $V$ that is diffeomorphic to a punctured connected sum of copies of either $S^2\times S^2$ or $S^2\tilde{\times}S^2$. In particular $\sigma(V)=0$ and $H_2(V)$ is free abelian of rank $2g+4e$, where  $e$ is the number of branches of $\Sigma$ for which $n_i$ is at least $2$. Since, after surgery, the corresponding circles bound embedded disks whose interiors are disjoint from $\Sigma$, the slice disk $\Delta$ is essentially  $\Sigma$ surgered ambiently using two copies of each of these disks. It follows that $\Delta$ is null homologous and $H_2(V-\Delta;\Z)\cong H_2(V;\Z)$. In the cases $\alpha'_i, \beta'_i$, the process of forming $\Delta$ is called \textit{symmetric surgery}, and is described in the proof of~\cite[Theorem 8.11,~$h=1$]{COT} (see also \cite[Section~2.3]{FQ}). Moreover, in the proof of \cite[Theorem 8.11]{COT}, a very precise collection of oriented surfaces, $\mathcal{E}=\{S_j, B_j|~ 1\leq j\leq 2e\}$ was described, representing a basis for the second homology of the $2e$ copies of $S^2\times S^2$ created by the surgeries on the $2e$ circles $\alpha'_j, \beta'_j$.
 We also use the procedure of the $h=1.5$  part of the proof, which tubes $S_{i+g}$ twice into parallel copies of the $B_i$, in order to remove intersections between $S_i$ and $S_{i+g}$ that arose from pushing off the contraction.

These surfaces have the following properties. They are disjointly embedded in $V-\Delta$ except that $S_j$ intersects $B_j$ transversely once with positive sign, and they have trivial normal bundles. Moreover the $B_j$ are essentially the capped-off second stage grope surfaces, which, since third stage grope surfaces exist, satisfy  $\pi_1(B_j)\subset \pi_1(V-\Delta)^{(1)}$. The $S_j$ begin as $2$-spheres pushed off the contraction, then half of them are tubed into copies of the $B_j$ to remove intersections created by the push off operation.  All the $S_j$ also satisfy $\pi_1(S_j)\subset \pi_1(V-\Delta)^{(1)}$.  Consequently, not only is the intersection matrix for the corresponding summand of $H_2(V-\Delta;\Z)$ a direct sum of $2e$ hyperbolic matrices, but this matrix even represents the intersection form for $H_2(V-\Delta;\Z[t,t^{-1}])$ (for this summand).

Now we follow the proofs of ~\cite[Proposition 4.1]{Cochran-Harvey-Horn:2013-1} and~\cite[Theorem 3.7]{CLick}. Let $d=p^r$ be a prime power and let $\Sigma_d(K)$ denote the $d$-fold cyclic cover of $S^3$ branched over $K$, which is well known to be a $\Z_p$-homology sphere ~\cite[Lemma 4.2]{CG1}. Since $\Delta$  represents zero in $H_2(V,\partial V)$, $H_1(V-\Delta)\cong\Z$, generated by the meridian. Thus the $d$-fold cyclic cover of $V$ branched over $\Delta$, denoted  $\widetilde{V}$, is defined and has boundary $\Sigma_d(K)$. Since $H_1(V;\Z_p)=0$, it follows from the proof of ~\cite[Lemma 4.2]{CG1} that $H_1(\widetilde{V};\Z_p)=0$. Thus the first and third betti numbers vanish: $\beta_1(\widetilde{V};\Z_p)=0=\beta_3(\widetilde{V};\Z_p)$.

To compute the signature of $\widetilde{V}$ we make $V$ into a closed $4$-manifold and use the $G$-signature theorem. Let $(B^4,F_K)$ be the $4$-ball together with a Seifert surface for $K$ pushed into its interior. Let
$$
(Y,F)=(V,\Delta)\cup (-B^4,-F_K)
$$
be the closed pair, let $\widetilde{W}$ denote the $d$-fold cyclic branched cover of $(B^4,F_K)$, and let $\widetilde{Y}$ be the $d$-fold cyclic branched cover of $(Y,F)$. Note that $\Z_d$ acts on $\widetilde{V}$, $\widetilde{Y}$, and $\widetilde{W}$ with $V$, $Y$ and $B^4$ respectively as quotient. Choose a generator $\tau$ for this action. Let $H_i(\widetilde{Y},j;\mathbb{C})$, $0\leq j<d$, denote the $\exp(\frac{2\pi i j}{d})$-eigenspace for the action of  $\tau_*$ on $H_i(\widetilde{Y};\mathbb{C})$; let $\beta_i(\widetilde{Y},j)$ denote the rank of this eigenspace, and let $\chi(\widetilde{Y},j)$ denote the alternating sum of these ranks (similarly for $\widetilde{V}$ and $\widetilde{W}$). Let $\sigma(\widetilde{Y},j)$ denote the signature of the $\exp(\frac{2\pi i j}{d})$-eigenspace of the isometry $\tau_*$ acting on $H_2(\widetilde{Y};\mathbb{C})$ (similarly for $\widetilde{V}$ and $\widetilde{W}$). By a lemma of Rochlin, using the $G$-signature theorem ~\cite{Rok1}\cite[Lemma 2.1]{CG1}, since $\widetilde{Y}$ is closed and $[F]\cdot[F]=0$,
$$
\sigma(\widetilde{Y},j)=\sigma(Y).
$$
Since $\widetilde{Y}=\widetilde{V}\cup -\widetilde{W}$ glued along the rational homology sphere $\Sigma_d(K)$, this translates to
$$
\sigma(\widetilde{V},j)-\sigma(\widetilde{W},j)=\sigma(V)-\sigma(B^4)=0.
$$
Moreover it is known that $\sigma(\widetilde{W},j)$ is a \emph{$p^r$-signature of $K$}~\cite{Vi1}, \cite[Chapter 12]{Gor1}, implying that
\begin{equation}\label{eq:sigs1}
\sigma_K(\omega^j)=\sigma(\widetilde{W},j)=\sigma(\widetilde{V},j)
\end{equation}
where $\omega=\exp(\frac{2\pi i}{d})$.
Since these roots of unity are dense in the circle, $\sigma_K(z)=\sigma_K(\omega^j)$ for some $r$ and $j$. Hence it suffices to show that
\begin{equation}\label{eq:sigs2}
|\sigma(\widetilde{V},j)|\leq 2g.
\end{equation}

Since $\pi_1(\mathcal{E})\subset \pi_1(V-\Delta)^{(1)}$, the collection $\mathcal{E}$ lifts to a collection
$$
\widetilde{\mathcal{E}}=\{t^k\widetilde{S}_i, t^j\widetilde{B}_i ~|~ 0\leq k,j <d-1, 1\leq i\leq 2e\}
$$
of $4ed$ embedded surfaces  in $\widetilde{V}$. Indeed, their regular neighborhoods lift, so each has self-intersection $0$. Moreover, for different $i$ and $k$ the collections of lifts are disjoint except that  $t^k\widetilde{S}_i$ and $t^k\widetilde{B}_i$ intersect transversely in one point.   Since duals exist, it can easily be seen that $\widetilde{\mathcal{E}}$ is a basis for a $\tau$-invariant subspace  of $H_2(\widetilde{V};\mathbb{C})$ of rank $4ed$ (for more details see the proof of~\cite[Theorem 6.2]{Cochran-Harvey-Horn:2013-1}). Hence we can speak of the $j^{th}$ eigenspace $(\widetilde{\mathcal{E}},j)$. For each fixed $i$, let $\widetilde{\mathcal{S}}_i$ and $\widetilde{\mathcal{B}}_i$ denote the $\tau$-invariant $d$-dimensional subspaces with bases $\tau^k\widetilde{S}_i$ and $\tau^k\widetilde{B}_i$ as $k$ varies. Since the roots of $t^d-1$ are distinct, the $j^{th}$ eigenspaces of $\widetilde{\mathcal{S}}_i$ and $\widetilde{\mathcal{B}}_i$  have dimension one, generated by, say, $s_{ij}$ and $b_{ij}$ where $s_{is}\in \widetilde{\mathcal{S}}_i$. Thus $(\widetilde{\mathcal{E}},j)$ has dimension $4e$ and has $\{s_{ij}|~1\leq i\leq 2e\}$ and $\{b_{ij}|~1\leq i\leq 2e\}$ generating Lagrangian subspaces of rank $2e$. Hence $\sigma(\widetilde{\mathcal{E}},j)=0$.

Finally note that the $j^{th}$-eigenspace of $H_2(\widetilde{V};\mathbb{C})$ decomposes as $(\widetilde{\mathcal{E}},j)\oplus D_j$  for some $D_j$  (the direct sum is orthogonal with respect to the intersection form since the intersection form restricted to  $(\widetilde{\mathcal{E}},j)$ is nonsingular).
Hence the rank of $D_j$ is $\beta_2(\widetilde{V},j)-4e$. But the argument on the top of page 2118 of ~\cite{Cochran-Harvey-Horn:2013-1}, in particular Equation 4.4, establishes that $\beta_2(\widetilde{V},j)=\beta_2(V)$. Since the latter is $4e+2g$, the rank of $D_j$ is $2g$. Thus
$$
|\sigma(\widetilde{V},j)|=|\sigma(D_j)|\leq \text{rank}D_j=2g,
$$
establishing ~\eqref{eq:sigs2} and finishing the proof.
\end{proof}

\begin{corollary}\label{cor:signatureboundsnorm}  If $K$ is a knot and $z$ is a complex number of norm one that is not a root of $\Delta_K$ then
$$
\frac{|\sigma_K(z)|}{4q}\leq \|K\|^q.
$$
Moreover if $\Arf(K) \neq 0$, then
$$
 1 + \frac{\max\{|\sigma_K(z)|-2,0\}}{4q}\leq \|K\|^q.
$$
\end{corollary}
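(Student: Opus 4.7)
The plan is to reduce everything to Theorem~\ref{thm:refinedslicegenusbounds} by carefully accounting for the contribution of each branch of a branch-symmetric grope $\Sigma$ bounded by $K$ to $\|\Sigma\|^q$.  Fix such a $\Sigma$ and partition its first-stage branches by length: let $g_0$ be the number with $n_i=0$, let $g_1'$ be the number with $n_i=1$, and write $g := g_0 + g_1'$ for the total number of branches of length less than $2$.  A branch with $n_i=0$ contributes exactly $1$ to $\|\Sigma\|^q$ by the formula~\eqref{eq:knot_grope_formula}.  A branch with $n_i=1$ contributes $\tfrac{1}{q}\bigl(1 - \tfrac{1}{g_2^i}\bigr) \geq \tfrac{1}{2q}$, using $g_2^i \geq 2$ from Lemma~\ref{lemma:counting-genera}.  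Branches with $n_i \geq 2$ contribute nonnegatively, as already shown in the proof that $d^q$ is a pseudo-metric.  Summing,
\[
\|\Sigma\|^q \;\geq\; g_0 + \frac{g_1'}{2q}.
\]

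For the first inequality, since $q \geq 1$ we have $1 \geq \tfrac{1}{2q}$, so $\|\Sigma\|^q \geq (g_0+g_1')/(2q) = g/(2q)$.  By Theorem~\ref{thm:refinedslicegenusbounds}, $|\sigma_K(z)| \leq 2g$, hence $\|\Sigma\|^q \geq |\sigma_K(z)|/(4q)$.  Taking the infimum over all branch-symmetric gropes bounded by $K$ gives the stated bound.

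For the second inequality, suppose $\Arf(K) \neq 0$.  If every branch of $\Sigma$ had length at least $1$, then $\Sigma$ would exhibit $K$ as bounding a symmetric grope of height at least $2$, contradicting the standard fact that a knot with nonzero Arf invariant does not bound a height~$2$ grope~\cite[Theorem~8.11 and Remark~8.2]{COT}.  Hence $g_0 \geq 1$.  The algebraic identity
\[
g_0 + \frac{g_1'}{2q} \;=\; g_0\!\left(1 - \tfrac{1}{2q}\right) + \frac{g_0 + g_1'}{2q},
\]
together with $g_0 \geq 1$, $1 - 1/(2q) \geq 0$ (since $q \geq 1$), and $g_0 + g_1' \geq |\sigma_K(z)|/2$ from Theorem~\ref{thm:refinedslicegenusbounds}, yields
\[
\|\Sigma\|^q \;\geq\; \left(1 - \tfrac{1}{2q}\right) + \tfrac{|\sigma_K(z)|}{4q} \;=\; 1 + \tfrac{|\sigma_K(z)| - 2}{4q}.
\]
Combining this with the trivial bound $\|\Sigma\|^q \geq g_0 \geq 1$ gives $\|\Sigma\|^q \geq 1 + \max\{|\sigma_K(z)| - 2,\,0\}/(4q)$.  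Taking the infimum over $\Sigma$ finishes the proof.

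The real content of the corollary is carried by Theorem~\ref{thm:refinedslicegenusbounds}; the remaining argument is simply bookkeeping.  The only mildly delicate point is the Arf case, where one must recognize that $\Arf(K) \neq 0$ forces at least one length-$0$ branch, which is precisely what provides the additive $+1$ in the bound.
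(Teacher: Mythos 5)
Your proof is correct and follows essentially the same route as the paper: both decompose $\|\Sigma\|^q$ by branch length, use $g_2^i\geq 2$ to bound the length-one contributions by $1/(2q)$, invoke Theorem~\ref{thm:refinedslicegenusbounds} to get $2g\geq|\sigma_K(z)|$, and use the Arf-invariant obstruction to a height-$2$ grope to force at least one length-$0$ branch. The only difference is a minor rearrangement in the second part (your identity $g_0+g_1'/2q = g_0(1-\tfrac{1}{2q})+g/(2q)$ versus the paper's $b_0+b_1/2q\geq 1+(g-1)/(2q)$), which is just bookkeeping.
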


\begin{proof}  Suppose $K$ bounds a branch-symmetric grope $\Sigma$ in $B^4$. Let $b_0, b_1, e$ denote the number of branches of $\Sigma$ for which $n_i$ is $0,1$, or greater than $1$, respectively.  We order the branches so that $n_i=0$ for $1 \leq i \leq b_0$, $n_i=1$ for $b_0 +1 \leq i \leq b_0 + b_1$, and $n_i\geq 2$ for $i \geq b_0 + b_1 +1$.   Then
$$
\|\Sigma\|^q=b_0+\sum_{i=b_0+1}^{b_0+b_1}\frac{1}{q} \bigg( 1- \frac{1}{g_2^{i}}\bigg)+\sum_{i=b_0+b_1+1}^{g_1(\Sigma)}\frac{1}{q^{n_{i}}} \bigg( 1- \sum_{k=2}^{n_{i}+1} \frac{1}{g_k^{i}}\bigg)\geq b_0+\frac{b_1}{2q}\geq \frac{g(\Sigma)}{2q},
$$
where $g(\Sigma)=b_0+b_1$  
By Theorem~\ref{thm:refinedslicegenusbounds}, $2g(\Sigma) \geq |\sigma_K(z)|$. Thus
$$
\frac{|\sigma_K(z)|}{4q}\leq \|\Sigma\|^q.
$$
Since this true for every $\Sigma$, the first claimed result follows.
For the second part, if $\Arf(K)\neq 0$ then $K$ cannot bound any symmetric grope of height 2, so $b_0 \geq 1$.  Thus
$$b_0 +b_1/2q \geq 1 + \frac{g(\Sigma)-1}{2q} \geq 1 + \frac{\max\{|\sigma_K(z)|-2,0\}}{4q},$$
since $g^i_k \geq 2$,
from which the second claimed result follows.
\end{proof}

\section{Definitions on string links}\label{section:defns-string-links}

\noindent Let $I$ denote the interval $[0,1]$.

\begin{definition}[String links and string link concordance] \label{def:string_link}
  Fix $m$ points, $p_1,\dots,p_m \in D^2$.  An $m$-component \emph{string link} $L$ is an embedding $L \colon \{p_1,\dots,p_m\} \times I \hookrightarrow D^2 \times I$ such that $(p_i,j) \mapsto (p_i,j)$ for $i=1,\dots m$ and $j=0,1$.  An example is depicted in Figure~\ref{figure:string-link}.
  Let $L^i = \im(\{p_i\} \times I)$ be the $i^{th}$ component of $L$.  Denote the exterior of a string link $L$ by $E_L := D^2 \times I \sm \nu L$, where we identify $L$ with its image, and $\nu L$ is a regular neighborhood of $L$.

\begin{figure}[h]
\begin{center}
\begin{tikzpicture}
\node[anchor=south west,inner sep=0] at (0,0){\includegraphics[scale=0.3]{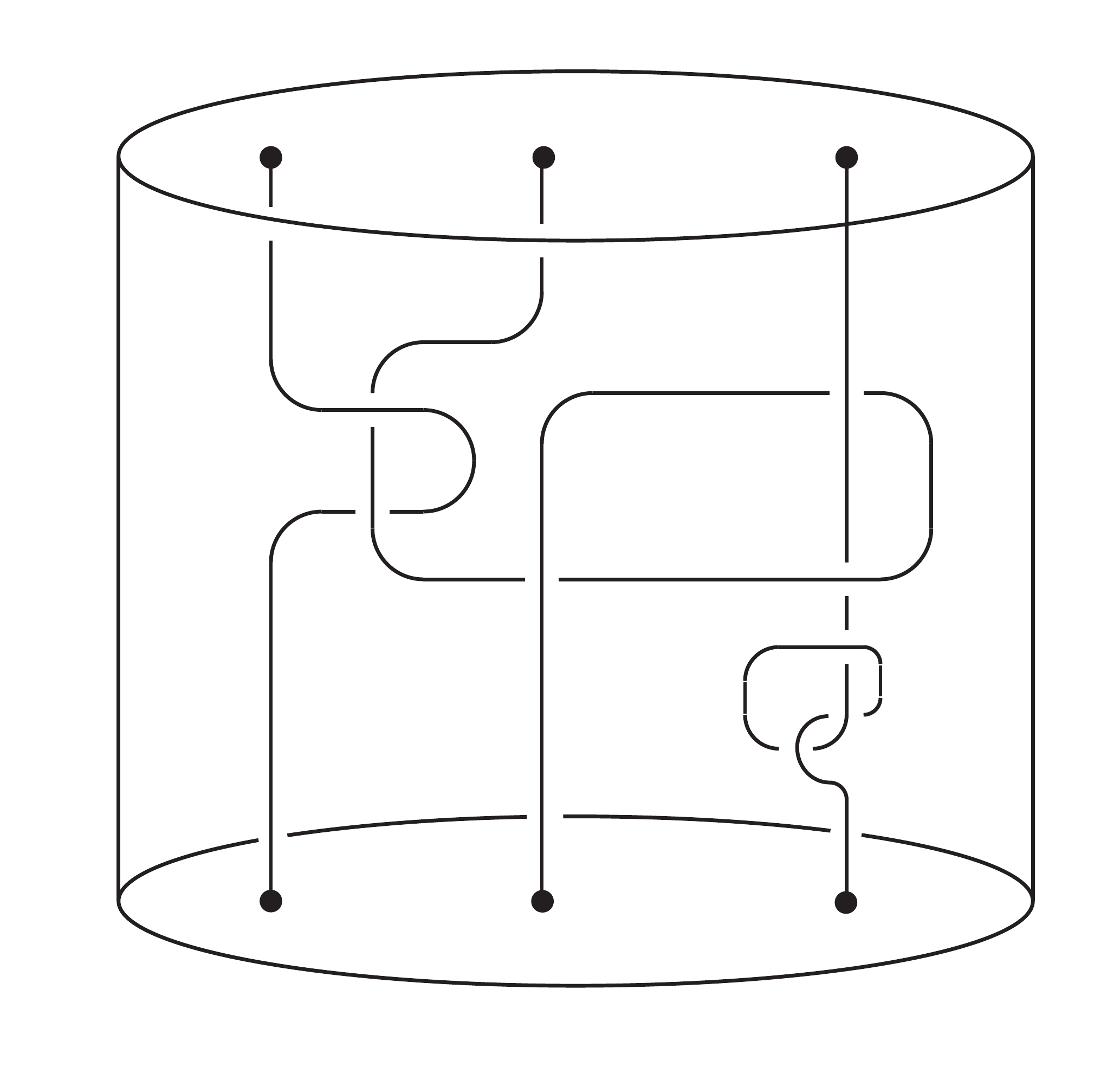}};
\end{tikzpicture}
\end{center}
     \caption{A $3$-component string link.}
    \label{figure:string-link}
\end{figure}

A \emph{concordance} between string links $L_0,L_1$  is an embedding $\{p_1,\dots,p_m\} \times I \times I \subset D^2 \times I \times I$ with $\im(\{p_i\} \times I \times \{j\}) = L_j^i \subset D^2 \times I \times \{j\}$ for $i=1,\dots,m$ and $j=0,1$, and $(p_i,k,x) \mapsto (p_i,k,x)$ for $i=1,\dots,m$, $k=0,1$ and for all $ x \in I$.  We say that $L_0,L_1$ are \emph{string link concordant} or \emph{concordant}.
\end{definition}

Note that a string link $L_0$ is concordant to the trivial string link, $L_T \colon (p_i,x) \mapsto (p_i,x)$ for all $x \in I$, if and only if its closure \[\widehat{L_0} := L_0 \colon \{p_1,\dots,p_m\} \times I \subset \frac{D^2 \times I}{\{(x,0) \sim (x,1) |\,\, x \in D^2\}} \cup_{S^1 \times S^1} S^1 \times D^2 \cong S^3\]
is slice.  We call the string link $L_0$ slice too.

The orientation of $I$ (in $\{p_1,\dots,p_m\} \times I$) determines an orientation of $L$.  We often conflate $L$ and its oriented image, and use $L$ to denote both.

\begin{definition}[Concordance group of string links]
The sum $L_0 \# L_1$ of two string links is given by
\[ \ba{rcl} L_0 \# L_1 \colon \{p_1,\dots,p_m\} \times I &\hookrightarrow  & D^2 \times I \\
(p_i,x) & \mapsto & \begin{cases}
  L_0(p_i,2x) & 0 \leq x \leq 1/2 \\
  L_1(p_i,2x-1) & 1/2 < x \leq 1
\end{cases} \ea\]
The inverse $-L$ of a string link $L$ is given by
\[\ba{rcl} -L \colon \{p_1,\dots,p_m\} \times I &\hookrightarrow  & D^2 \times I \\
(p_i,x) & \mapsto & (p_i,-x)
\ea\]
With these notions of addition and inverse, the set of concordance classes of $m$-component string links form a group $\mathcal{C}^m_{SL}$, the \emph{string link concordance group}.
\end{definition}

Now we give the definition of a string link infection.  One should think about these as operators (functions) $\C^m_{SL} \to \C^\ell$.

\begin{definition}[String link infection]\label{def:stringlinkinfection} An \emph{$m$-multidisk} $\mathbb{D}$ is the standardly oriented unit disk $D^2$ together with a collection of $m$ ordered embedded subdisks $D_1, \dots, D_m$ in $D^2$ with $p_i \in \Int(D_i)$.  Here the $p_i$ are the same points as in Definition~\ref{def:string_link}.

Let $R = R_1 \cup \cdots \cup R_{\ell} \subset S^3$ be a link, let $\mathbb{D}$ be an $m$-multidisk,  and let $\psi: \mathbb{D} \hookrightarrow S^3$ be an  embedding where $R$ intersects $\mathbb{D}$ transversely and $R \cap (\mathbb{D} \smallsetminus (\cup_i \Int(D_i))) = \emptyset$.
The data $(R,\psi)$ is called a \emph{pattern}.  Two patterns are equivalent if they are ambiently isotopic through patterns.  An example of thickened multidisk together with some strands of $R$ is shown in Figure~\ref{figure:multi-disc}.

\begin{figure}[h]
\begin{center}
\begin{tikzpicture}
\node[anchor=south west,inner sep=0] at (0,0){\includegraphics[scale=0.3]{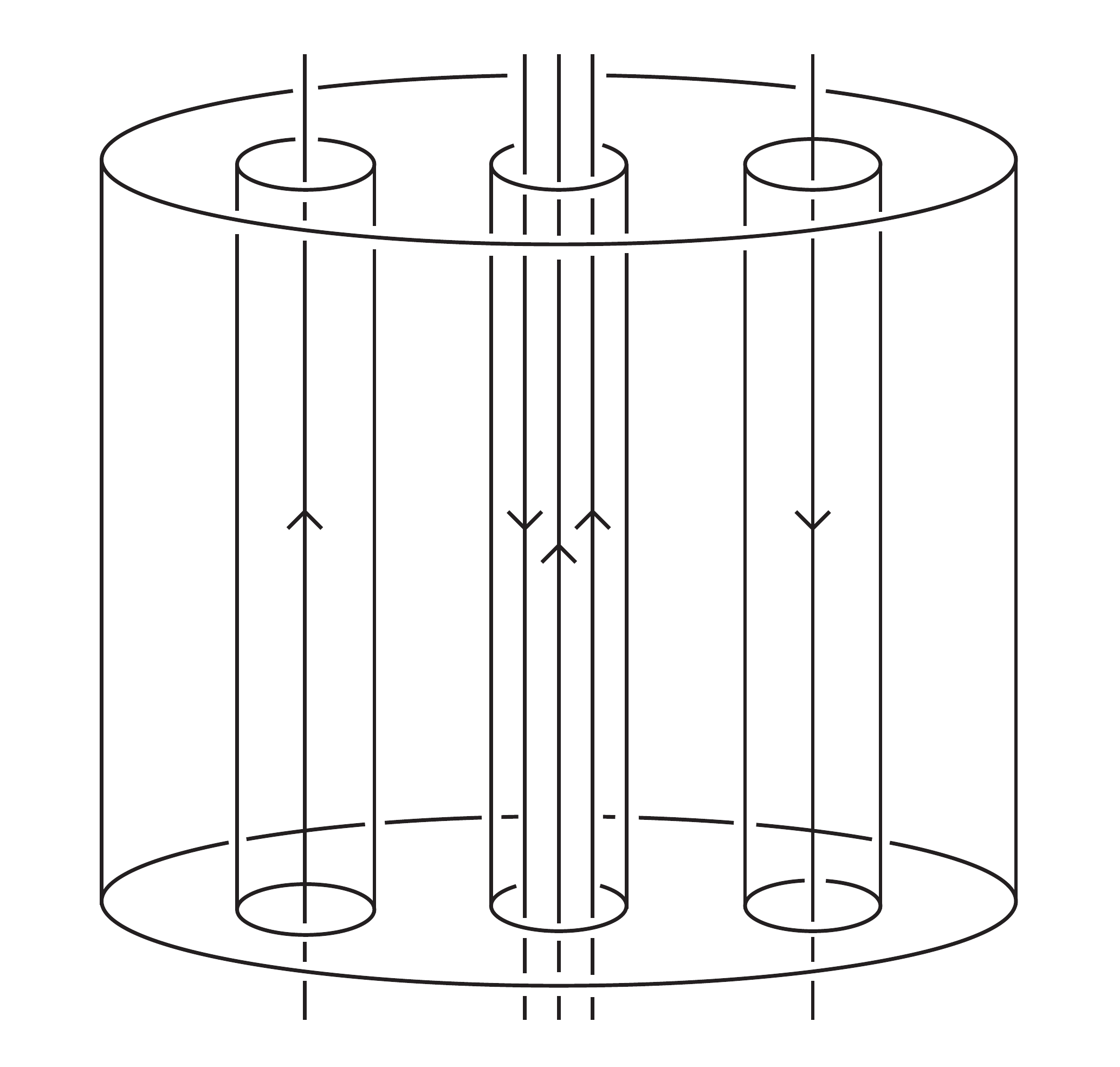}};
\end{tikzpicture}
\end{center}
     \caption{A $3$-multidisk and some strands of $R$ that intersect it.}
    \label{figure:multi-disc}
\end{figure}

This data determines a \emph{satellite operator} $R(-,\psi) \colon \mathcal{C}^m_{SL} \to \mathcal{C}^{\ell}$ from the concordance group of $m$-component string links to the set of concordance classes of $\ell$-component links, as follows.

Set $\mathbb{B} = \mathbb{D} \times I$, oriented using the product orientation, and set $\mathbb{H} = \mathbb{B} \smallsetminus (\cup_i \Int(D_i) \times I) \subset \mathbb{B}$.  Denote the boundary of $\psi(D_i)$ by $\eta_i(R,\psi)$, and let $\eta(R,\psi)$ be the $m$-component unlink $\eta_1(R,\psi) \cup \cdots \cup \eta_m(R,\psi)$.  When it is clear, we will suppress the $(R,\psi)$ and just write $\eta$ or $\eta_i$.  We identify $\mathbb{D}$ with $\mathbb{D} \times \{0\}$ and note that
$\psi$ extends to an orientation preserving embedding $\psi: \mathbb{B} \hookrightarrow S^3$ with  $R \cap \mathbb{H} = \emptyset$.
  Note that the image of $\mathbb{H}$ can be identified with the exterior of the trivial string link $T$, and in this identification $\eta_i$ is the $i^{th}$ meridian of $T$.  Given an $m$-component string link $L$, remove the image of $\mathbb{H}$ from $S^3$, and replace it by $E_L$, identifying the $i^{th}$ longitude of $L$ with the $i^{th}$ longitude of $T$ and the $i^{th}$ meridian of $L$ with the $i^{th}$ meridian of $T$.  The resulting 3-manifold is again homeomorphic to $S^3$ (see Definition 2.2 of \cite{CFT} for more details)
\[f \colon \cl(S^3 \sm \psi(\mathbb{H})) \cup E_L \xrightarrow{\cong} S^3.\]
Denote the image $f(R)$ by  $R(L,\psi)$, the output of the string link operator $R(-,\psi)$ acting on $L$.   When it is clear, we may drop the $\psi$ and write $R(L)$.  In addition, when $L$ has one component, $\psi$ is determined by the curve $\eta$ so we may write $R(L,\eta)$.

By definition, the \emph{(algebraic) winding matrix} of $R(-,\psi)$ is an $\ell \times m$ matrix over $\Z$ with columns $aw_1,\dots,aw_m$, where each $aw_i \in \Z^\ell$ is the element of $H_1(E_R;\Z) \toiso \Z^\ell$ represented by $\eta_i(R,\psi)$.
We say that an operator has (algebraic) \emph{winding number $k$} if every entry of the matrix is equal to~$k$.

The \emph{geometric winding} number of $(R,\psi)$ is an $m$-tuple $(w_1,\dots,w_m) \in (\mathbb{N}_0)^m$ where $w_i$ is the minimal number of intersections of $R$ with the subdisk $D_i$, i.e.\ the number of strands of $R$ that pass through $\eta_i(R,\psi)$.  Here the minimum is taken over all representatives of the pattern equivalence class of $(R,\psi)$.  We remark that for the geometric winding number, the count does not take orientations into account.
\end{definition}

The above definition can be easily adapted to the case that $R$ is a string link.  In that case we obtain a function
  \[R(-,\eta) \colon \mathcal{C}^m_{SL} \to \mathcal{C}^\ell_{SL}.\]
  Next we define the notion of grope concordance of string links.

\begin{definition}
  A \emph{grope concordance} between $m$-component string links $L_0$ and $L_1$ is an embedding of an $m$-component grope $G \subset D^2 \times I \times I$ with $\partial G \subset \partial(D^2 \times I \times I)$, $\partial G \cap (D^2 \times I \times \{j\}) = L_j$ and $\partial G \cap (D^2 \times \{k\} \times I) = \{p_1,\dots,p_m\} \times \{k\} \times I$, for $k,j \in \{0,1\}$.  We say that $L_0$ and $L_1$ cobound the grope~$G$.
\end{definition}


We can easily extend our metrics to string links.

\begin{definition}
  The distance $d^q(L,J)$ between two $m$-component string links $L$ and $J$ with all pairwise linking numbers vanishing, is the distance $d^q(\widehat{L},\widehat{J})$ between their closures $\widehat{L}$ and $\widehat{J}$.  We say that a string link $J$ bounds a group of height $n$, that is $J \in \mathcal{G}_n^m$, if the closure satisfies $\widehat{J} \in \mathcal{G}_n^m$.
\end{definition}

\section{The effect of satellite operations and string link infections}\label{sec:infections}

Classical satellite operators are operators on the metric spaces $(\mathcal{C},d^q)$. More generally, as described above, string link infections may be viewed as functions $(\C^m_{SL},d^q)\to(\C^\ell,d^q)$. In this section we show that these functions are Lipschitz continuous, where the Lipschitz constant depends on the geometric winding number.  We also show that algebraic winding number zero operators are contraction mappings for any $q$ bigger than the geometric winding number.


\begin{definition}
  A \emph{tip} of a grope $\Sigma$ consists of a basis curve on a top stage surface; that is, a surface in $\Sigma$ to which no further surfaces are attached.

  A \emph{cap} for a grope $\Sigma$ (of multiplicity $k$) is a planar surface $D^2 \sm (\sqcup_{i=1}^k \Int(D_i))$ embedded in $D^4 \sm \nu \Sigma$ such that  $D_i \subset D^2$ is a disk, $\partial D^2$ is a normal framing push off of a tip of $\Sigma$, and the interior boundary $\sqcup_{i=1}^k \partial D_i$ is a collection of meridians of $L = \partial \Sigma_{1:1}$.  Note that this definition of a cap is not standard.  We call each interior boundary component $\partial D_i$ a \emph{tip of the cap} or a \emph{cap tip}.  A \emph{capped grope} is a grope for which every top stage surface has a symplectic basis of tips to which caps are attached.

\end{definition}

 We can similarly define a tip and a cap for a grope concordance, and obtain the notion of a capped grope concordance.

\begin{lemma}\label{lem:gropecaps}
If a link $L$ bounds a grope $G$ in a simply-connected $4$-manifold $W$, then the grope can be capped.
\end{lemma}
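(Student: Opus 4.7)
The plan is to adapt the classical capping construction in simply-connected 4-manifolds (see \cite[Chapter~2]{FQ}), taking advantage of the flexibility afforded by this paper's cap definition: our caps may have extra boundary components along meridians of $L$, which lets us trade unwanted intersections for additional boundary circles.

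Fix a symplectic basis of tips on each top-stage surface of $G$. For each such tip $\alpha$, let $\alpha'$ be the normal-framing push-off of $\alpha$ into $W \setminus \nu G$. Since $\pi_1(W)=0$, the loop $\alpha'$ bounds an immersed smooth disk $f_\alpha \colon D^2 \to W$; by general position we may assume the whole collection $\{f_\alpha\}$ consists of generic immersions, with isolated transverse self-intersections and with images meeting one another and $G$ transversely. The key tool is a \emph{piping to $L$} operation: given a transverse intersection $p \in f_\alpha(D^2) \cap G$, puncture $f_\alpha$ at $p$ to create a new boundary circle $m_p$, which is a meridian of $G$ at $p$. Since $G$ is connected with $\partial G_{1:1} = L$, we may choose a generic embedded path in $G$ from $p$ to a point of $L$; parallel-transporting $m_p$ along this path via the normal $D^2$-bundle trivialization of $G$ produces an embedded annulus in $\partial \nu G$ whose other boundary is a meridian of $L$. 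Attaching this annulus yields a planar surface in $W \setminus \nu G$ with one additional meridian-of-$L$ boundary.

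To eliminate the remaining self-intersections of each $f_\alpha$ and the cap-cap intersections, first convert each double point $p \notin G$ into a single $G$-intersection by a finger-move: choose a generic embedded arc $\gamma \subset W$ from $p$ to a point of $G$ whose interior avoids $G$, every cap, and all other auxiliary arcs, and push one of the two sheets at $p$ along $\gamma$. This removes the double point at $p$ while creating a single new transverse intersection with $G$ at the endpoint of $\gamma$, which is then resolved by piping to $L$. The main technical obstacle is bookkeeping---ensuring that the successive pipings and finger-moves do not cascade into new intersections---which is handled by choosing all auxiliary paths mutually generically and observing that each operation strictly decreases a well-defined complexity, such as the total number of self- and cap-cap intersections plus $G$-intersections, so the process terminates in finitely many steps and produces the desired disjoint embedded planar caps.
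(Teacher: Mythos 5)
Your ``piping to $L$'' is essentially the paper's tubing operation, but you apply it directly to intersection points on \emph{any} stage of $G$, whereas the paper first pushes all $G$-intersections down to the first-stage surfaces. This is not a cosmetic difference. A grope is a $2$-complex, not a manifold: at a seam (an attaching circle where a higher-stage surface meets a lower one), there is no single normal $D^2$-bundle trivialization of $G$. A path in $G$ from an intersection point on a higher stage down to a point of $L$ necessarily crosses such seams, and the ``parallel transport of $m_p$ in $\partial\nu G$'' is undefined there. The paper's proof avoids this by repeatedly applying the push-down operation (FQ~\S2.5), which replaces one intersection with a higher stage by two intersections with the next lower stage, until every intersection lies on a first-stage surface --- an honest surface with boundary on $L$ --- and only then tubes circle fibers to meridians of $L$ inside the $S^1$-bundle over that surface.

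The finger-move step is incorrect. Pushing one sheet at $p$ along an arc $\gamma$ whose interior avoids $G$ and all caps is an isotopy of that sheet \emph{rel boundary} and disjoint from the other sheet; a short local computation in transverse coordinates shows that the intersection point merely migrates a little --- it is not removed (a finger move \emph{creates} a cancelling pair of intersections with whatever it passes through, it never destroys one for free). Consequently your claimed complexity (self- and cap-cap double points plus $G$-intersections) does not strictly decrease, and the termination argument collapses. The paper instead invokes the FQ~\S1.5 technique of pushing intersections off the boundary to replace the framed immersions $\{\delta_j\}$ by disjointly embedded ones; this is a genuine theorem about immersed surfaces in $4$-manifolds, not a generic-position argument.

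A smaller omission: you never arrange the caps to be correctly framed. The paper handles this by boundary twisting (FQ~Corollary~1.3B) before making the disks disjoint.
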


\begin{proof}
Consider the embedded framed link $\{\ell_j\}$ consisting of the tip circles of $G$. Any link in the interior of a simply-connected $4$-manifold bounds a set of smoothly immersed $2$-disks $\{\delta_j\}$ in the interior of $W$, which we may assume to intersect $G$ transversely. By boundary twisting, we may also assume that for each immersion the given framing on $\{\ell_j\}$ extends to  $\{\delta_j\}$~\cite[Corollary 1.3B]{FQ}. Any such collection of framed immersions can be replaced by a collection that is disjointly embedded (by pushing intersections off the boundary) ~\cite[Section 1.5]{FQ}.   Then each transverse intersection of one of the embedded disks $\delta_j$ with one of the surface stages of  $G$ can be ``pushed down'' (by an isotopy) to create two new intersections with a lower stage~\cite[Section 2.5]{FQ}. In this fashion we may assume that all of the intersections of $\{\delta_j\}$ with $G$ are with the first stage surfaces. After removing small $2$-disks from the $\delta_j$ at these intersection points, we have a collection of disjointly embedded genus zero surfaces whose boundaries are disjoint copies of circle fibers of the regular neighborhood of the first stage surfaces. These can be joined by disjoint tubes in this circle bundle to meridians of $L$, until we arrive at a collection of caps, that is a disjoint collection of framed genus zero surfaces $F_j$ whose interiors are in the complement of $G$, wherein the boundary of $F_j$ is the tip $\ell_j$ together with a number of disjoint parallel copies of certain meridians of $L$ (the cap tips).
\end{proof}

A similar lemma holds holds when $G$ is a grope concordance.  The next proposition is our main technical result for constructing gropes.



\begin{prop}\label{prop:effectinfections}
Let $R(-,\psi) \colon \mathcal{C}^m_{SL} \to \mathcal{C}^{\ell}$ be a satellite operator as in Definition~\ref{def:stringlinkinfection} (where $R$ is a link or string link; in the latter case we have a function $\mathcal{C}^m_{SL} \to \mathcal{C}^{\ell}_{SL}$)
with geometric winding numbers $(w_1,\dots,w_m)$, where the link $\eta(R,\psi)$ bounds a symmetric grope $G_\eta$ of height $h$ in $(S^3 \smallsetminus (R \cup \Int(\psi(\mathbb{B}))))\times [0,1]$. Let $L_0, L_1$ be string links that are grope cobordant via a branch-symmetric grope $G_L=(G_{L_1},\dots,G_{L_m}) \subset D^2 \times I \times I$. Then $R(L_0,\psi)$ and $R(L_1,\psi)$ are grope cobordant via a branch-symmetric grope $G_{R(L)}$, that, loosely speaking, is formed from $w_j$ copies of $G_{L_j}$ for $1\leq j\leq m$, and multiple copies of the components of $G_\eta$ attached to tips of the copies of $G_L$.  More specifically,
\begin{enumerate}[(A)]
\item\label{item:effect-infections-A}  The genus of the first stage surface of $G_{R(L)_s}$ is
\begin{equation}\label{eq:genuschange}
g_1(G_{R(L)_s})=\sum_{j=1}^\ell w_j^sg_1(G_{L_j});
\end{equation}
where  $w_j^s$ is the number of strands of $R_s$ that pass through the $j^{th}$ subdisk of $\mathbb{D}$.
\item\label{item:effect-infections-B}  A branch, $B'$, of the new grope $G_{R(L)}$  consists (abstractly) of a copy of a branch, $B$, of $G_L$ along with (a boundary connected sum of) copies of $G_\eta$ attached to each tip of $B$. Thus the length of $B'$ is $h$ more than the length of $B$; and
\item\label{item:effect-infections-C} for each tip of $B$ the number of copies of $G_{\eta_j}$  used is equal to the $j^{th}$ cap multiplicity i.e.\ to the number of meridians of $L_j$ occurring in the (punctured) cap chosen for this tip of $B$.
\end{enumerate}
Moreover, in the special case that $R$ is a slice link and $L_1$ is a trivial string link, so $R(L_1,\psi)$ bounds slice disks $\Delta\hookrightarrow B^4$, then, by appending $\Delta$ to $G_{R(L)}$, we have that $R(L_0,\psi)$ bounds a grope in $B^4$; and for this case the weaker condition that the link $(\eta_1,\dots,\eta_m)$ bounds a symmetric grope of height $h$ in $B^4-\Delta$ is sufficient.
\end{prop}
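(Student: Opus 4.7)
My plan is to construct $G_{R(L)}$ inside $S^3\times I$ by gluing three pieces: the product cobordism $R\times I$ outside the infection region $\psi(\mathbb{B})\times I$; parallel copies of $G_L$ inside $\psi(\mathbb{B})\times I$; and copies of components of $G_\eta$ grafted at each tip of $G_L$ in place of the caps that $G_L$ would otherwise require. The geometric winding number $w_j^s$ dictates how many parallel copies of $G_{L_j}$ sit in $R(L)_s$, and the cap multiplicity $\mu_k^\tau$ dictates how many copies of $G_{\eta_k}$ are grafted at each tip $\tau$.

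First I would apply Lemma~\ref{lem:gropecaps} to cap $G_L$ in its simply-connected ambient $D^2\times I\times I=\psi(\mathbb{B})\times I$. At each tip $\tau$ of $G_L$ this yields a planar cap $C_\tau$ whose outer boundary is $\tau$ and whose inner boundaries are parallel copies of meridians of $L_0\cup L_1\cup(\{p_i\}\times\{0,1\}\times I)$; let $\mu_k^\tau$ count those meridians belonging to the $k$-th component. Next I would construct the first-stage surface of $G_{R(L)_s}$: outside $\psi(\mathbb{B})\times I$ take $R_s\times I$, and inside, for each of the $w_j^s$ strands of $R_s$ through $D_j$, insert a parallel push-off of the first-stage surface of $G_{L_j}$, joined to the outside annuli by bridges from $R_s\times I$. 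Summing genera yields Equation~\eqref{eq:genuschange}, establishing (A). I would copy the higher-stage surfaces of $G_L$ in parallel in the same way, so that all branches of $G_L$ reappear $w_j^s$-fold above each $D_j$.

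Then I would address the tips. Under the infection gluing, each inner boundary of a parallel copy of $C_\tau$, originally a meridian of the $k$-th component of $L$, is isotopic to a parallel copy of $\eta_k\subset S^3\setminus(R\cup\Int(\psi(\mathbb{B})))$. At each such inner boundary I would glue in a parallel copy of the first-stage surface of $G_{\eta_k}$ along its base circle, and then append the full set of higher stages of $G_{\eta_k}$. The resulting composite is a symmetric grope of height $h$ attached at $\tau$ (the cap contributes no genus, and the combined first-stage genus is $\sum_k \mu_k^\tau g_1(G_{\eta_k})$), so each branch of the new grope has length equal to the original branch length of $G_L$ plus $h$, and at each tip of $G_L$ the number of attached $G_{\eta_k}$ copies equals $\mu_k^\tau$. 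Because the dual symmetric gropes in a given branch of $G_L$ get extended by the same amount $h$, the branch-symmetric structure is preserved. This gives (B) and (C).

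The main technical obstacle will be the geometric bookkeeping: the parallel copies of $G_L$, the cap pieces $C_\tau$, and the copies of $G_\eta$ must all be simultaneously disjointly embedded and correctly framed in $S^3\times I$. The key enabling fact is that $G_\eta$ is assumed to lie in $(S^3\setminus(R\cup\Int(\psi(\mathbb{B}))))\times I$, disjoint from the infection region where the parallel copies of $G_L$ sit, so parallel push-offs of each can be arranged independently, using the product framings on the tubular neighborhoods of $G_L$ and $G_\eta$. For the last sentence of the proposition: if $R$ is slice with disks $\Delta\hookrightarrow B^4$ and $L_1$ is trivial, then $R(L_1,\psi)=R$, and I would attach $\Delta$ at the $t=1$ end of $G_{R(L)}$ to obtain a grope for $R(L_0,\psi)$ in $B^4$. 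The weaker hypothesis that $\eta$ bounds $G_\eta$ in $B^4\setminus\Delta$ suffices because $B^4\setminus\Delta$ provides the room previously supplied by the collar $(S^3\setminus R)\times I$ in the general case.
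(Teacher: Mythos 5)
Your proposal follows the same overall strategy as the paper's proof: cap $G_L$ via Lemma~\ref{lem:gropecaps}, take $w_j^s$ parallel copies of $G_{L_j}$ inside $\psi(\mathbb{B})\times I$, glue to the product $R\times I$ outside, and then graft copies of $G_{\eta_k}$ onto the cap tips. The genus count giving (A), the branch-length bookkeeping giving (B), and the multiplicity count giving (C) all match, as does the treatment of the slice-disk special case.

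One place where the paper is more careful and you are somewhat cavalier: you allow the cap tips to be meridians of $L_0\cup L_1\cup(\{p_i\}\times\{0,1\}\times I)$, scattered throughout $[0,1]$. The paper instead observes that the tubing step at the end of Lemma~\ref{lem:gropecaps} can be routed entirely toward $L_1$, so that all cap tips become meridians of $L_1$; it then extends the cobordism by a product $R(L_1)\times[1,2]$ over $S^3\times[1,2]$, identifies the cap tips with copies of $\eta_j$ in $S^3\times\{1\}$, and places the disjoint copies of $G_{\eta_j}$ wholly in $S^3\times[1,2]$, where there is nothing else. That two-step arrangement makes the disjointness and framing of the grafted $G_\eta$ copies essentially automatic, whereas in your version one still has to argue that copies of $G_\eta$ squeezed into thin slabs at varying levels $t\in[0,1]$ can be made simultaneously disjoint from each other, from the caps, and from $(R\cap\mathcal{B})\times I$ --- true, but requiring more argument than you give. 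You also do not mention the ``fat meridian'' subtlety: the inner boundaries of the parallel caps encircle all $w_j^s$ parallel copies at once, and one must use the standard trick of taking the parallel copies inside the original tubular neighborhood so that these fat meridians coincide with the original meridians (hence with $\eta_j$). Neither omission is a conceptual error, but both are worth tightening.
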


The techniques used in the following proof are very similar to those used in~\cite[Prop. 3.4, Corollary 3.14]{CT} and~\cite[Thm 3.4]{Hor2}.  The difference is that we keep precise track of the genera and the number of copies used in the construction.

\begin{proof} We will assume that $R$ is a link in $S^3$.  The proof is essentially the same if $R$ is a string link.

First, we will describe a simple grope concordance $G'$ between $R(L_0)$ and $R(L_1)$. By hypothesis the string link $L_0\hookrightarrow D^2 \times I \times  \{0\}$ is grope cobordant via $G_L$, in $D^2 \times I \times I$ to the string link $L_1$ in $D^2 \times I \times \{1\}$. Note that by Lemma~\ref{lem:gropecaps} we can choose caps for $G_L$. We can assume that the tips of these caps are assumed to be copies of the meridians of the components of $L_1$, since in the proof of Lemma 5.2 the final tubing to the boundary can be done in either direction, to $L_0$ or to $L_1$; in particular we can choose all tubes so that they lead to meridians of $L_1$.  Henceforth in this proof we assume that these caps are part of $G_L$.  The choices of caps will affect the structure of the grope.

Let $(w_1,\dots,w_m)L$ denote the string link obtained by taking, for each $1\leq j\leq m$, $w_j$ parallel copies of the $j^{th}$ component of $L$ and then perhaps changing the string orientation of some of the copies as needed below.  Thus $(w_1,\dots,w_m)L_0$ is grope cobordant in $\mathbb{B}\times [0,1]$ to $(w_1,\dots,w_m)L_1$ via a grope that we will call $(w_1,\dots,w_m)G_L$.  The latter is obtained by taking parallel copies of the components of $G_L$.

For each strand of the $s^{th}$ component of $R$ that passes through $\eta_j$ we need a copy of the $j^{th}$ component of the grope concordance $G_L$. This copy of $(G_L)_j$ will (below) become part of the first stage surface for the $s^{th}$-component of the grope concordance $G'$. This observation justifies equation~(\ref{eq:genuschange}). Moreover this grope concordance is capped by parallel copies of the caps of $G_L$.

A key observation is that each of the cap tips of $(w_1,\dots,w_m)G_L$ will \textit{not} be a meridian of $(w_1,\dots,w_m)L_1$, but rather will be a ``fat meridian'' of say the $j^{th}$-component of $L_1$ (a circle that encloses all the parallel copies of $(L_1)_j$ which were taken).  So it is best to think of taking parallel copies of the components of $L_1$ that lie inside the original tubular neighborhood of $L_1$, so that these new fat meridians are actually the same as the meridians of the original components of $L_1$.

Now let $\mathcal{B}$ denote the complementary $3$-ball to $\psi(\mathbb{B})$, meaning $S^3=\mathcal{B}\cup \psi(\mathbb{B})$ where $\psi\colon \mathbb{B} \rightarrow S^3$ is an extension of $\psi \colon \mathbb{D} \rightarrow S^3$  as described in Definition~\ref{def:string_link}.
 Then, by definition of string link infection, $(S^3,R(L_k))$, for $k=0,1$, decomposes as $$(\psi(\mathbb{B}),(w_1,\dots,w_m)L_k)\cup(\mathcal{B},R\cap \mathcal{B}),$$ for a certain choice of string orientations on the components of $(w_1,\dots,w_m)L_k$. Now define a grope concordance $G'$ from $R(L_0)$ to $R(L_1)$ by
$$
G'\equiv (w_1,\dots,w_m)G_L \cup \left ((R\cap \mathcal{B}) \times [0,1]\right)\hookrightarrow \left(\mathbb{B}\times [0,1]\right)\cup \left(\mathcal{B}\times [0,1]\right)\equiv S^3\times [0,1].
$$
Recall that the boundaries of the punctured caps of $G'$ are copies of the meridians of the original components of $L_1$ in $\mathbb{B}\times \{1\}$. But these are identified, in the process of string link infection, with copies of the circles $\eta_j$ in $S^3\times \{1\}$.

Now we will add  extra stages to the grope concordance $(S^3\times [0,1], G')$.  First extend $G'$ to $(S^3\times [0,2], G'')$ by adding the product annuli $R(L_1)\times [1,2]\hookrightarrow S^3\times [1,2]$. By hypothesis $(\eta_1,\dots,\eta_m)\hookrightarrow S^3\times \{1\}$ bounds a symmetric grope $G_\eta$ of height $h$ in $(S^3 \smallsetminus (R \cup \Int(\psi(\mathbb{B}))))\times [1,2]$
and hence certainly bounds such a grope in the exterior of the product annuli $R(L_1)\times [1,2]\hookrightarrow S^3\times [1,2]$. So, finally, we can add copies of the $\pm G_{\eta_j}\hookrightarrow S^3\times [1,2]$ to each of the copies of $\pm \eta_j$ that occur as tips of the  caps of $G''$.
The resulting grope, which we call $G_{R(L)}$, is a grope concordance from $R(L_0)$ to $R(L_1)$ each of whose branches has length $h$ more than that of the corresponding branch of $G_L$.
A schematic of the construction is shown in Figure~\ref{figure:doubling-op-proof-2}.
\begin{figure}[h]
\begin{center}
\begin{tikzpicture}
\node[anchor=south west,inner sep=0] at (0,0){\includegraphics[scale=0.35]{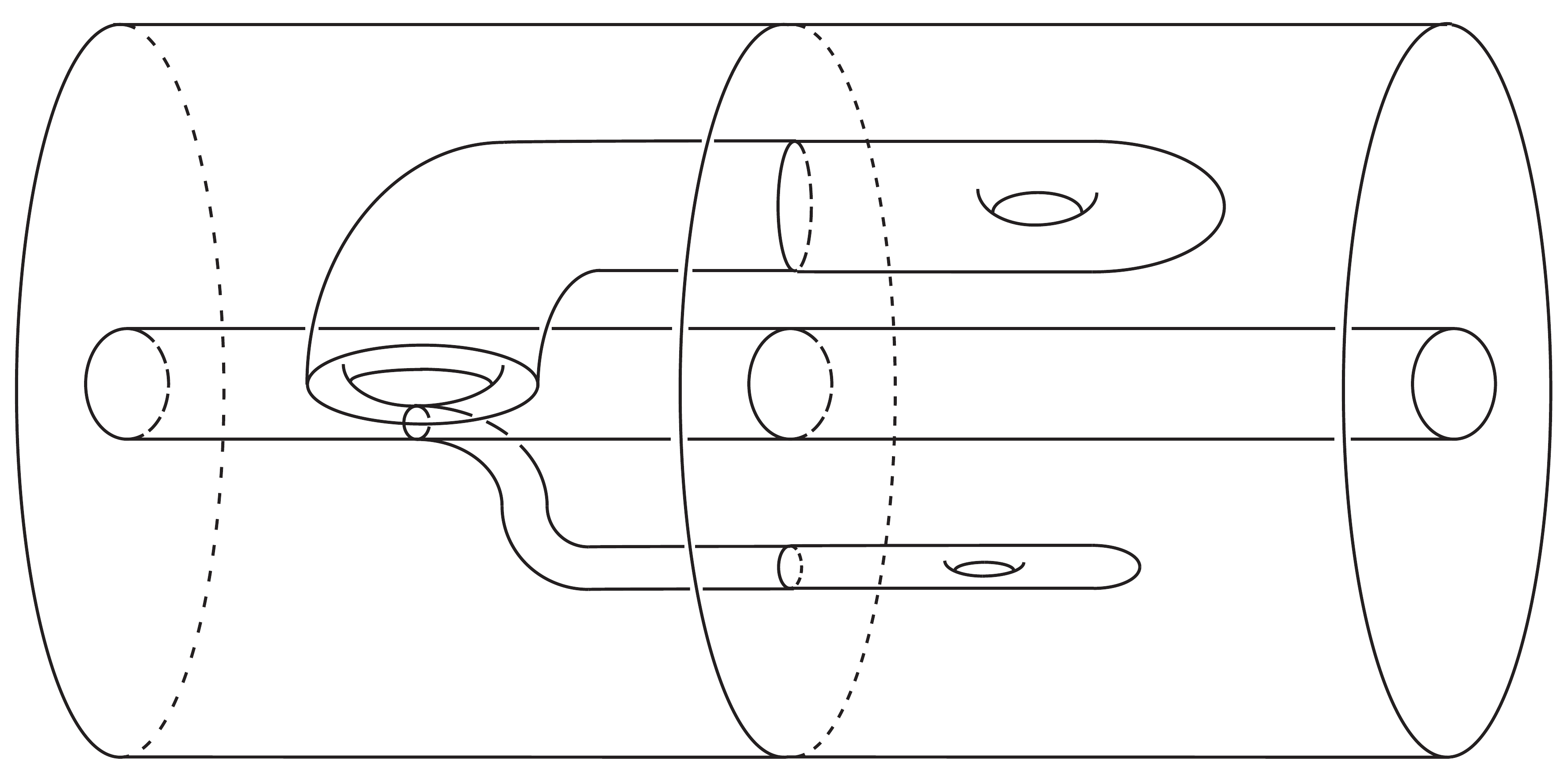}};
\node at (2.5,2.5)  {$G'$};
\node at (8.4,5.4)  {$G_\eta$};
\node at (8,1.2)  {$G_\eta$};
\node at (9,6.35)  {$S^3 \times [1,2]$};
\node at (3.8,6.35)  {$S^3 \times [0,1]$};
\node at (9,2.5)  {$R(L_1) \times[1,2]$};
\end{tikzpicture}
\end{center}
     \caption{A schematic of the construction of $G_{R(L)}$.  The links $R(L_i)$ are drawn as single circles and each component grope is drawn as a genus one surface.}
    \label{figure:doubling-op-proof-2}
\end{figure}

Moreover, in the case that $R$ is a slice link and $L_1$ is a trivial string link, so $R(L_1)$ is a slice link which bounds some slice disks $\Delta\hookrightarrow B^4$, then, by appending $\Delta$ to $G_{R(L)}$ constructed above, we have that $R(L_0)$ bounds a grope in $B^4$ with essentially the same topology and combinatorics as above.  Moreover, for this case the weaker condition that the link $(\eta_1,\dots,\eta_m)$ bounds a symmetric grope of height $h$ in $B^4-\Delta$ is sufficient, as can be seen by analyzing the previous paragraph.
\end{proof}

The following corollary generalizes~\cite[Prop. 3.4, Corollary 3.14]{CT}, \cite[Prop. 4.7]{Cha2014} and \cite[Thm 3.4]{Hor2}.
Recall that we denote the set of $m$ component links that bound a grope of height $n$ in $D^4$ by $\mathcal{G}_n^m$, and we say that a string link $J \in \mathcal{G}_n^m$ if $\widehat{J} \in \mathcal{G}_n^m$.

\begin{corollary}\label{cor:gropelengthincrease} Let $R=(R_1,\dots,R_\ell)$ be a slice link (or a slice string link) that admits a system of slice disks $\Delta$. Suppose the link $(\eta_1,\dots,\eta_m)$ is the data of a string link infection as above and bounds a symmetric grope of height $h$ in the exterior of $\Delta$. Suppose $L\in \mathcal{G}_n^m$. Then $R(L)\in \mathcal{G}_{n+h}^\ell$.
\end{corollary}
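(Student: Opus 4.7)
The plan is to invoke Proposition~\ref{prop:effectinfections} in its ``moreover'' form. Set $L_0 := L$ and let $L_1$ be the trivial $m$-component string link. Since $L \in \mathcal{G}_n^m$, the closure $\widehat{L}$ bounds a symmetric grope of height $n$ in $B^4$; by removing a regular neighborhood of the closing arcs and viewing what remains as $D^2 \times I \times I$, I would convert this grope into a grope cobordism $G_L$ of height $n$ between $L$ and $L_1$. This puts us in the setup of the proposition with a symmetric (hence in particular branch-symmetric) grope $G_L$.

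Next I would check that the weaker hypotheses of the special case of Proposition~\ref{prop:effectinfections} are met: $R$ is slice with a chosen system of slice disks $\Delta \hookrightarrow B^4$, so $R(L_1,\psi)$ bounds $\Delta$, and by hypothesis $(\eta_1,\dots,\eta_m)$ bounds a symmetric grope $G_\eta$ of height $h$ in $B^4 \smallsetminus \Delta$. Applying the proposition and appending $\Delta$ then yields a grope $G_{R(L)}$ in $B^4$ bounded by $R(L)=R(L_0,\psi)$.

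It remains to identify the height of $G_{R(L)}$ as a symmetric grope. By part (B) of Proposition~\ref{prop:effectinfections}, each branch of $G_{R(L)}$ is built from a branch of $G_L$ together with (boundary connect-sums of) copies of $G_\eta$ attached at each tip. Because $G_L$ is symmetric of height $n$, each of its branches has length $n-1$, so every branch of $G_{R(L)}$ has length $(n-1)+h = n+h-1$. Since the attached copies of $G_\eta$ are themselves symmetric gropes of the common height $h$, attached uniformly to every tip of the (symmetric) branches of $G_L$, the resulting grope inherits the symmetric structure from $G_L$ and $G_\eta$ stage by stage. Thus $G_{R(L)}$ is a symmetric grope of height $n+h$, proving $R(L) \in \mathcal{G}_{n+h}^\ell$.

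The main obstacle is the bookkeeping underlying the last paragraph: one must trace through the construction in Proposition~\ref{prop:effectinfections} to confirm that it really produces a \emph{symmetric} grope rather than a merely branch-symmetric one. The key observation is that the construction is symmetric at every stage --- equal-height symmetric gropes $G_\eta$ are appended to dual pairs of tips in the symmetric branches of $G_L$ --- so once the setup is correctly arranged the symmetry propagates automatically.
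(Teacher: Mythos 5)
Your proposal is correct and follows the same route as the paper, which simply observes that the hypotheses of the corollary are exactly those of the special case in the final sentence of Proposition~\ref{prop:effectinfections}; you have usefully unpacked the implicit bookkeeping (branch length $(n-1)+h$, hence symmetric height $n+h$) that the paper leaves unsaid.
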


\begin{proof} The hypotheses of Corollary~\ref{cor:gropelengthincrease} are the hypotheses of the special case in the last sentence of Proposition~\ref{prop:effectinfections}.
\end{proof}

Recall that a \emph{doubling operator} is a special case of a string link infection wherein $R$ is a slice link (or a slice string link)  and $\ell k(\eta_j,R_k)=0$ for all $j,k$; see for example~\cite[p.1598]{CHL4},\cite[p.1425]{CHL3} and \cite[Def.1.3]{Burke2014}.  The latter condition says that the algebraic winding number is zero.

Consider the special case of a string link infection when $R=P$ is a knot and $m=1$, so $\eta = \eta_1$ is a single curve.  We can think of $P$ as a knot in the solid torus $\ol{S^3 - (\mathbb{B} - (D_1 \times I))}$, called the \emph{pattern knot}.  Then $P(-,\eta) \colon \mathcal{C} \to \mathcal{C}$ is called a \emph{satellite operator}.  The algebraic and geometric winding numbers are now both just single numbers.

\begin{corollary}\label{cor:doublingincreaselength} For any doubling operator $(R,\eta)$, $R(\mathcal{G}_n^m)\subset \mathcal{G}_{n+1}^\ell$. In particular for any winding number zero pattern knot $P$ which is a slice knot when viewed in $S^3$, the induced (faithful) satellite operator $P\colon \C\to \C$ satisfies $P(\mathcal{G}_n)\subset \mathcal{G}_{n+1}$ for each $n$.
\end{corollary}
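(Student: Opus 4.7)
The plan is to reduce the corollary to Corollary~\ref{cor:gropelengthincrease} by verifying the only nontrivial hypothesis: that the meridian-like curves $(\eta_1,\dots,\eta_m)$ bound a height~$1$ symmetric grope, i.e.\ disjointly embedded Seifert surfaces, in the exterior of a system $\Delta$ of slice disks for $R$. Once this is established, applying Corollary~\ref{cor:gropelengthincrease} with $h=1$ gives $R(\mathcal{G}_n^m)\subset \mathcal{G}_{n+1}^\ell$ immediately, and the satellite statement follows as the special case $m=\ell=1$ with $R=P$.

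To produce the height~$1$ grope, I would proceed as follows. Since $R$ is a slice link (or slice string link) by the definition of a doubling operator, fix a system of slice disks $\Delta\hookrightarrow B^4$. A Mayer–Vietoris computation gives $H_1(B^4\smallsetminus \nu\Delta;\Z)\cong \Z^\ell$, generated by the meridians $\mu_1,\dots,\mu_\ell$ of $R$. The hypothesis $\ell k(\eta_j,R_k)=0$ for all $j,k$ means that, viewed in $S^3=\partial B^4\subset B^4\smallsetminus \nu\Delta$, each $\eta_j$ represents $\sum_k \ell k(\eta_j,R_k)[\mu_k]=0$. Hence each $\eta_j$ is null-homologous in $B^4\smallsetminus \nu\Delta$ and therefore bounds an oriented surface $F_j$ properly embedded there.

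The remaining technical step is to arrange the $F_j$ to be pairwise disjoint, which is where the mild care is needed. After small perturbation, the surfaces $F_j$ and $F_k$ for $j\ne k$ meet transversely in a finite collection of double points. Because the $\eta_j$ bound disjoint flat disks in the multidisk $\mathbb{D}$, they have pairwise linking number zero in $S^3$, so the algebraic intersection of $F_j$ and $F_k$ in $B^4\smallsetminus \nu\Delta$ vanishes and the double points pair up by sign. Each oppositely-signed pair can be removed by the standard Norman trick of tubing: remove small disks around the two points in $F_j$ and join them by an embedded annulus running close to an arc in $F_k$, staying in the exterior of $\Delta$. Iterating over all pairs of surfaces removes all intersections at the cost of adding handles, producing the desired disjoint union $F=F_1\sqcup\cdots\sqcup F_m$ of oriented surfaces with $\partial F_j=\eta_j$ in $B^4\smallsetminus \nu\Delta$. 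By definition this is a symmetric grope of height~$1$ bounded by $(\eta_1,\dots,\eta_m)$ in the exterior of $\Delta$.

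With this in hand, the hypotheses of Corollary~\ref{cor:gropelengthincrease} are satisfied with $h=1$, so for any $L\in\mathcal{G}_n^m$ the infected link $R(L)$ lies in $\mathcal{G}_{n+1}^\ell$, proving the first assertion. The second assertion is the special case $R=P$, $m=\ell=1$, where the doubling hypothesis becomes exactly the conditions that $P$ be slice in $S^3$ and of algebraic winding number zero. The main (and essentially only) obstacle is the disjointness of the first-stage surfaces $F_j$; everything else is an unpacking of definitions and a direct appeal to the previous corollary.
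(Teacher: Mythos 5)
Your proof is correct, and it takes a genuinely different route to the key lemma than the paper does. Both arguments reduce the corollary to the claim that $(\eta_1,\dots,\eta_m)$ bounds a disjoint collection of embedded, framed surfaces (a height~$1$ symmetric grope) in the exterior of a system of slice disks $\Delta$ for $R$, after which Corollary~\ref{cor:gropelengthincrease} with $h=1$ finishes things. The paper proves this claim via Lemma~\ref{lem:doublingbounds}: it chooses a Seifert surface $S_j \subset S^3$ for each $\eta_j$ and places $S_j$ in the level $S^3\times\{j\}$, so that the resulting surfaces $F_j = (\eta_j\times[0,j])\cup S_j$ are disjoint from one another by construction, and the only intersections that need to be tubed away are those with $R\times[0,m]$, which cancel in pairs because $\ell k(\eta_j,R_k)=0$; the slice-disk version then follows by arranging $\Delta$ to be a product near $S^3$. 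Your approach instead argues purely homologically inside $B^4 \smallsetminus \nu\Delta$: compute $H_1(B^4\smallsetminus\nu\Delta)\cong\Z^\ell$, observe $[\eta_j]=0$ by the vanishing linking numbers, take embedded nullbounding surfaces, and then remove the pairwise intersections $F_j\cap F_k$ by tubing along pushed-off arcs in $F_k$, which is available since $\ell k(\eta_j,\eta_k)=0$ makes the signed intersection count vanish. The trade-off is that the paper's level-by-level construction makes disjointness of the $F_j$ automatic (at the cost of building them explicitly and treating the slice-disk case as a separate remark), whereas you get the surfaces directly in $B^4\smallsetminus\nu\Delta$ but need the extra tubing step to separate them; you should note that the tube is a thin annulus around a $1$-dimensional arc, so by general position it can be made to miss $\Delta$, the other $F_i$'s, and $F_j$ itself, and that the push-off uses the trivial normal bundle of $F_k$. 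Also be aware that the paper's Lemma~\ref{lem:doublingbounds} proves the stronger statement that the grope exists in $(S^3 \smallsetminus (R\cup\eta))\times[0,1]$, which is needed later in Proposition~\ref{prop:contraction}; your argument, tailored to the slice-disk setting, would not by itself establish that stronger version. Finally, the terminology ``Norman trick'' refers specifically to tubing into a geometrically dual sphere, which is not what you are doing here; the move you describe is ordinary tubing along an arc in the other surface.
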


Here faithful refers to the lack of potential twistings that could occur during a satellite operation; see \cite[p.~111]{R}.

\begin{proof} Since $(R,\eta)$ is a doubling operator, $R=(R_1,\dots,R_\ell)$ is an $\ell$-component slice link.  Let $\Delta$ be a collection of slice disks. Moreover  $(\eta_1,\dots,\eta_m)$ forms a trivial link in $S^3-R$, for which $\ell k(\eta_j,R_k)=0$ for all $j,k$. Hence the following lemma can be applied to find a collection of disjointly embedded height one gropes with boundary the $\eta_i$. The proof is then finished by applying Corollary~\ref{cor:gropelengthincrease} with $h=1$.
\end{proof}

\begin{lemma}\label{lem:doublingbounds}
For any  link (or string link) $R$, and any set of disjoint circles $\{\eta_1,\dots,\eta_m\}$ in the exterior of $R$ for which $\ell k(\eta_i,\eta_j)=\ell k(\eta_i,R_k)=0$ for all $i,j,k$, the link $\{\eta_1,\dots,\eta_m\}$ bounds a height one grope whose interior lies in $(S^3-R-\eta_1-\eta_2-\dots-\eta_m)\times [0,1]$. Moreover, for any set of slice disks $\Delta$ for a link (or string link) $R$, the link $\{\eta_1,\dots,\eta_m\}$ bounds a height one grope in $B^4-\Delta$.
\end{lemma}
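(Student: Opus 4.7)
The plan is to construct, for each $i$, a single embedded oriented surface $F_i \subset S^3$ with $\partial F_i = \eta_i$ lying in the complement $S^3 \setminus (R \cup \bigcup_{j \neq i}\eta_j)$, and then to separate the $F_i$ from one another by pushing their interiors into pairwise disjoint time slabs of $S^3 \times [0,1]$. The second assertion will then follow by carrying out the same construction inside a collar of $\partial B^4$ in $B^4 \setminus \Delta$.

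To build $F_i$ in the correct complement, I would start with any Seifert surface $F_i' \subset S^3$ for $\eta_i$ and put it in general position with respect to $R \cup \bigcup_{j\neq i}\eta_j$. Because $\ell k(\eta_i, R_k) = 0 = \ell k(\eta_i, \eta_j)$ by hypothesis, the signed count of intersections of $F_i'$ with each $R_k$ and with each $\eta_j$ (for $j \neq i$) is zero, so the geometric intersections come in cancelling pairs. One then tubes off each cancelling pair along a short arc on the corresponding link component; since the components of $R \cup \bigcup_j \eta_j$ are pairwise disjoint, a sufficiently thin tubular neighborhood of such an arc on $\eta_j$ (or on $R_k$) avoids the other components, so the tubing stays inside the desired complement. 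This yields an embedded surface $F_i \subset S^3 \setminus (R \cup \bigcup_{j \neq i}\eta_j)$ with $\partial F_i = \eta_i$.

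To separate the $F_i$, I would use a standard level-pushing argument. Choose pairwise disjoint closed subintervals $I_1,\dots, I_m \subset (0,1]$. For each $i$, ambient isotope $F_i$ inside $S^3 \times [0,1]$, fixing $\partial F_i = \eta_i \subset S^3 \times \{0\}$, so that the interior of $F_i$ lies in $S^3 \times I_i$; this is accomplished by composing with a collar-pushing diffeomorphism of $S^3 \times [0,1]$ supported near $S^3 \times \{0\}$. The disjointness of the $I_i$ places the bulk of distinct $F_i$ in disjoint slabs, and near the boundary one chooses mutually disjoint tubular neighborhoods of the disjoint curves $\eta_i \subset S^3 \times \{0\}$. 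By construction, each $F_i$'s interior avoids $R \cup \bigcup_j \eta_j$ at every level, proving the first assertion.

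For the second statement, choose a collar $S^3 \times [0,1) \subset B^4$ of $\partial B^4 = S^3$ in which (after a small isotopy of $\Delta$) the slice disks meet the collar as $R \times [0,1)$. Then $(S^3 \setminus R) \times [0,1) \subset B^4 \setminus \Delta$, and carrying out the above construction inside this collar produces disjoint surfaces in $B^4 \setminus \Delta$ bounded by the $\eta_i$. The only technical point in the argument is the tubing step in the construction of each $F_i$; this is entirely routine once one notes that the $\eta$'s and components of $R$ are mutually disjoint, so sufficiently thin tubes along one component introduce no new intersections with the others.
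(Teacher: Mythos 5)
Your overall strategy is sound and ends up in the same place, but the order of operations differs from the paper's in a way worth noting. You first arrange each Seifert surface $F_i$ for $\eta_i$ to be disjoint from \emph{both} $R$ and the other $\eta_j$ entirely inside $S^3$, and only then disjointify the $F_i$ from one another by pushing into distinct level slabs of $S^3\times[0,1]$. The paper instead only requires the Seifert surfaces $S_j$ to be disjoint from the other $\eta_i$ (allowing them to hit $R$), pushes them to distinct levels $S^3\times\{j\}$, and \emph{then} tubes off the intersections with $R\times[0,m]$ inside the $4$-manifold $S^3\times[0,m]$.

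The reason the paper defers the $R$-tubing to the $4$-dimensional setting is precisely the point your argument glosses over: in $S^3$, the tube you attach along an arc of $R_k$ is a $2$-complex that can generically intersect the rest of $F_i'$ in a $1$-manifold (two surfaces in a $3$-manifold are not generically disjoint), so ``a sufficiently thin tube avoids the other link components'' does not by itself guarantee that the tube misses $F_i'$. By contrast, in $S^3\times[0,m]$ the tubes and the surfaces are $2$-dimensional in a $4$-manifold, so after a small perturbation they are automatically disjoint, and the issue evaporates. Your conclusion that $\eta_i$ bounds an embedded surface in $S^3\setminus(R\cup\bigcup_{j\neq i}\eta_j)$ is nevertheless correct --- it follows from $[\eta_i]=0$ in $H_1$ of that complement (Poincar\'e duality / map to $S^1$), or from a more careful innermost-pair tubing argument --- but as written the tubing step needs that extra care, which the paper's reordering (push levels first, tube in $4$D second) is designed to avoid. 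If you either cite the standard homological existence of a Seifert surface in the complement, or move the $R$-tubing into $S^3\times[0,1]$ after the level push as the paper does, the argument is complete. The rest of your proof, including the level-pushing and the slice-disk case using a product collar $R\times[0,\epsilon]$, matches the paper's.
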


\begin{proof} For each $\eta_j$, choose a Seifert surface $S_j$ whose interior is disjoint from the other $\eta_i$.  For each $j$, let $F_j\hookrightarrow S^3\times [0,j]$ be the surface bounding $\eta_j$ which consists of the product annulus $\eta_j\times [0,j]$ together with a copy of $S_j\hookrightarrow S^3\times \{j\}$. Since the $S_j$ occur in different levels, these surfaces  in $S^3\times [0,m]$ are disjoint. Moreover, if $i\ne j$ then $F_j$ is disjoint from $\eta_i\times [0,m]$. After a slight adjustment along the annulus part of $F_j$, we can assume it is disjoint from $\eta_j\times [0,m]$ except where they coincide at $\eta_j\times \{0\}$.  After smoothing corners, we may assume that the $F_j$ are transverse to each component of $R\times [0,m]$, hence intersect each component  in pairs of points with opposite signs. Using disjoint arcs in these components as guides, we can alter each $F_j$ by adding tubes to get it disjoint from $R\times [0,m]$. This yields the desired height one grope.
A schematic of the proof is shown in Figure~\ref{figure:doubling-op-proof-1}.
\begin{figure}[h]
\begin{center}
\begin{tikzpicture}
\node[anchor=south west,inner sep=0] at (0,0){\includegraphics[scale=0.35]{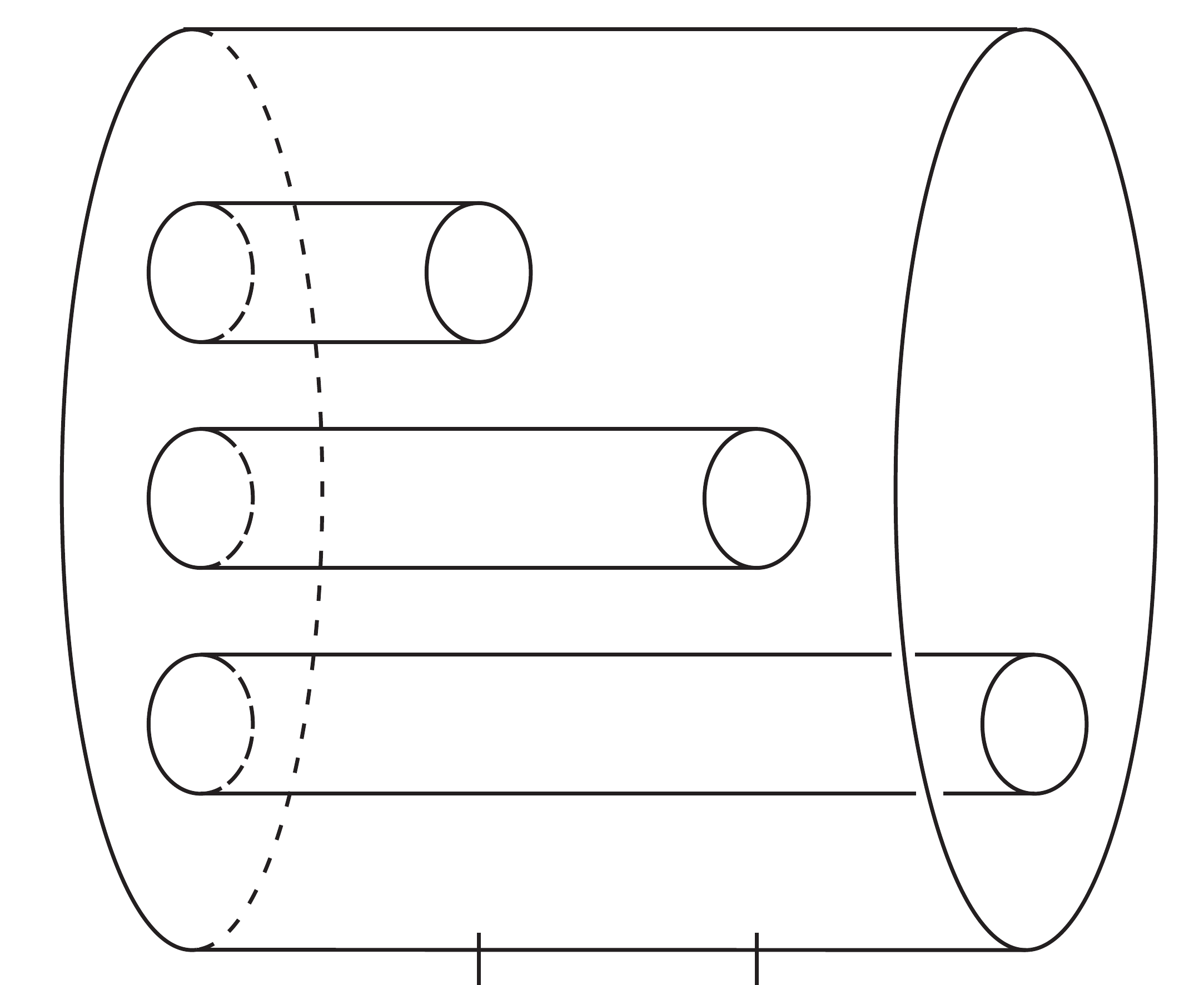}};
\node at (2.9,0)  {$1$};
\node at (4.7,0)  {$2$};
\node at (6.5,0)  {$3$};
\node at (0.75,1.7)  {$\eta_1$};
\node at (0.75,3.1)  {$\eta_2$};
\node at (0.75,4.5)  {$\eta_3$};
\node at (0.6,6)  {$S^3$};
\node at (3.8,6.35)  {$[0,3]$};
\node at (3.1,5.2)  {$S_1$};
\node at (4.85,3.75)  {$S_2$};
\node at (6.7,2.3)  {$S_3$};
\end{tikzpicture}
\end{center}
     \caption{A schematic of the proof for $m=3$.}
    \label{figure:doubling-op-proof-1}
\end{figure}

The case of slice disks is easier and actually follows from the above case by assuming that the slice disks are products $R\times [0,\epsilon]$  near the boundary, and finding the grope in  $R\times [0,\epsilon]$ as above.
\end{proof}

\begin{definition}\label{def:Lipshitz} A map $f\colon(X,d)\to (Y,d')$ between pseudo-metric spaces is  \emph{Lipschitz continuous} if there exists some $\delta\geq 0$, called a \emph{Lipschitz constant} of $f$,  such that $d'(f(x),f(w))\leq \delta d(x,w)$ for all $x,w\in X$.
\end{definition}

\begin{prop}\label{prop:Lipschitz} For any pattern knot $P$  and any $q$, the satellite operator $P \colon (\C,d^q)\to(\C,d^q)$ is Lipschitz continuous with Lipschitz constant equal to the geometric winding number of $P$.
\end{prop}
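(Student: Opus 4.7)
The plan is to invoke the construction in the proof of Proposition~\ref{prop:effectinfections}, stopping at the intermediate grope $G'$ (before any higher grope on $\eta$ is appended). Given $\varepsilon > 0$, let $w$ denote the geometric winding number of $P$; choose a representative of $(P,\eta)$ realizing $w$ (so that $P$ meets the spanning disk $D_1$ in exactly $w$ points), and a branch-symmetric grope concordance $G_L \subset \mathbb{B}\times I$ from $K_0$ to $K_1$ with $\|G_L\|^q < d^q(K_0,K_1) + \varepsilon$. Form
\[
G' := (w)G_L \;\cup\; \bigl((P\cap\mathcal{B})\times I\bigr) \;\subset\; S^3 \times I,
\]
the union of $w$ parallel copies of $G_L$ and the product cobordism on the portion of $P$ outside $\mathbb{B}$, glued along $\partial\mathbb{B}\times I$ via the string link infection identification.

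The first half of the argument in the proof of Proposition~\ref{prop:effectinfections} shows that $G'$ is already a branch-symmetric grope concordance from $P(K_0)$ to $P(K_1)$; no grope $G_\eta$ need be appended, because the tips of $G'$ are parallel copies of the tips of $G_L$, all of which lie in the interior of $\mathbb{B}\times I$ and hence in the interior of $S^3\times I$. Moreover, parts~\eqref{item:effect-infections-A}--\eqref{item:effect-infections-C} of that proposition, specialized to the present setting, give $g_1(G') = w\cdot g_1(G_L)$ and show that the branches of $G'$ are exactly $w$ disjoint copies of each branch of $G_L$, each copy having the same length $n_i$ and the same stage-by-stage genera $g_k^i$ as the branch it came from. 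Substituting directly into equation~\eqref{eq:knot_grope_formula} yields
\[
\|G'\|^q \;=\; w\sum_{i=1}^{g_1(G_L)}\frac{1}{q^{n_i(G_L)}}\left(1 - \sum_{k=2}^{n_i(G_L)+1}\frac{1}{g_k^i(G_L)}\right) \;=\; w\,\|G_L\|^q.
\]
Therefore $d^q(P(K_0), P(K_1)) \leq \|G'\|^q < w\bigl(d^q(K_0,K_1) + \varepsilon\bigr)$, and letting $\varepsilon \to 0$ gives the Lipschitz inequality with constant~$w$.

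The only real subtlety is to confirm that omitting the $G_\eta$ step still produces a bona fide grope concordance, i.e.\ that the tips of $G'$ are in the interior of $S^3\times I$ and are not required to be capped off or to lie on the boundary. This is immediate since the higher-stage surfaces of $G'$ are merely parallel copies of those of $G_L$ and inherit their interior tips. After that observation, the argument is just a branch-by-branch tally, and the factor of $w$ in the Lipschitz constant arises directly from the $w$ parallel copies produced by the infection.
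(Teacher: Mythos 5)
Your proof is correct and follows essentially the same route as the paper: both take $\gw(P)$ parallel copies of a grope concordance $\Sigma$ (your $G_L$), band them together with the product $(P\cap\mathcal{B})\times I$, observe that this $G'$ is already a grope concordance from $P(K_0)$ to $P(K_1)$ with first-stage genus multiplied by $\gw(P)$ and all branch data simply duplicated, and conclude $\|G'\|^q=\gw(P)\|\Sigma\|^q$. The only minor imprecision is in attributing the structure of $G'$ to ``parts (A)--(C) of Proposition~\ref{prop:effectinfections}'' verbatim: parts (B) and (C) there describe the branches of the \emph{final} grope $G_{R(L)}$ after the $G_\eta$ pieces are appended, whereas what you actually use is only the intermediate construction (the grope $G'$) and the fact that its branches are disjoint parallel copies of those of $G_L$; the paper avoids this wrinkle by saying it ``repeats the first part of the proof'' rather than citing the numbered conclusions.
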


\begin{proof} Let $\gw(P)$ be the geometric winding number of $P$. Suppose that $K$ and $J$ are arbitrary knots and $\Sigma$ is any grope concordance between them. From $\Sigma$ we will construct a grope concordance $G$ between $P(K)$ and $P(J)$. It will suffice to show that $\|G\|^q\leq \gw(P)\|\Sigma\|^q$ since this implies that $d^q(P(K),P(J))\leq \gw(P) d^q(K,J)$,  which implies that $P$ is Lipschitz.

To construct $G$, we essentially repeat the first part of the proof of Proposition~\ref{prop:effectinfections}. In this simple case, a satellite operation is a string link infection where $\mathbb{B}$ is simply a thickening of the meridional disk of the solid torus in which $P$ lies. By hypothesis the knotted arc $K\hookrightarrow \mathbb{B}\times  \{0\}$ is grope cobordant via $\Sigma$, in $\mathbb{B}\times [0,1]$ to the knotted arc $J$ in $\mathbb{B}\times \{1\}$.  Taking $\gw(P)$ parallel copies of $\Sigma$ and changing some orientations, we see that the string link $\gw(P)K$ is grope cobordant in $\mathbb{B}\times [0,1]$ to $\gw(P)J$ via a grope that we will call $\gw(P)\Sigma$.   Let $\mathcal{B}$ denote the complementary $3$-ball to $\mathbb{D}$. Now define a grope concordance $G$ from $P(K)$ to $P(J)$ by
$$
G\equiv \gw(P)\Sigma \cup \left ((P\cap \mathcal{B}) \times [0,1]\right)\hookrightarrow \left(\mathbb{B}\times [0,1]\right)\cup \left(\mathcal{B}\times [0,1]\right)\equiv S^3\times [0,1].
$$
 The grope $G$ is composed of $\gw(P)$ copies of $\Sigma$ (some with altered orientation) banded together along their boundaries. Thus the genus of the first stage surface of $G$ is $\gw(P)$ times that of $\Sigma$ and each branch of $\Sigma$ is repeated $\gw(P)$ times.
Thus, for any $q$,
$$
\|G\|^q=~\gw(P)~\|\Sigma\|^q,
$$
as desired.
\end{proof}

\noindent A special case of Lipschitz continuity is the following.

\begin{definition}\label{def:contraction}
A map $f\colon (X,d)\to (Y,d')$ between pseudo-metric spaces is called a \emph{contraction mapping} if there exists some $0\leq \delta<1$ such that $d'(f(x),f(w))\leq \delta d(x,w)$ for all $x,w\in X$.
\end{definition}

\begin{prop}\label{prop:contraction} For any winding number zero satellite operator $R(-,\eta)$ there is an $N$, depending only on the geometric winding numbers of $R(-,\eta)$, such that for each $q>N$, $R\colon (\C^m_{SL},d^q)\to(\C^\ell,d^q)$ is a contraction mapping. In particular, for any winding number zero pattern knot $P$, and any $q>\gw(P)$, the satellite operator $P\colon (\C,d^q)\to(\C,d^q)$ is a contraction mapping.
\end{prop}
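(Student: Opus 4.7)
The plan is to follow the strategy of Proposition~\ref{prop:Lipschitz}, but to leverage the winding-zero hypothesis in order to pick up an extra factor of $1/q$ per branch beyond the mere multiplication by geometric winding numbers. The starting observation is that winding number zero means $\ell k(\eta_i, R_k) = 0$ for every $i,k$, and the $\eta_i$ already form an unlink in $S^3 \smallsetminus R$; hence Lemma~\ref{lem:doublingbounds} applies and $\eta$ bounds a symmetric grope of height $h = 1$ in the exterior of $R$. This is precisely the hypothesis needed to invoke Proposition~\ref{prop:effectinfections} with $h = 1$.

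Given any branch-symmetric grope concordance $G_L$ between string links $L_0$ and $L_1$, I would then feed it into Proposition~\ref{prop:effectinfections} to obtain a branch-symmetric grope concordance $G_{R(L)}$ between $R(L_0)$ and $R(L_1)$. The combinatorial data provided by parts~\eqref{item:effect-infections-A}--\eqref{item:effect-infections-C} of that proposition say that each branch $B$ of length $n$ in the $j$-th component of $G_L$ is replaced by $w_j$ branches, each of length $n+1$, whose first $n+1$ stage surfaces are ambient parallel copies of those of $B$ (so the genera $g_k^i$ are preserved for $k \leq n+1$) and whose new top stage at level $n+2$ consists of copies of components of the height-one grope bounded by $\eta$.

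The key inequality is then a direct branchwise comparison. The contribution of $B$ to $\|G_L\|^q$ is $\frac{1}{q^n}\bigl(1 - \sum_{k=2}^{n+1} 1/g_k^i\bigr)$, while each of its $w_j$ replicas contributes
$$\frac{1}{q^{n+1}}\Bigl(1 - \sum_{k=2}^{n+1} \tfrac{1}{g_k^i} - \tfrac{1}{g_{n+2}^{\mathrm{new}}}\Bigr) \leq \frac{1}{q}\cdot \frac{1}{q^{n}}\Bigl(1 - \sum_{k=2}^{n+1} \tfrac{1}{g_k^i}\Bigr)$$
to $\|G_{R(L)}\|^q$. Hence the $w_j$ replicas together account for at most $(w_j/q)$ times $B$'s contribution. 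Summing over all branches and all components produces
$$\|G_{R(L)}\|^q \;\leq\; \frac{N}{q}\,\|G_L\|^q, \qquad N := \max_j w_j,$$
which depends only on the geometric winding numbers. Taking the infimum over all $G_L$ yields $d^q(R(L_0), R(L_1)) \leq (N/q)\, d^q(L_0, L_1)$, a contraction for all $q > N$. The knot case follows on setting $m = 1$, since then $N = \gw(P)$.

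The main obstacle will be the careful bookkeeping in the branchwise comparison, particularly handling the corner case of length-zero branches: these contribute $1$ to $\|G_L\|^q$ and become length-one branches contributing $\frac{1}{q}(1 - 1/g_2^{\mathrm{new}}) \leq 1/q$ per replica, so the bound $w_j/q$ still holds. One must also verify that the parallel-copy construction in the proof of Proposition~\ref{prop:effectinfections} truly preserves all intermediate stage genera of $B$ (it does, since parallel copies are ambient isotopic), and that the infima behave well under the grope-to-grope assignment $G_L \mapsto G_{R(L)}$ (which is straightforward because the inequality holds for each $G_L$ individually).
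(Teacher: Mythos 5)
Your proposal is correct and follows essentially the same route as the paper's proof: Lemma~\ref{lem:doublingbounds} supplies the height-one grope for the unlink $\eta$ in the exterior of $R$, Proposition~\ref{prop:effectinfections} with $h=1$ gives the combinatorial control on the resulting grope concordance $G_{R(L)}$, and the key estimate is the branchwise comparison in which the new length-$(n+1)$ branch contributes at most $1/q$ times what the length-$n$ branch contributed, after discarding the nonnegative term $1/g_{n+2}^{\mathrm{new}}$. The paper spells out the bookkeeping with an explicit double sum over components $s$ of $R$ and $j$ of $L$ (tracking $w_j^s$ separately before collapsing to $w_j = \sum_s w_j^s$), and also splits off the trivial case $N=0$ in which $R(L)$ is independent of $L$; you aggregate directly in terms of $w_j$ and treat the length-zero-branch corner case explicitly instead, but the content is the same and your bound $\|G_{R(L)}\|^q \le (N/q)\|G_L\|^q$ with $N=\max_j w_j$ matches the paper's.
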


This gives further evidence that $\mathcal{C}$ has the structure of a fractal space as conjectured in \cite{CHL5}.

\begin{proof} Suppose that $R=(R_1,\dots,R_\ell)$ and $\eta=(\eta_1,\dots,\eta_m)$. Let $w_j^s$, for $1\leq s\leq \ell$ and $1\leq j\leq m$, be the number of strands of $R_s$ that ``pass through'' $\eta_j$. Let $N=\max_j\{\sum_{s=1}^\ell w_j^s\} = \max_j \{w_j\}$. Fix any $q>N$. We will show that
$R\colon(\C^m,d^q)\to(\C^\ell,d^q)$ is a contraction mapping; here we omit $\eta$ from the notation of the operator for brevity.

In the case that $N=0$ then all $w_j^s=0$ so, for any $L$, $R(L)=R$. Thus $d^q(R(L_0),R(L_1))=0$ for all $L_0, L_1$, so $R$ is a contraction mapping  for $\delta=0$.

Henceforth we assume that $N>0$.  Let $\delta=\frac{N}{q}$, so $0\leq \delta<1$. We will show that, for all string links $L_0, L_1$,
\begin{equation}\label{eq:contract}
d^q(R(L_0),R(L_1))\leq \delta d^q(L_0,L_1).
\end{equation}
Since $R(-,\eta)$ has winding number zero,  Lemma~\ref{lem:doublingbounds} ensures that the link $(\eta_1,\dots,\eta_m)$ bounds a symmetric grope $G_\eta$ of height $1$ in  $(S^3-(R \cup \mathbb{B}))\times [0,1]$.  Suppose that $L_0$ is grope cobordant to $L_1$ via a grope $\Sigma$.  Recall that
$$\|\Sigma\|^q=\sum_{j=1}^m\sum_{i=1}^{g_1(\Sigma_j)}\frac{1}{q^{n_{ji}}} \bigg( 1- \sum_{k=2}^{n_{ji}+1} \frac{1}{g_k^{ji}}\bigg)=\sum_{j=1}^m\|\Sigma_j\|^q$$
Let $G$ denote the grope concordance from $R(L_0)$ and $R(L_1)$ constructed in Proposition~\ref{prop:effectinfections}. Hence
$$
g_1(G_s)=\sum_{j=1}^mw_j^sg_1(\Sigma_j);  ~1\leq s\leq \ell.
$$
Moreover $G_s$ is formed from $w_j^s$ parallel copies of $\Sigma_j$ with copies of the surfaces $G_\eta$ attached to all tips (and sum over $j$). The key point is that, for fixed  $j$ the topology of all of these copies is the same, independent of $s$. Thus
\begin{align*}
d^q(R(L_0),R(L_1))\leq & \|G\|^q= \sum_{s=1}^\ell\|G_s\|^q= \sum_{s=1}^\ell\sum_{j=1}^mw_j^s\|\Sigma_j\cup \text{copies } G_\eta\|^q= \\
&\sum_{s=1}^\ell\sum_{j=1}^mw_j^s\sum_{i=1}^{g_1(\Sigma_j)}\frac{1}{q^{n_{ji}+1}} \bigg( 1- \sum_{k=2}^{n_{ji}+1} \frac{1}{g_k^{ji}}-\frac{1}{g_{n_{ji}+2}^{ji}}\bigg).
\end{align*}
Now if we just ignore the last terms arising from the final stage surfaces, we can continue with:
\begin{align*}
\leq & \sum_{s=1}^\ell\sum_{j=1}^mw_j^s\sum_{i=1}^{g_1(\Sigma_j)}\frac{1}{q^{n_{ji}+1}} \bigg( 1- \sum_{k=2}^{n_{ji}+1} \frac{1}{g_k^{ji}}\bigg) \\
= & \sum_{s=1}^\ell\sum_{j=1}^m \frac{w_j^s}{q}\|\Sigma_j\|^q=\frac{1}{q} \sum_{j=1}^m \|\Sigma_j\|^q\bigg(\sum_{s=1}^\ell w_j^s\bigg) \\
\leq & \frac{N}{q}\|\Sigma\|^q=\delta\|\Sigma\|^q.
\end{align*}

Hence $d^q(R(L_0),R(L_1))\leq \delta\|\Sigma\|^q.$
Since this is true for \textit{any} grope concordance $\Sigma$ from $L_0$ to $L_1$, this establishes inequality~(\ref{eq:contract}).

In particular, any pattern knot $P$ with winding number zero  is an example of such a string link operator with $\ell=m=1$ and $N=\gw(P)$.
\end{proof}

\section{Examples exhibiting the non-discrete behaviour}\label{section:examples-for-non-discrete}

\begin{proposition}\label{ex:specificdecreasing}
For each $m\geq 3$, there exists a family of knots, $\{K_n^m~|~n\geq 0\}$, such that $K_n^m$ bounds a symmetric height $n+2$ grope, whose first stage has genus~$2^{n}$ and whose higher stages have genus one. Moreover $K_n\notin \mathcal{G}_{n+3}$.
\end{proposition}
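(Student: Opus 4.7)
The plan is to construct $K_n^m$ by iterating a winding-number-zero doubling satellite operator $P$ on a suitable seed knot $J^m$, verify the grope structure via Proposition~\ref{prop:effectinfections}, and prove non-$(n+1)$-solvability via higher-order $L^{(2)}$-$\rho$-invariants in the style of Cochran-Harvey-Leidy.

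For the construction, fix a pattern knot $P$ that is unknotted in $S^3$, whose satellite operator has algebraic winding number zero and geometric winding number $2$, and that is a \emph{robust doubling operator} in the sense that the infection curve $\eta$ lies deep in the derived series of the pattern-knot exterior. For each $m\geq 3$, choose a seed knot $J^m$ of Arf invariant zero that bounds a symmetric height-$2$ grope in $B^4$ whose first stage has genus one and whose two second-stage surfaces each have genus one; the index $m$ selects among seeds with pairwise distinct, nonzero integrated Levine-Tristram signatures $\int_{S^1}\sigma_{J^m}(\omega)\,d\omega$ (for instance, connected sums involving $9_{46}$ and varying numbers of untwisted Whitehead doubles of the trefoil). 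Set $K_0^m := J^m$ and $K_{n+1}^m := P(K_n^m)$.

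The upper bound is established by induction on $n$. The base case is immediate from the choice of $J^m$. For the inductive step, Lemma~\ref{lem:doublingbounds} supplies a height-one genus-one symmetric grope $G_\eta$ for $\eta$ in the exterior of a slice disk for $P$, and Proposition~\ref{prop:effectinfections} (invoking its final special case to append the slice disk) applied with $h=1$, $L_0 = K_n^m$, $L_1 = $ unknot, and $R = P$, produces a grope for $K_{n+1}^m$ in $B^4$. Item~(\ref{item:effect-infections-A}) yields first-stage genus $2 \cdot 2^n = 2^{n+1}$; item~(\ref{item:effect-infections-B}) extends each branch length by one to $n+2$, so the total height is $n+3$; and item~(\ref{item:effect-infections-C}) shows each newly attached top surface is a single copy of the genus-one second-stage surface of $G_\eta$, provided the caps chosen for $\Sigma_n$ meet the lower knot in exactly one meridian apiece. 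The latter cap-multiplicity condition is ensured by a preliminary grope-splitting maneuver inside $\Sigma_n$ (cf.\ the grope-splitting remark following Definition~\ref{defn:grope-norm}), with any incidental increase in first-stage genus absorbed into the choice of seed.

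The lower bound $K_n^m \notin \mathcal{G}_{n+3}$ reduces, via Theorem~\ref{thm:height-n-plus-two-grope-implies-n-solvable}, to proving that $K_n^m$ is not $(n+1)$-solvable, and this is the principal technical obstacle. The argument is a now-standard iterated $L^{(2)}$-signature calculation: if $K_n^m$ bounded an $(n+1)$-solution $W$, then passing through the tower of iterated derived covers of $W$ dictated by the $n$ applications of $P$ and applying the Cheeger-Gromov $L^{(2)}$-signature theorem would force a specific $(n+1)$-st-order von Neumann $\rho$-invariant of $K_n^m$ to vanish. However, the robustness of $P$ guarantees that this $\rho$-invariant equals the integrated Levine-Tristram signature $\int_{S^1}\sigma_{J^m}(\omega)\,d\omega$ of the seed up to a universally bounded error. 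Choosing the seeds so that their integrated signatures exceed this error and are pairwise distinct simultaneously gives the required non-$(n+1)$-solvability and the concordance-theoretic distinctness of the sequences $\{K_n^m\}_n$ for different~$m$. The details are closely parallel to the calculations carried out in Section~\ref{section:higher-order-rho-invariants}.
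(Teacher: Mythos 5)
Your proposal reverses the paper's division of labour (you fix the operator and vary the seed with $m$, whereas the paper fixes the seed $K_0$ and varies the operator $R^m$), which could in principle be made to work, but as written there are two genuine gaps.

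First, the upper bound is not established. The entire point of obtaining first stage genus exactly $2^n$ with \emph{all} higher stages genus one is that, at each iteration, Proposition~\ref{prop:effectinfections}(\ref{item:effect-infections-C}) attaches exactly one genus-one copy of a grope $G_\eta$ to each tip, which requires both that $G_\eta$ itself be genus one and height one, and that every cap of the inductively constructed grope have multiplicity one. You invoke Lemma~\ref{lem:doublingbounds}, but that lemma only asserts existence of \emph{some} height-one grope in the slice-disk exterior; it gives no control on its genus and says nothing at all about cap multiplicities. The paper replaces it with the much sharper Lemma~\ref{lem:goodeta}, which produces a curve $\eta$ generating the rational Alexander module and bounding a \emph{genus-one} punctured torus whose two symplectic basis curves each bound caps meeting $R$ in exactly one point (cap multiplicity one); this, plus the inductive hypothesis that all caps of the grope for $K_{n-1}$ have multiplicity one (with the base case from~\cite[Figure 3.13]{CT}), is what closes the induction. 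Your substitute--a ``preliminary grope-splitting maneuver'' whose genus increase is ``absorbed into the choice of seed''--does not work: grope splitting increases the first stage genus, and since any such fix would have to be applied at \emph{each} of the $n$ iterations, the growth compounds and you no longer get $2^n$; and in any case grope splitting does not produce caps of multiplicity one. Without an analogue of Lemma~\ref{lem:goodeta} you cannot get the stated genus bounds.

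Second, your lower bound does not account for the hypothesis $m\geq 3$. In the paper's construction $m$ is the number of full twists in the ribbon knot $R^m$, and the condition $m\geq 3$ is exactly what guarantees $\rho^1(R^m)<0$ (citing~\cite{CDavisThesis}), which together with $\rho_0(K_0)<0$ makes the CHL5 obstruction equation $d\rho_0(K_0)+\rho(R^m,\phi_P)=0$ unsolvable. In your framework the operator $P$ is a single fixed object and $m$ only indexes seeds, so the robustness of $P$ is a one-time fact and there is no role for $m\geq 3$ in the non-solvability argument--yet the proposition asserts this restriction. You would need either to replace the robustness hypothesis on the fixed $P$ with a quantitative condition on the seeds $J^m$ that explains $m\geq 3$, or to follow the paper and let the operator carry the $m$-dependence. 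As written the lower-bound sketch (``equals the integrated Levine--Tristram signature of the seed up to a universally bounded error'') also misstates the mechanism: the CHL machinery does not produce a bounded-error identity; it rules out solutions to a specific linear equation in the first-order signatures of the operator and $\rho_0$ of the seed.
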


Note that Proposition~\ref{ex:specificdecreasing} completes the proof of Theorem~\ref{cor:notdiscrete}.

\begin{proof}
 First, for any fixed $m$, we will recursively define knots $K_n^m$, each of which bounds a symmetric  grope of height $n+2$.  Let $K_0=K_0^m$ be  the mirror image of the knot (independent of $m$) given in ~\cite[Figure 3.6]{CT}, that bounds a symmetric height $2$ grope whose first and second stage surfaces each have genus one. The other relevant property of $K_0$ is that $\rho_0(K_0)$, the integral of its Levine-Tristram signature function, is negative, as shown in  ~\cite[Lemma 4.5]{CT}. Let $R^m$ be the ribbon knot shown on the left-hand side of Figure~\ref{fig:examplesKn}. The $-m$ in the box indicates $m$ full left-handed twists between the bands.  Below the dotted $\eta$ circle on the left of Figure~\ref{fig:examplesKn} one sees what we mean by half a negative twist.  Let $\eta_m$ be an oriented unknotted circle which has linking number zero with $R^m$. An example of a curve with this property is shown as dashed in the figure. The actual $\eta_m$ we will use will be described presently. Then let $K_n^m\equiv R^m(K_{n-1}^m)$ as shown on the right-hand side of Figure~\ref{fig:examplesKn}, for the particular choice of $\eta$ in the left-hand diagram.
 \begin{figure}[htbp]
\setlength{\unitlength}{1pt}
\begin{picture}(327,151)
\put(-15,10){\includegraphics{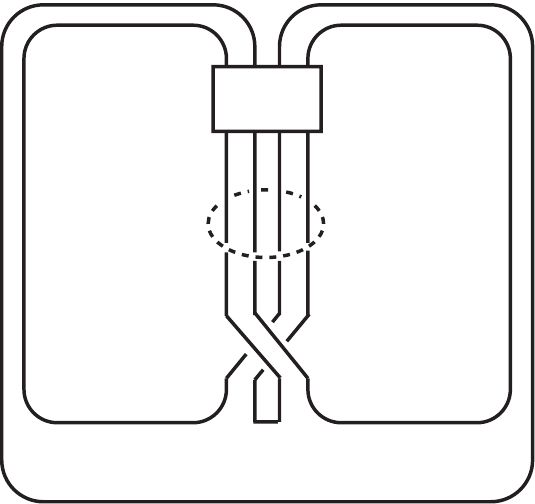}}
\put(198,10){\includegraphics{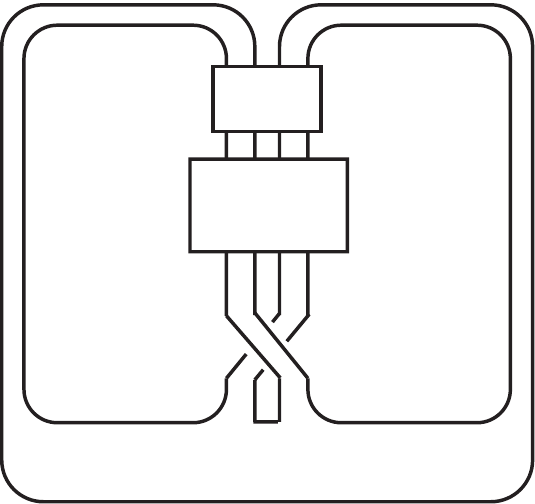}}
\put(51,123){$-m$}
\put(87,84){$\eta$}
\put(262,92){$K_{n-1}^m$}
\put(264,123){$-m$}
\put(233,-5){$K_n^m\equiv R^m(K_{n-1}^m)$}
\put(50,-5){$(R^m,\eta)$}
\end{picture}
\caption{}\label{fig:examplesKn}
\end{figure}
Henceforth we will suppress $m$ from the notation until it becomes relevant. For any such $\eta$, the pair $(R,\eta)$ implicitly symbolizes a winding number zero pattern, since the exterior of a neighborhood of $\eta$ is a solid torus and $R$ gives a pattern knot inside this solid torus, which, by abuse of notation, we will also call $R$. Thus $R(-,\eta)$ defines a doubling operator. Since $K_0\in \mathcal{G}_2$, by repeatedly applying Corollary~\ref{cor:doublingincreaselength}, we see that $K_n\in \mathcal{G}_{n+2}$. By the second sentence of Proposition~\ref{prop:contraction}, for any $q>\gw(R)$,~$R$ is a contraction operator, so the sequence $K_n$ converges to the class of the trivial knot in $(\C,d^q)$. Below we will show that, for certain choices of $\eta$, $K_n\notin \mathcal{G}_{n+3}$ which implies that each knot in the sequence represents a distinct concordance class. This will be enough to show that $(\mathcal{C},d^q)$ does not have the discrete topology for any $q>\gw(R)$.

However, to prove Corollary~\ref{cor:notdiscrete} for all $q>1$ we must choose $\eta_m$ very carefully and prove that $K_n$ bounds a symmetric height $n+2$ grope whose first stage has genus $2^{n}$ and whose higher stages have genus one. For this we must  look more carefully at the gropes constructed in the proof of Proposition~\ref{prop:effectinfections}. In particular we are in the special case covered in the last paragraph of the statement of that proposition.  The existence of the $\eta_m$ we require is guaranteed by the following lemma.

\begin{lemma}\label{lem:goodeta} Let $R$ be a knot with cyclic rational Alexander module. Then there exist a curve $\eta$ with the following properties:
\begin{enumerate}[(1)]
\item  $\eta$ bounds an embedded disk in $S^3$ that intersects $R$ transversely in two points with opposite signs;
\item  $\eta$ generates the rational Alexander module of $R$;
\item $\eta$ bounds, in the exterior of $R$, an embedded genus one surface with symplectic basis $x,y$, each of which bounds a cap in $S^3$ that intersects $R$ precisely once.
\end{enumerate}
\end{lemma}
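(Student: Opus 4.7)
My plan is to construct $\eta$ explicitly as a band sum of two nearby meridians of $R$ with opposite orientations, connected by a band that crosses a chosen Seifert surface of $R$ algebraically once. Specifically, I fix a Seifert surface $F_R$ for $R$, choose nearby points $p_1, p_2 \in R$ with meridians $\mu_i$ at $p_i$, and connect $-\mu_1$ and $\mu_2$ by an embedded band $b \subset S^3 \setminus R$ whose core arc crosses $F_R$ transversely in exactly one point. Such a band exists since any initial connecting arc can be modified by finger moves to achieve any desired algebraic crossing count with $F_R$. Define $\eta := (-\mu_1) \#_b \mu_2$.

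\textbf{Properties (1) and (2).} For (1), the disk $D := D_{\mu_1} \cup b \cup D_{\mu_2}$ bounded by $\eta$, formed by attaching the two meridional disks to the ends of $b$, is embedded in $S^3$ and meets $R$ transversely at $p_1, p_2$ with opposite signs. For (2), since $\eta$ is null-homologous in $X = S^3 \setminus \nu R$, it lifts to a closed curve $\widetilde\eta$ in the infinite cyclic cover $\widetilde X$. A direct computation, tracking how $D$ lifts to $\widetilde X$ with the two meridional disks landing in adjacent $t$-translates (because $b$ crosses $F_R$ exactly once), shows that $[\widetilde\eta]$ equals $(t-1)$ times a generator of the rational Alexander module $\mathcal{A}_R^{\mathbb{Q}} := H_1(\widetilde X; \mathbb{Q})$. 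Since $\Delta_R(1) = \pm 1$ for any knot implies $t - 1$ is a unit in $\mathbb{Q}[t, t^{-1}]/(\Delta_R(t))$, the class $[\widetilde\eta]$ itself generates the cyclic module $\mathcal{A}_R^{\mathbb{Q}}$.

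\textbf{Property (3).} Let $\alpha \subset R$ be the short arc from $p_1$ to $p_2$. Perform ambient symmetric surgery on $D$ along $\alpha$: remove small open disks $D_i' \subset D$ around $p_i$ and glue in the tubular annulus $T = \partial\nu(\alpha) \cap \partial\nu(R)$. Since $p_1, p_2$ have opposite signs, the result is an embedded genus-one surface $F = (D \setminus (D_1' \cup D_2')) \cup T \subset S^3 \setminus R$ with $\partial F = \eta$. For the symplectic basis $\{x, y\}$ of $H_1(F) \cong \mathbb{Z}^2$: take $x$ to be a meridional core circle of $T$ at a midpoint $p \in \alpha$ (literally a meridian of $R$), and $y = \gamma_1 \cup \gamma_2$ where $\gamma_1$ is a longitudinal arc on $T$ parallel to $\alpha$ and $\gamma_2$ is a closing arc in $D \setminus (D_1' \cup D_2')$ chosen so that $\gamma_2$ crosses $F_R \cap D$ exactly once. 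Then $x \cdot y = 1$ on $F$ and $|\mathrm{lk}(y, R)| = 1$. The cap for $x$ is the meridional disk of $R$ at $p$, meeting $R$ once. For $y$, I would exhibit an isotopy in $S^3$ taking $y$ to a small local meridian of $R$ at the single $F_R$-crossing point of $\gamma_2$; this isotopy cancels $\gamma_1$ against an arc of $\gamma_2$ that runs back parallel to the knot, leaving a meridional loop whose bounding disk meets $R$ exactly once.

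The main obstacle I anticipate is justifying the isotopy claim in the cap construction for $y$. This requires a careful local analysis of how the band $b$, the closing arc $\gamma_2$, and the Seifert surface $F_R$ are arranged near $\alpha$: one needs to verify that the apparent winding of $y$ around $R$ is concentrated at a single local point and that the remaining portion of the configuration is genuinely null-homotopic, so that an explicit ambient isotopy to a standard meridian can be written down.
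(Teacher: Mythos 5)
The argument for property (2) has a genuine gap. Writing the completed band core as $g = \mu^{\pm 1} h$ with $h \in \pi_1(S^3 \setminus R)^{(1)}$, the curve $\eta = (-\mu_1)\#_b\mu_2$ represents a commutator $[\mu^{\mp 1}, g]$ in $\pi_1(S^3\setminus R)$, and a short computation in $\pi_1^{(1)}/\pi_1^{(2)}$ shows that its class in the rational Alexander module is, up to a unit, $(1-t)[h]$. The algebraic crossing number of $b$ with $F_R$ controls only the $\mu$-exponent in $g$; it places no constraint whatsoever on $[h]$, which can be \emph{any} class in the module. In particular, for a band that is unknotted relative to $R$ (perfectly compatible with crossing $F_R$ once), $[h] = 0$, so $[\widetilde\eta] = 0$ and $\eta$ does not generate. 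Your ``direct computation'' with the two meridional disks landing in adjacent $t$-translates only records linking data and cannot see $[h]$. The paper's proof closes exactly this gap by working in the opposite direction: fix a generator $\alpha$ of the rational Alexander module, solve $\alpha = (t + t^{-1} - 2)\beta$ for $\beta$ (possible because $t-1$ is coprime to $\Delta_R$), represent $\beta$ by a loop $b \in \pi_1(S^3\setminus R)^{(1)}$, and only \emph{then} define $\eta$ as the boundary of a thickened wedge $x \vee y$ with $x = \mu$ and $y = b\mu^{-1}b^{-1}$, so that $[\widetilde\eta] = (t + t^{-1} - 2)\beta = \alpha$ by construction.

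This same pre-selection of $b$ is what makes property (3) transparent --- the step you already flag as your main anticipated obstacle. In the paper's construction $x = \mu$ and $y = b\mu^{-1}b^{-1}$ are conjugates of meridians from the outset, so their caps are a fixed meridian disk $D_1$ and that same disk dragged along $b$; no isotopy argument is needed, and property (1) then follows by surgering the thickened wedge along one cap. In your construction the curve $y$ is essentially the completed band core, and for the kind of band required to make $[h]$ a generator there is no reason for $y$ to bound an embedded disk in $S^3$ meeting $R$ exactly once; $y$ need not even be unknotted. Repairing this would force you to pre-choose $b$ so that $y$ is a meridian conjugate with $[b] = \beta$, at which point the argument is the paper's.
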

\begin{proof}[Proof of Lemma~\ref{lem:goodeta}] The rational Alexander module of $R^m$, henceforth denoted by $\mathcal{A}$, is cyclic with order $(mt-(m+1)((m+1)t-m)$. Let  $\alpha$ be a generator. Since $\Q[t,t^{-1}]$ is a PID and since $t^1+t^{-1}-2$ is relatively prime to the order of $\mathcal{A}$ ($t^{\pm 1} -1$ cannot be a factor of any Alexander polynomial of a knot), there is a class $\beta\in \mathcal{A}$ such that
\begin{equation}\label{eq:eta1}
\alpha=(t_*^1-1+t_*^{-1}-1)\beta.
\end{equation}
After possibly by multiplying $\alpha$ and $\beta$  by the same positive integer, we may assume without loss that $\alpha$ and $\beta$ come from the integral Alexander module and hence are represented by homotopy classes $\tilde{\eta}$ and $b$ in $\pi_1(S^3-R^m)^{(1)}$, and that equation~\eqref{eq:eta1} holds in the integral Alexander module. The desired circle $\eta$ will be a particular representative of the homotopy class $\tilde{\eta}$. By ~\eqref{eq:eta1} we have the following statement in $\pi_1(S^3-R)$:
$$
\tilde{\eta}\gamma=(\mu b\mu^{-1})b^{-1}(\mu^{-1}b\mu)b^{-1}=[\mu, b\mu^{-1}b^{-1}],
$$
where $\mu$ is the homotopy class of a fixed meridian of $R^m$ and $\gamma\in \pi_1(S^3-R)^{(2)}$. Redefine $\tilde{\eta}=\tilde{\eta}\gamma$ since they both represent $\alpha$. Now the proof follows that of~\cite[Lemma 3.9]{CT}. Of course $\mu$ is represented by an actual geometric meridian, by which we mean  an oriented circle in $S^3- R$ that bounds an embedded disk $D_1$, that hits $R$ in only one point and which contains the basepoint in its boundary. We can represent $b\mu^{-1}b^{-1}$ by another such meridian, that is there is an embedded disk, $D_2$, whose boundary represents this class, which intersects $R$ in one point, and which intersects $D_1$ only at the basepoint. These two disks will be the caps of the punctured torus we now define. The circles $x\vee y\equiv\partial D_1\cup \partial D_2$ form an embedded wedge of circles. We can thicken each circle to get two plumbed annuli, forming an embedded punctured torus, $G_\eta$, whose boundary realizes the commutator $\tilde{\eta}$ (see ~\cite[Lemma 3.9]{CT} for a figure and more details). Let $\eta$ be the boundary of $G_\eta$. Then $\eta$ satisfies $2.$ and $3$. Moreover the torus can be surgered along either $D_1$ or $D_2$ showing that $\eta$ also bounds an embedded disk that hits $R$ in two points.  This completes the proof of Lemma~\ref{lem:goodeta}.
\end{proof}

Now assume that $\eta$ satisfies the properties in Lemma~\ref{lem:goodeta}.  Thus $\eta$ bounds a genus one height one grope $G_\eta$ in the exterior of a slice disk $\Delta$ for $R$. Moreover this grope is capped by disks $D_1$, $D_2$, each of which intersects $\Delta$ in single point. Now suppose, inductively, that $K_{n-1}$  bounds a symmetric height $(n+1)$ grope $G$ whose first stage has genus $2^{n-1}$,  whose higher stage surfaces all have genus one and moreover where each tip of $G$ has a cap with multiplicity $1$.  Thus the punctured caps are merely ``pushing'' annuli. We will apply the last paragraph of Proposition~\ref{prop:effectinfections} with $h=1$.   By part (\ref{item:effect-infections-A}) of Proposition~\ref{prop:effectinfections}, $R(K_{n-1})$  bounds a  grope $\Sigma$ whose first stage has genus twice that of  $G$, namely $2^n$. Moreover, by part (\ref{item:effect-infections-B}) of Proposition~\ref{prop:effectinfections},  the union of all except the top stage surfaces of $\Sigma$ consists of a boundary connected sum of two parallel copies of $G$, so all of the surfaces in stages $2$ through $n+1$ are genus one surfaces.   By part (\ref{item:effect-infections-C}) of Proposition~\ref{prop:effectinfections}, the $(n+2)^{th}$-stage of $\Sigma$ consists of these pushing annuli together with one copy of the punctured torus $G_\eta$ per annulus. Thus  $K_{n}$  bounds a symmetric height $n+2$ grope $\Sigma$  whose first stage has genus $2^{n}$ and  whose higher stage surfaces all have genus one. Moreover each tip of $\Sigma$ is capped by a copy of $D_1$ or $D_2$ which have multiplicity one. This finishes the inductive step of the proof that $K_{n}$  bounds a symmetric height $n+2$ grope $\Sigma$  whose first stage has genus $2^{n}$ and  whose higher stage surfaces all have genus one. The base case, namely that $K_0$  bounds a symmetric height $2$ grope whose first stage has genus $2^{0}$,  whose second stage surfaces  have genus one and where each tip  has a cap with multiplicity $1$, was shown in ~\cite[Figure 3.13]{CT}.

It only remains to show that, if $m\geq 3$, $K_n^m\notin \mathcal{G}_{n+3}$.  First, we claim that each $(R^m,\eta_m)$ is a \textit{robust doubling operator} in the sense of ~\cite[Def. 7.2]{CHL5}.
 This requires that $\mathcal{A}(R^m)$ is cyclic with order $p(t)p(t^{-1})$ where $p(t)$ is prime, generated by $\eta$.  We have these properties. Moreover we must verify that,  for each isotropic submodule $P\subset \mathcal{A}(R^m)$, either $P$ is a Lagrangian arising from the kernel of the inclusion to a ribbon disk exterior, or else the corresponding \textit{first-order $L^{(2)}$-signature} of $R^m$, $\rho(M_R, \phi_P)$  is non-zero ~\cite[Def. 7.1]{CHL5}.  In our case $R^m$ has two Lagrangian subspaces corresponding to two ribbon disks and only one other isotropic submodule, namely $P=0$. In this case the corresponding first-order signature is denoted $\rho^1(R^m)$. The details to justify these statements are in ~\cite[Example 7.3]{CHL5}. The fact that if $m\geq 3$ then $\rho^1(R^m)<0$ is shown in  ~\cite[Theorem~7.2.1]{CDavisThesis}.   Thus $K_n^m$ is the result of applying $n$ successive robust doubling operators to the Arf invariant zero knot $K_0$.   Hence  ~\cite[Theorem 7.5]{CHL5} (or more precisely its proof) can be applied to $K_n^m$ to deduce that no positive multiple $cK^m_n$ lies in $\mathcal{F}_{(n.5)}$ as long as the equation $d\rho_0(K_0)+\rho(R^m,\phi_P)=0$ has no solution for $1\leq d\leq c$  and for any first-order signature of $R^m$ (see equation $(7.8)$ of the proof of ~\cite[Theorem 7.5]{CHL5}). Since we saw that $\rho_0(K_0)<0$, and since the set of first order signatures of $R^m$  is $\{0,0,\rho^1(R^m)\}$, this equation cannot be solved. It follows that no nonzero multiple of $K^m_n$ lies in $\mathcal{F}_{(n+1)}$ and hence no non-zero multiple of $K^m_n$ lies in $\mathcal{G}_{(n+3)}$  by ~\cite[Theorem 8.11]{COT}.
\end{proof}

\begin{remark}
  Since  the Alexander polynomials of the $R^m$ are (strongly) coprime, the set  $\{K^m_n~|~n\geq 0, m\geq 3\}$ is linearly independent in $\C$ by ~\cite[Theorem 7.7]{CHL5} (with the proof modified similarly to above).
\end{remark}

\begin{scholium}
For any $q>1$ there exist uncountably many sequences of knots $\{K_n^{m_n}\}_{n=1}^{\infty}$ such that $\|K_n^{m_n}\|^q > 0$ for all $n$ and $m_n$ but whose norms satisfy $$\lim_{n \to \infty}\|K_n^{m_n}\|^q = 0.$$  Every pair of elements in all of these sequences represent different concordance classes.
 \end{scholium}

\begin{proof}
  Alter the numbers $m_n$ used in the infinite sequence of knots whose norms tend to zero.  There are at least as many different choices of sequences of $m_n$ as there are real numbers.
\end{proof}

\section{The link concordance space is non-discrete for $q=1$}\label{section:links-q-equals-one}

In the case of knots, in order to show that the spaces of topological and smooth concordance classes of knots are not discrete, we had to restrict to $q>1$.  When $q=1$, we do not know that $d^1(K,U)=0$ for all topologically slice knots and links so the distance function $d^1$ is only well defined on $\mathcal{C}^m$, the set of smooth concordance classes of $m$-component links.  In this situation, which is somewhat complementary to that of Theorem~\ref{cor:notdiscrete} (topological, knots, $q>1$ versus smooth, links, $q=1$), we can find a sequence of links whose norms tend to zero without ever reaching zero.

\begin{proposition}
  For any $m \geq 2$ there exists a sequence of $m$-component links $L^1,L^2,\dots$ such that each link has $\|L^j\|^1 \neq 0$ but $\lim_{j \to \infty} \|L^j\|^1 = 0$.
In particular, the link concordance space $(\mathcal{C}^m,d^1)$ is not discrete.
\end{proposition}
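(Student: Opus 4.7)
The plan is to emulate the proof of Proposition~\ref{ex:specificdecreasing}, adapted to take advantage of the extra flexibility afforded by having multiple link components. In the knot setting, each application of the doubling operator $R^m$ forced the first-stage genus of the bounding grope to double (by Proposition~\ref{prop:effectinfections}(A)), which at $q=1$ made the norm level off at $\tfrac{1}{2}$ rather than shrink. For $m \geq 2$ components, I would construct an analogous iterative operator that spreads the geometric winding across several output components, so that no individual component's first-stage grope genus grows with the iteration.

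Concretely, I would start with an $m$-component base link $L^0$ that bounds a branch-symmetric grope of low height with small first-stage genus per component; a natural candidate for $m=2$ is the Bing double $BD(K_0)$ of the knot $K_0$ from the proof of Proposition~\ref{ex:specificdecreasing} (with $(m-2)$ split unknots adjoined for larger $m$), since each component is an unknot and the link bounds a grope in $B^4$ whose first stage has genus one per component with all higher stages of genus one. I would then define $L^{j+1} := R(L^j, \eta)$ for a carefully chosen $m$-component-preserving winding-number-zero pattern $(R,\eta)$, arranged so that (i) its geometric winding matrix spreads across components rather than concentrating in one (thus avoiding the genus-doubling of the knot case via Proposition~\ref{prop:effectinfections}(A)), and (ii) $\eta$ bounds a height-one genus-one grope in the complement of a system of slice disks for $R$, using Lemma~\ref{lem:doublingbounds} and a link-version of Lemma~\ref{lem:goodeta}. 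Iteratively applying Proposition~\ref{prop:effectinfections}, the resulting bounding grope for $L^j$ has height $j+c$ with bounded first-stage genus per component and all higher stages of genus one, giving $\|L^j\|^1 \leq C/2^{j+c}$, which tends to $0$.

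For the lower bound $\|L^j\|^1 > 0$, I would adapt the higher-order $L^{(2)}$ $\rho$-invariant argument in the proof of Proposition~\ref{ex:specificdecreasing}: verify that $(R,\eta)$ is a robust doubling operator (in the sense of \cite[Def.~7.2]{CHL5}) for the link setting, and invoke the analog of \cite[Thm.~7.5]{CHL5} to conclude that no positive multiple of $L^j$ lies in $\mathcal{F}_{(j+c)}$, hence $L^j \notin \mathcal{G}_{j+c+2}$ by \cite[Thm.~8.11]{COT}. Proposition~\ref{prop:boundedbelow} then supplies a matching lower bound $\|L^j\|^1 \geq 1/2^{j+c}$, which shows non-vanishing while remaining compatible with the convergence to zero.

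The principal technical obstacle is engineering the pattern $(R,\eta)$ so that the first-stage genus spreading across components genuinely works — that is, so Proposition~\ref{prop:effectinfections}(A) applied inductively yields $g_1(G_{L^{j+1}_s}) \leq g_1(G_{L^j_s})$ for each output component $s$, rather than the unavoidable doubling $g_1 \mapsto 2 g_1$ that occurs in the knot case. The winding-number-zero hypothesis forces the total geometric winding through each $\eta_i$ to be even and at least $2$, but with multiple output components these two strands can be divided between \emph{different} output components, each of which then inherits only a single strand through $\eta_i$. Realizing this while simultaneously retaining the robust-doubling structure required for the $\rho$-invariant obstruction, and controlling the genera of the higher grope stages, constitute the main technical content — and explain precisely why the argument works for $m \geq 2$ but remains out of reach for knots at $q=1$.
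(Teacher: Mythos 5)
Your approach diverges fundamentally from the paper's and has a genuine gap. The paper does not iterate a doubling operator at all: it starts from a Hopf link, Bing doubles one component $n$ times to get a $(2^n+1)$-component link $L$, and observes that the un-doubled Hopf component bounds a height-$n$ grope (all genus one stages) in the complement of $L$, while the remaining components bound disjoint disks once that Hopf component is deleted. The lower bound $\|L\|^1 \geq 1/2^{n-1}$ comes from non-vanishing Milnor invariants of length $2^n+1$ (via Lin's result that height-$n$ gropes kill $\bar{\mu}_L(I)$ for $|I|\leq 2^n$), not from $L^{(2)}$ $\rho$-invariants. Finally, components are banded together to reach a fixed component count $m$, and realizability of the relevant Milnor invariants with $m$ distinct indices (\cite[Theorem~7.2]{C4}, \cite{O1}) carries the lower bound through.

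The gap in your plan is the claim that, because there are multiple output components, the two strands of $R$ passing through an $\eta_i$ can be assigned to different output components $R_s$, ``each of which then inherits only a single strand through $\eta_i$.'' This is incompatible with the winding-number-zero hypothesis. Winding number zero means every entry $\ell k(\eta_i, R_s)$ of the winding matrix vanishes — not merely the column sums. If $R_s$ passes through $\eta_i$ with geometric count $w_i^s = 1$, then $\ell k(\eta_i, R_s) = \pm 1 \neq 0$. Hence for each pair $(i,s)$ one must have $w_i^s = 0$ or $w_i^s \geq 2$. Plugging this into Proposition~\ref{prop:effectinfections}(\ref{item:effect-infections-A}), any output component $s$ that sees a nontrivial input grope satisfies $g_1(G_{R(L)_s}) = \sum_j w_j^s g_1(G_{L_j}) \geq 2 g_1(G_{L_{j_0}})$ for some $j_0$, so the first-stage genus still doubles at each iteration. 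At $q=1$ a grope with first-stage genus $2^n$, height $n+2$, and all higher stages genus one has $\|\Sigma\|^1 = 2^n/2^{n+1} = 1/2$, so the norms level off rather than tend to zero — precisely the obstruction that prevents the knot argument from working at $q=1$, and it is not removed by passing to links via your construction. Dropping the winding-number-zero condition is not an option either, since both Lemma~\ref{lem:doublingbounds} (existence of $G_\eta$) and the robust-doubling-operator $\rho$-invariant lower bound require it.
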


Note that the same construction as in the following proof will also provide the proof for $q>1$, but we focus on $q=1$ since that is the outstanding case.
For $q>1$, the only difference is to replace $1/2^{n-1}$ with $1/(2q)^{n-1}$ in the proof below.

\begin{proof}
Start with a Hopf link $H=L_0 \cup L_1$, and Bing double the $L_1$ component $n$ times, to obtain a link $L = L_0 \cup BD_n(L_1)$ with $2^{n}+1$ components.  According to Lin~\cite{Lin1991} (see also \cite[Corollary~6.6]{Otto-2012}), if a link bounds a grope of height $n$, then the Milnor invariants $\ol{\mu}_L(I)$ vanish for $|I| \leq 2^n$.

The component $L_0$ bounds a symmetric grope $\Sigma_0$ of height $n$, where all the surface stages are of genus one.  In fact this grope is embedded in $S^3 \sm L$.  The norm $\|\Sigma_0\|^1 = 1/2^{n-1}$.  With $L_0$ deleted, $BD_n(L_1)$ is an unlink, so bounds a collection of disks in the interior of $B^4$, which we call unlinking disks. Denote the union of $\Sigma_0$ with the unlinking disks by $\Sigma$.  We have $\|\Sigma\|^1=1/2^{n-1}$ and thus $\|L\|^1 \leq 1/2^{n-1}$. (Strictly speaking, since we do not allow disks, we replace a small neighborhood of each of the disks with an arbitrary height grope, to obtain a sequence of gropes with length tending to zero; the resulting infimum is zero.)

Next, nonrepeating Milnor's invariants of length $2^n + 1$ are nonvanishing for $L$ by \cite[Theorem~8.1]{C4}, thus $L$ does not bound any grope of height $n+1$.  By Proposition~\ref{prop:boundedbelow} we have that $\|L\|^1 \geq 1/2^{n-1}$, so that we in fact determine $\|L\|^1 = 1/2^{n-1}$ precisely.

We need to improve this to find a sequence of links realizing a subsequence of the same sequence of norms, but for which each link in the sequence has the same number of components~$m$, for $m$ at least~$2$.
To achieve this, choose a collection of bands for the components in the complement of the unlinking disks for $BD_n(L_1)$, that connect components of $L$ as desired.  As long as the resulting multi-index, obtained from identifying the indices of banded together components, has a nonvanishing Milnor's invariant associated to it, the same lower bound for the norm applies.  For each multi-index $I$ and $m\geq 2$,  in \cite[Theorem~7.2]{C4}, the first author defined the integer $\delta(I)$ to be the least nonzero element of $\{|\ol{\mu}_L(I)|\}$, where $L$ ranges over all $m$-component links, if there is a link $L$ with $\mu_L(I)\neq 0$. Otherwise $\delta(I)=0$.  In the proof of \cite[Theorem~7.2]{C4}, for each $m\geq 2$ and each multi-index $I$, the first author showed that the Milnor's invariants of the examples we constructed above (by iterated Bing doubles and banding the components together) realize $\delta(I)$.  Moreover, for $m \geq 2$ and for any $N>0$, there exists a multi-index $I$ with $m$ distinct indices, with $|I| \geq N$, and with $\delta(I) \neq 0$ by ~\cite{O1}.  This gives rise to a sequence of $m$-component links $\{J_1,J_2,\dots\}$ with lower bounds on their norms given by $\|J_i\| \geq 1/2^{F(i)}$, for some strictly increasing function  $F\colon \mathbb{N} \to \mathbb{N}$.  That is, $J_i$ is obtained from the Hopf link by Bing doubling one of the components $F(i)+1$ times and then performing some band moves.

We claim that the function $F$ also determines an upper bound $\|J_i\|^1 \leq 1/2^{F(i)}$.
To see this, we need to see that the banded together link $J_i$ still bounds a grope of height $F(i)+1$. We construct this grope as we move into $D^4$ in the radial direction.  First perform saddle moves to cut the bands.  This yields the original $(2^{F(i)+1} +1)$-component link $L$ constructed above for $n=F(i)+1$.  Next attach the grope $\Sigma_0$ in a radial slice, then move slightly further into the 4-ball to attach the unlinking disks for $L$ with $L_0$ deleted.  This constructs a grope with the same combinatorics as before, except there are fewer link components and therefore fewer disk (or more accurately, arbitrary height grope) components to the grope.  This completes the proof of the claim.
Thus we determine the norms of the sequence of links $J_1,J_2,\dots$ precisely, as $\|J_i\|^1 = 1/2^{F(i)}$.  Since $F$ is strictly increasing, this completes the proof that the link concordance space is non-discrete.

\end{proof}

\begin{remark}
We note that, as in the knot case above, since one can choose different Milnor's invariants of length $n$ to be realised by the links constructed, there are in fact uncountably many distinct sequences of (concordance classes of) links having the property that their norms limit to zero but are all nonzero.
\end{remark}

\section{Lower bounds from higher order $\rho$-invariants}\label{section:higher-order-rho-invariants}

In this section will define lower bounds on the grope norms which can arise from all possible gropes of height $n$.  We will do this by investigating obstructions from higher order $\rho$-invariants.  The next definition first appeared in~\cite{Ha2}.

\begin{definition}[Rational derived series]
  For a group $G$, the \emph{rational derived series} is defined inductively as follows.  Let $G^{(0)}_r := G$, and then let $$G^{(k+1)}_r := \ker\left(G^{(k)}_r \to G^{(k)}_r / [G^{(k)}_r,G^{(k)}_r] \to G^{(k)}_r / [G^{(k)}_r,G^{(k)}_r] \otimes_{\Z} \Q \right).$$
\end{definition}


\begin{definition}[Order $n$ signatures]
  For a $3$-manifold $M$ together with a representation $\phi \colon \pi_1(M) \to \Gamma$, the Cheeger-Gromov Von Neumann $\rho$-invariant $\rho^{(2)}(M,\phi) \in \R$ is defined~\cite{ChGr1},~\cite{ChWe}.  Given a $4$-manifold $W$ with $\partial W=M$, such that $\phi$ extends over $\pi_1(W)$, the $\rho$-invariant can be computed as the $L^{(2)}$-signature defect $\sigma^{(2)}(W, \Gamma) - \sigma(W)$, where $\sigma^{(2)}$ is the $L^{(2)}$-signature and $\sigma$ is the ordinary signature of $W$.  The $L^{(2)}$-signature is the signature of the intersection form on $H_2(W;\mathcal{N}\Gamma)$, where $\mathcal{N}\Gamma$ is the Von Neumann algebra of $\Z\Gamma$. For more details see \cite[Section~5]{COT}, \cite{ChaO}.

  For $n \geq 0$, the set of \emph{order $n$ signatures} of a knot $K$ is the set of real numbers given by $\rho$-invariants $\rho^{(2)}(M_K,\phi \colon \pi_1(M_K) \to \Gamma)$, where $M_K$ is the zero surgery manifold, $\phi \colon \pi_1(M_K) \to \pi_1(W) \to \pi_1(W)/\pi_1(W)^{(n+1)}_r =: \Gamma$ and $W$ is an $n$-solution for $K$.
\end{definition}

Note that there is a unique order $0$ signature, where $\Gamma =\Z$, so that the $\rho^{(2)}$-invariant is equal to the average of the Levine-Tristram signature function over $S^1$~\cite[Section~5]{COT}.

\begin{definition}[Extendable branches]
Suppose a knot $K \in \mathcal{G}_n$ but $K \notin \mathcal{G}_{n+1}$, and $K$ bounds a symmetric grope $\Sigma$ of height $n$.  A set of branches is \emph{extendable} if there exists an embedded branch-symmetric grope $\Sigma'$ such that $\Sigma \subset \Sigma'$, and the branches in question are contained in branches of $\Sigma'$ of length $n$ (recall that if all branches were of length $n$ then the whole grope would be a symmetric grope of height $n+1$).
\end{definition}

In previous sections, we used the fact that if a knot $K$ is not in $\mathcal{G}_{n+3}$, then the grope norm satisfies $\|K\|^q \geq 1/(2q)^{n+1}$ by Proposition~\ref{prop:boundedbelow}.  In this section, we will show that order $n$ $\rho$-invariants, as well as obstructing a knot from lying in $\mathcal{G}_{n+3}$, can give lower bounds on the number of \emph{non-extendable} branches of any height~$n+2$ grope.  This can improve our lower bound for the infimum of the grope length function, taken over all gropes of height $n+2$.  Controlling the set of order $n$ $\rho$-invariants uses the existence of a grope of height~$n+2$. It could be that our knot bounds a very simple grope of height $n+1$, and thus we cannot rule out $\|K\|^q = 1/(2q)^{n}$; Corollary~\ref{cor:L2-lower-bound} below gives the precise statement.  Still, Theorem~\ref{theorem:rho-invariant-lower-bound} does represent a refinement of our lower bounds.

In fact, for the proof of Theorem~\ref{theorem:rho-invariant-lower-bound}, we could have begun with a grope of any height.  However the less we restrict, the larger the set of $\rho$-invariants whose infimum gives a lower bound on the number of non-extendable branches. (Taking the infimum over a larger set can of course result in a smaller infimum.)  There are practical advantages to the way we have proceeded.  By restricting to the set of all possible order~$n$ $\rho$-invariants, there exist, at least for examples constructed from iterated infections, techniques which can show the infimum to be nonzero, and indeed arbitrarily large.  We refer to~\cite[Theorem~4.5]{Hor3} for explicit examples.

Denote the set of order $n$ signatures of $K$ by $\mathcal{S}_n(K)$.  The following theorem was inspired by~\cite{Hor3}.

\begin{theorem}\label{theorem:rho-invariant-lower-bound}
Let $n \geq 1$.  Suppose that $K$ bounds a symmetric grope $\Sigma$ of height $n+2$ and first stage genus $g_1(\Sigma)$.  Let $e$ be the maximal cardinality amongst all sets of extendable branches and let $d = g_1(\Sigma)-e$.  Then
\[\inf \{|\rho_n|\, |\, \rho_n \in \mathcal{S}_n(K) \} \leq 4d.\]
\end{theorem}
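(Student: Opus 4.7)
The plan is to follow the template of Theorem~\ref{thm:refinedslicegenusbounds}'s proof, promoted to order $n$. First, I will fix a maximal extendable collection $\mathcal{B}$ of $e$ branches of $\Sigma$, together with a witnessing branch-symmetric grope $\Sigma' \supset \Sigma$ in which each branch of $\mathcal{B}$ is contained in a branch of length $n+2$; the remaining $d = g_1(\Sigma)-e$ branches admit no such extension. I will then build a simply-connected 4-manifold $V \subset B^4$ by ambient surgery, mirroring Theorem~\ref{thm:refinedslicegenusbounds}: for each of the $e$ extendable branches, surger push-offs of \emph{both} dual attaching circles into their second-stage surfaces in $\Sigma'$; for each of the $d$ non-extendable branches, surger along a push-off of only \emph{one} dual circle. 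Since any link in $\Int(B^4)$ bounds disks, $V$ is diffeomorphic to a punctured connected sum of copies of $S^2 \times S^2$ and $S^2 \widetilde\times S^2$, so $\pi_1(V)=1$ and $\sigma(V)=0$, and the first stage $\Sigma_{1:1}$ ambiently surgers to a null-homologous slice disk $\Delta \subset V$ for $K \subset \partial V = S^3$.

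Setting $W := V \sm \nu\Delta$, one has $\partial W = M_K$ and $H_1(W) \cong \Z$. The basis $\{S_i,B_i\}$ for $H_2(W)$ provided by the contracted and ambiently-surgered second-stage surfaces of $\Sigma$ satisfies $\pi_1(S_i),\pi_1(B_i) \subset \pi_1(W)^{(n)}$ because each branch of $\Sigma$ has length $n+1$, so $W$ is an $n$-solution of $K$ (compare Theorem~\ref{thm:height-n-plus-two-grope-implies-n-solvable}). I then set $\Gamma := \pi_1(W)/\pi_1(W)^{(n+1)}_r$ and let $\phi \colon \pi_1(M_K) \to \Gamma$ be the composition with inclusion, so that $\rho_n := \rho^{(2)}(M_K, \phi) \in \mathcal{S}_n(K)$.

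To bound $|\rho_n|$, I will decompose $H_2(W;\Z) = H \oplus Q$ orthogonally, where the rank-$2e$ summand $H$ is generated by the pairs $\{S_i,B_i\}$ indexed by extendable branches and $Q$ is generated by the $2d$ surfaces indexed by non-extendable branches. Extendability is used here: the extra level of grope supplied by $\Sigma'$ allows the surfaces in $H$ to be arranged with $\pi_1(S_i), \pi_1(B_i) \subset \pi_1(W)^{(n+1)}_r$, so they map trivially to $\Gamma$ and span a rank-$2e$ hyperbolic $\NG$-subspace of $H_2(W;\NG)$ in addition to a rank-$2e$ hyperbolic $\Z$-subspace, contributing zero to both $\sigma(W)$ and $\sigma^{(2)}(W,\Gamma)$. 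The orthogonal complements have rank at most $2d$ in both the integral and $\NG$-coefficient intersection forms, so $|\sigma(W)| \leq 2d$ and $|\sigma^{(2)}(W,\Gamma)| \leq 2d$, whence by the triangle inequality
\[ |\rho_n| \;=\; |\sigma^{(2)}(W,\Gamma) - \sigma(W)| \;\leq\; 4d. \]
Taking the infimum over $\mathcal{S}_n(K)$ then yields the claim.

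The main obstacle is verifying the commutator-depth claim used in the third step: that extendability genuinely places the $S_i, B_i$ from extendable branches into the rational derived subgroup $\pi_1(W)^{(n+1)}_r$ rather than merely into $\pi_1(W)^{(n)}$. This will require tracking the effect of each ambient surgery and cap contraction on $\pi_1$-classes of surfaces, in the spirit of the grope-to-solution argument of~\cite[Section~8]{COT}, noting that the passage from the ordinary to the rational derived series creates no extra difficulty because the free-abelian first homology of each surface injects into the relevant $\pi_1(W)^{(k)}_r / \pi_1(W)^{(k+1)}_r$.
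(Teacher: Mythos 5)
Your proposal diverges from the paper's argument in a few places, some cosmetic and some serious. Let me flag the serious one first.

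The real content of the proof is the claim that the $\NG$-intersection form, restricted to the ``complement'' of the hyperbolic submodule coming from the extendable branches, has $L^{(2)}$-rank controlled by the number of non-extendable branches. You assert this as if it were a routine rank count: ``the orthogonal complements have rank at most $2d$ in both the integral and $\NG$-coefficient intersection forms.'' But $H_2(W;\NG)$ is not $H_2(W;\Z)\otimes\NG$, and there is no elementary reason its $L^{(2)}$-dimension should be bounded by the integral second Betti number, nor that the ``complement'' should be a well-behaved module on which one can even take a signature. The paper's proof devotes most of its length to precisely this step, broken into four claims: (i) the submodule $E$ generated by the lifted surfaces from extendable branches is a \emph{direct summand}; (ii) $E$ is \emph{free}; (iii) $|\rho^{(2)}(M_K,\phi)|\leq\dim^{(2)}(D)$ where $D = H_2(W;\NG)/E$, which requires passing to the projective quotient of $D$, using that the Von Neumann torsion submodule pairs trivially, and extending the form by zero on a complementary projective; and (iv) $\dim^{(2)}(D)\leq 4d$, which invokes Cha's $L^{(2)}$-dimension inequality (Theorem~3.11 of \cite{Cha-2014-2}) comparing $\NG$-dimensions to $\Q$-dimensions via the chain complex $C_*(W;\Z\Gamma)$ — and this inequality is applicable precisely because $\Gamma$ is PTFA, hence amenable and in Strebel's class $D(\Q)$. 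You identified the $\pi_1$-depth verification as ``the main obstacle''; in fact that is a one-line observation in the paper (one uses $\pi_1(W)^{(n+1)} \subset \pi_1(W)^{(n+1)}_r$), and the genuine difficulty is the $L^{(2)}$-dimension counting you have skipped.

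Secondly, your surgery prescription does not match the paper's, and the paper is explicit that this matters. You propose asymmetric surgery (one dual circle) on non-extendable branches, mirroring Theorem~\ref{thm:refinedslicegenusbounds}. The paper instead performs \emph{symmetric} surgery on \emph{every} branch, and flags the change: ``The difference from the proof of Theorem~\ref{thm:refinedslicegenusbounds} is that we perform symmetric surgery everywhere in the current proof.'' The upshot is a single hyperbolic form on $H_2(W;\Z)\cong\Z^{4g}$ with $\sigma(W)=0$ exactly, and the clean decomposition $\Z^{4d}\oplus\Z^{4e}$ needed for the later count. This also exposes an arithmetic slip in your write-up: each extendable branch contributes two surgeries and hence rank $4$, so the lifted hyperbolic submodule has $\Z$-rank $4e$, not $2e$, and the complement has rank $4d$, not $2d$. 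Your triangle-inequality endgame $|\rho_n|\leq |\sigma^{(2)}|+|\sigma(W)|\leq 2d+2d$ would then become $\leq 4d + 4d = 8d$, missing the stated bound; the paper instead gets $\sigma(W)=0$ on the nose and shows $|\sigma^{(2)}(W,\Gamma)|\leq\dim^{(2)}(D)\leq 4d$ directly. The strategy as written does not close.
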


\begin{proof}
We follow the idea of the proof of Theorem~\ref{thm:refinedslicegenusbounds}.    Let $g = g_1(\Sigma)$ and let $\a_1,\dots,\a_{2g}$ be the basis curves on the first stage surface of $\Sigma$ that bound the higher grope stages.  Perform symmetric surgery on the $\a_i$.
Here we mean that we perform surgery on $D^4$ along a thickening $S^1 \times D^3$ of a push-off of an $\a_i$ curve -- each such surgery adds a connected $S^2 \times S^2$ or $S^2 \wt{\times} S^2$ summand to $D^4$ -- and then we use the core of the resulting pair of surgery disks to reduce the genus of $\Sigma_{1:1}$ with either symmetric surgery.  We obtain a 4-manifold $V$ with a slice disk $\Delta$ for $K$ arising from the surgered first stage of $\Sigma$.  Define $W:= V -\nu \Delta$.

As in \cite[Theorem~8.11]{COT}, and as explained above in the proof of Theorem~\ref{thm:refinedslicegenusbounds}, we obtain a collection of surfaces $S_i, B_i$, $i=1,\dots, 2g$.  The difference from the proof of Theorem~\ref{thm:refinedslicegenusbounds} is that we perform symmetric surgery everywhere in the current proof, whereas before we did asymmetric surgery for the non-extendable branches.  Now we want to look at signatures associated to $(n)$-solutions i.e.\ height $n+2$ gropes only, in order to have a practically useful condition; as remarked above, in favourable situations we actually can show that the set of order $n$ signatures,  restricted in this way, is bounded below.

The surfaces $B_i$ are constructed from the second stage surfaces of $\Sigma$, capped off by the surgery disks.  The surfaces $S_i$ come from the dual surgery spheres, pushed off the contraction and then tubed into the $B_i$ to remove intersections between them that appear while pushing off the contraction.
The surfaces are framed and embedded, generate $H_2(W;\Z) \cong \Z^{4g} = \Z^{4d} \oplus \Z^{4e}$.  The $\Z^{4d}$ summand corresponds to non-extendable branches and the $\Z^{4e}$ summand corresponds to the extendable branches.  The intersection form of $W$ on $H_2(W;\Z)$ is hyperbolic, since the intersections between the surfaces $S_i, B_i$ are $S_i\cdot S_j =0 = B_i \cdot B_j$ and $S_i \cdot B_j = \delta_{ij}$.  The ordinary signature $\sigma(W)$ of $W$ therefore vanishes.

Define $\Gamma := \pi_1(W)/\pi_1(W)^{(n+1)}_r$, the quotient by the $(n+1)^{th}$ rational derived subgroup.  The group $\Gamma$ is poly-torsion-free-abelian (PTFA -- see \cite[Section~2]{COT}) and is therefore amenable and in Strebel's class $D(\Q)$~\cite{Str}.  This will be useful to bound the rank of the $\Z\Gamma$ homology shortly.  Note that $\partial W=M_K$, the zero surgery on $K$, and $W$ is an $n$-solution~\cite[Theorem~8.11]{COT}, so that $\rho^{(2)}(M_K, \phi \colon \pi_1(M_K) \to \Gamma) = \sigma^{(2)}(W,\Gamma) - \sigma(W) = \sigma^{(2)}(W,\Gamma) \in \mathcal{S}_n(K)$ is an $n^{th}$ order signature of~$K$.

The surfaces $S_i, B_i$, for $i = 2d+1,\dots, 2g$, were constructed from spheres and second stage surfaces belonging to branches of length $n+1$ that can be extended to have length at least $n+2$, therefore they satisfy $\pi_1(S_i) \subset \pi_1(W)^{(n+1)}$ and $\pi_1(B_i) \subset \pi_1(W)^{(n+1)}$.
Thus these surfaces lift to the $\Gamma$-cover.  Now work over the Von Neumann algebra $\mathcal{N}\Gamma$.  For $i = 2d+1,\dots, 2g$ the surfaces $S_i,B_i$ define elements of $H_2(W;\mathcal{N}\Gamma)$, in which they generate a submodule $E$, and the intersection form $\lambda_{\Gamma} \colon H_2(W;\mathcal{N}\Gamma) \times H_2(W;\mathcal{N}\Gamma) \to \mathcal{N}\Gamma$ restricted to $E$ is hyperbolic.

\begin{claim*}
  The submodule $E$ is a direct summand.
\end{claim*}
 We have a short exact sequence
 \[0 \to E \to H_2(W;\NG) \to H_2(W;\NG)/E \to 0.\]
The map
\[\ba{rcl}  s \colon H_2(W;\NG) & \to & E \\ y &\mapsto & \sum_{i=2d+1}^{2g} (y \cdot B_i)[S_i] +  \sum_{i=2d+1}^{2g} (y \cdot S_i)[B_i],   \ea\]
which is a homomorphism by linearity of the intersection pairing, splits this exact sequence.  This proves the claim that $E$ is a direct summand.

\begin{claim*}
  The submodule $E$ is free.
\end{claim*}

Suppose that $$C:=\sum_{i=2d+1}^{2g} n_i[S_i] +  \sum_{i=2d+1}^{2g} m_i[B_i] =0.$$
Taking the intersection $C \cdot B_k$ implies that $n_k=0$.  Similarly the intersection $C \cdot S_k =0$ implies that $m_k=0$.  This proves the claim that $E$ is free.
Let $D := H_2(W;\NG)/E$.  We need to investigate the $L^{(2)}$-dimension of $D$.

\begin{claim*}
  We have that $|\rho^{(2)}(M_K,\phi)| \leq \dim^{(2)}(D)$.
\end{claim*}

Although $D$ may not be a free $\NG$-module, following~\cite[pp.~4776--80]{Cha-2014-2}, we can define the $L^{(2)}$-signature $\sigma^{(2)}(W,\Gamma)$ by replacing $D$ with $P(D) = D/T(D)$, a projective quotient of $D$, where \[T(D)= \{x \in D \,|\, f(x)=0 \text{ for any homomorphism }f \colon D \to \NG\}\]
is the Von Neumann torsion submodule of $D$~\cite[p.~239]{Luc}.  The submodule $T(D)$ satisfies $\dim^{(2)}(T(D)) = 0$ so $\dim^{(2)}(P(D)) = \dim^{(2)}(D)$.  Note that $T(E \oplus D) = T(D)$ since $E$ is free.  Moreover, for any $t \in T(D)$ and for any $x \in E \oplus D$ we have $\lambda_W(t,x)=0$.  To see this note that $T(-)$ is functorial, and so the adjoint of $\lambda_W$ sends $t$ into $T((E\oplus D)^*)$.  But $T(A^*)=0$ for any $\NG$-module $A$ by \cite[Lemma~3.4]{Cha-2014-2}.  It follows that $\lambda_W(t,-) = 0 \in (E \oplus D)^*$.

Find an $\NG$-module $Q$ such that $P(D)\oplus Q \cong \NG^\ell$ is free.  Extend the intersection form $\lambda_W$ from $E \oplus P(D)$ to $E \oplus \NG^{\ell}$ by having it vanish on $Q$; we also denote the extended intersection form on $E \oplus \NG^\ell$ by $\lambda_W$.  We can then define the $L^{(2)}$-signature $\sigma^{(2)}(\lambda_W)$ as usual using the functional calculus~\cite[Section~5]{COT}, ~\cite[pp.~4779--80]{Cha-2014-2}.  We note that $|\sigma^{(2)}(\lambda_W|_{\NG^\ell})| \leq \dim^{(2)}(D)$, since the absolute value of the $L^{(2)}$ signature is always bounded above by the $L^{(2)}$-dimension of the underlying module, and extending by the zero form on $Q$ only increases the dimension of the $0$-eigenspace, leaving the dimensions of the positive and negative definite subspaces unaltered.  Now use that $\lambda_W|_{E}$ is nonsingular, and that it is defined on a free module, to change basis so that the direct sum decomposition $E \oplus \NG^\ell$ is orthogonal with respect to the intersection form.
This, together with the facts that the intersection form $\lambda_W|_E$ is hyperbolic and $\sigma(W)=0$, yields:
\[|\rho^{(2)}(M_K, \phi)| = |\sigma^{(2)}(W, \Gamma)|  = |\sigma^{(2)}(\lambda_W|_{\NG^\ell})| \leq \dim^{(2)} D.\]
Now we have one last claim.

\begin{claim*}
  We have $\dim^{(2)}(D) \leq 4d$.
\end{claim*}

To prove the claim we will use
the following dimension bound for homology over $\mathcal{N}\Gamma$ in terms of the dimension of the $\Q$ homology, which can be found in \cite[Theorem~3.11]{Cha-2014-2}.

\begin{thmn}[3.11 of \cite{Cha-2014-2}]\label{thm:dimension-bound}
Suppose $G$ is amenable and in $D(R)$ with $R = \Q$ or $\Z_p$, and $C_*$ is a projective
chain complex over $\Z G$ with $C_m$ finitely generated.
 If $\{x_i\}_{i \in I}$ is a collection of $m$-cycles in $C_m$, then for the submodules
$H \subset H_m(\mathcal{N}G \otimes_{\Z G} C_*)$ and $\ol{H} \subset  H_m(R \otimes_{\Z G} C_*)$ generated by $\{[1_{\mathcal{N}G} \otimes x_i]\}_{i\in I}$ and
$\{[1_{R} \otimes x_i]\}_{i\in I}$ respectively, we have:
\[\dim^{(2)} H_m(\mathcal{N}G \otimes_{\Z G} C_*) - \dim^{(2)} H \leq  \dim_R H_m(R \otimes_{\Z G} C_*) - \dim_R \ol{H}.\]
\end{thmn}

As noted above, $\Gamma=G$ satisfies the hypothesis of Theorem~\ref{thm:dimension-bound}.  Apply the theorem with $R=\Q$, $G=\Gamma$ and $m=2$. Let $C_* = C_*(W;\Z\Gamma)$ and take the $x_i$ to be given by the lifts to the $\Gamma$-cover of the surfaces $S_i, B_i$ corresponding to extendable branches. We have $\dim_{\Q} H_2(\Q \otimes_{\Z \Gamma} C_*(W;\Z\Gamma)) = \dim_{\Q} H_2(W;\Q) = 4g$, $\dim_{\Q} \ol{H} = 4e$ and $\dim^{(2)} H = \dim^{(2)} E = 4e$.  Thus Theorem~\ref{thm:dimension-bound} and additivity of the $L^{(2)}$ dimension imply that
\begin{align*}
 \dim^{(2)} D = \dim^{(2)} H_2(W;\mathcal{N}\Gamma) - \dim^{(2)} E &  \leq \dim_{\Q} H_2(W;\Q) - \dim_{\Q} \Q \otimes E\\  &= 4g -4e = 4d.
\end{align*}
This completes the proof of the claim that $\dim^{(2)}(D) \leq 4d$. Combined with the inequality $|\rho^{(2)}(M_K, \phi)| \leq \dim^{(2)}(D)$ proved above, this completes the proof of the Theorem~\ref{theorem:rho-invariant-lower-bound}, by the following logic.  We have shown that given a height $n+2$ symmetric grope with $d$ non-extendable branches, there exists an order~$n$ $\rho^{(2)}$-invariant which is at most $4d$.  Considering all possible gropes of height $n+2$, they correspond to possibly different representations $\phi \colon \pi_1(M_K) \to \Gamma$, so the infimum of the order~$n$ signatures becomes a lower bound for the number of non-extendable branches of any height $(n+2)$ grope.

\end{proof}

\begin{corollary}\label{cor:L2-lower-bound}
For each $n\geq 0$ we have
  \begin{align*} \frac{\inf_{\rho_n \in \mathcal{S}_n(K)} \{|\rho_n|\}}{4(2q)^{n+1}} &\leq
  \inf \left\{ \|\Sigma\|^q \, \begin{array}{|ll} K \text{\emph{ bounds a branch-symmetric grope }}\Sigma \text{
 \emph{all of whose}} \\  \text{\emph{branches have length at least }} n+1 \end{array}
  \right\} \\
  &\leq \inf \{\|\Sigma\|^q \, | \, K
 \text{\emph{ bounds a symmetric grope }}\Sigma \text{ \emph{of height} } \geq n+2\}.\end{align*}
Thus, for each $n\geq 0$, we have
\[\min \left\{ \frac{\inf_{\rho_n \in \mathcal{S}_n(K)} \{|\rho_n|\}}{4(2q)^{n+1}}, \frac{1}{(2q)^{n}} \right\} \leq  \|K\|^q.\]
\end{corollary}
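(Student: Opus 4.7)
The plan is to derive Corollary~\ref{cor:L2-lower-bound} directly from Theorem~\ref{theorem:rho-invariant-lower-bound} and Proposition~\ref{prop:boundedbelow}. The middle inequality in the displayed pair is immediate: every symmetric grope of height at least $n+2$ is a branch-symmetric grope in which every branch has length at least $n+1$, so the infimum over the narrower class is taken over a subset of the wider class and is therefore at least as large as the infimum over the wider class.

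The substantive step is the leftmost inequality. Fix any branch-symmetric grope $\Sigma$ bounded by $K$ in which every branch has length at least $n+1$. Truncate $\Sigma$ to its first $n+2$ stages to obtain a symmetric grope $\Sigma_0$ of height $n+2$ bounded by $K$, with the same first-stage surface. Let $d$ denote the number of non-extendable branches of $\Sigma_0$; Theorem~\ref{theorem:rho-invariant-lower-bound} gives
\[\inf\{|\rho_n|:\rho_n\in\mathcal{S}_n(K)\}\le 4d.\]
The key observation is that if a branch of $\Sigma_0$ corresponds to a branch of $\Sigma$ of length at least $n+2$, then that branch of $\Sigma_0$ is automatically extendable: $\Sigma$ itself (or the truncation of $\Sigma$ in which this branch is cut to length exactly $n+2$) furnishes a branch-symmetric extension $\Sigma'\supset \Sigma_0$ in which the branch in question has length $n+2$. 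Consequently $d$ is at most the number $d'$ of branches of $\Sigma$ of length exactly $n+1$. By Lemma~\ref{lemma:counting-genera}, each such branch contributes at least
\[\frac{1}{q^{n+1}}\left(1-\sum_{k=2}^{n+2}\frac{1}{2^{k-1}}\right) = \frac{1}{(2q)^{n+1}}\]
to $\|\Sigma\|^q$. The positivity of the remaining branch contributions therefore gives
\[\|\Sigma\|^q \ge \frac{d'}{(2q)^{n+1}} \ge \frac{d}{(2q)^{n+1}} \ge \frac{\inf\{|\rho_n|\}}{4(2q)^{n+1}},\]
and taking the infimum over all admissible $\Sigma$ yields the leftmost inequality.

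For the final inequality I split the infimum defining $\|K\|^q$ according to the minimum branch length of the grope. If $\Sigma$ has some branch of length at most $n$, then Proposition~\ref{prop:boundedbelow} applied with parameter $n+1$ yields $\|\Sigma\|^q \ge 1/(2q)^{n-1}\ge 1/(2q)^n$ for $n\ge 1$, while for $n=0$ a length-zero branch contributes exactly $1=1/(2q)^0$ to $\|\Sigma\|^q$, giving the same bound. Otherwise every branch of $\Sigma$ has length at least $n+1$, and the leftmost inequality just established gives $\|\Sigma\|^q \ge \inf\{|\rho_n|\}/(4(2q)^{n+1})$. In either case $\|\Sigma\|^q$ is at least the claimed minimum, and taking the infimum over all $\Sigma$ completes the proof.

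The main obstacle is the bookkeeping around extendability: verifying that a non-extendable branch of the truncation $\Sigma_0$ must correspond to a branch of $\Sigma$ of length exactly $n+1$, so that the count $d$ of non-extendable branches is geometrically realised by length-$(n+1)$ branches of $\Sigma$. Once this is in hand the numerical bounds are a direct consequence of Lemma~\ref{lemma:counting-genera} and the case split described above.
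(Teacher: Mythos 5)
Your proof is correct and follows essentially the same strategy as the paper: truncate $\Sigma$ to a height-$(n+2)$ symmetric subgrope, apply Theorem~\ref{theorem:rho-invariant-lower-bound} to bound the number $d$ of non-extendable branches from below, observe that branches of $\Sigma$ of length $\geq n+2$ furnish an extendable set so $d$ is bounded above by the number of length-$(n+1)$ branches, and use Lemma~\ref{lemma:counting-genera} for the per-branch contribution of $1/(2q)^{n+1}$. You spell out the extendability bookkeeping and the $n=0$ case a bit more explicitly than the paper does, and you invoke Proposition~\ref{prop:boundedbelow} with a slightly different (but equally valid) parameter, but these are cosmetic variations on the same argument.
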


\begin{proof}
Let $\Sigma$ be a branch-symmetric grope where each branch has length at least $n+1$ and let $m$ be the number of branches of length $n+1$.  Then $\Sigma$ contains a subgrope $\Sigma^\prime$ that is a symmetric grope of height $n+2$.   If $d$ is the number of non-extendable branches of $\Sigma^\prime$, then $d\leq m$.  By the previous theorem, $\Sigma^\prime$ must have at least $\frac{\inf_{\rho_n \in \mathcal{S}_n(K)} \{|\rho_n|\}}{4}$ non-extendable branches.  Each branch of $\Sigma$ of length $n+1$ contributes at least $1/(2q)^{n+1}$ to $\|\Sigma^\prime\|^q$.   Thus we see that
\[
\frac{\inf_{\rho_n \in \mathcal{S}_n(K)} \{|\rho_n|\}}{4(2q)^{n+1}} \leq \frac{d}{(2q)^{n+1}} \leq \frac{m}{(2q)^{n+1}} \leq \|\Sigma\|^q.
\]
This completes the proof of the first statement of the corollary.

For the second part, suppose $K$ bounds a branch-symmetric grope and let $n \geq 0$ be fixed.  If all the branches of $\Sigma$ have length at least $n+1$ then by the proof of the first part, we see that $\frac{\inf_{\rho_n \in \mathcal{S}_n(K)} \{|\rho_n|\}}{4(2q)^{n+1}} \leq \|\Sigma\|^q$.  Otherwise, there is some branch of length at most $n$, so by Proposition~\ref{prop:boundedbelow} we have $\|\Sigma\|^q \geq 1/(2q)^{n}$.
\end{proof}

\section{Quasi-isometries to knot concordance with the slice genus metric}\label{section:quasi-isometries}

Recall that a map of metric spaces $f \colon (X,d_X) \to (Y,d_Y)$ is a \emph{quasi-isometry} if there exist constants $A \geq 1$ and $B \geq 0$ such that
\[ \frac{1}{A}d_X(x,y) -B  \leq d_Y(f(x),f(y)) \leq A\cdot d_X(x,y) +B \]
for any $x,y \in X$, and if there is a constant $C \geq 0 $ such that for any $z \in Y$ there exists an $x\in X$ such that $d_Y(z,f(x)) \leq C$.

Let $d_s$ be the metric on $\C$ defined by the slice genus, which was studied in detail in~\cite{Cochran-Harvey:2014-1}.  Let $\|\text{--}\|_s$ be the associated norm.
Not only are the slice and grope metric spaces not isometric, but we show that the identity is not even a quasi-isometry.   It is unknown whether there is another quasi-isometry between them, but we do not expect one to exist.

\begin{proposition}
  For any $q \geq 1$, neither of the identity maps $\Id \colon (\C, d_s) \to (\C, d^q)$ nor $\Id \colon (\C, d^q) \to (\C, d_s)$ are quasi-isometries.
\end{proposition}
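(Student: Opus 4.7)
The overall strategy is to exhibit, for each $q\ge 1$, a sequence of knots $\{K_n\}_{n\ge 1}$ with $d_s(K_n,U)\to\infty$ while $d^q(K_n,U)$ stays bounded. First I observe the inequality $d^q(K,J)\le d_s(K,J)$ valid for all knots $K,J$: a genus-$g$ oriented cobounding surface is a branch-symmetric grope with first stage of genus $g$ and no higher stages, whose $\|\cdot\|^q$-norm is exactly $g$. This inequality already supplies the upper bound in the quasi-isometry condition for $\Id\colon(\mathcal{C},d_s)\to(\mathcal{C},d^q)$ with constants $A=1,B=0$, and the lower bound in the quasi-isometry condition for $\Id\colon(\mathcal{C},d^q)\to(\mathcal{C},d_s)$. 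Thus the only remaining inequalities to falsify are, for any $A\ge 1$ and $B\ge 0$, the bound $\tfrac{1}{A}d_s(K,J)-B\le d^q(K,J)$ (respectively $d_s(K,J)\le A\,d^q(K,J)+B$), and the sequence above falsifies both of these simultaneously taking $J=U$.

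The main case $q>1$ proceeds by taking $K_n$ to be the connect sum of $n$ copies of the positive untwisted Whitehead double $J:=\mathrm{Wh}^+(T_{2,3})$ of the right-handed trefoil. The knot $J$ has trivial Alexander polynomial and is therefore topologically slice by Freedman's theorem, so Remark~\ref{remark:top-slice-knots} gives $d^q(J,U)=0$ for every $q>1$; the triangle inequality then yields $d^q(K_n,U)\le n\cdot d^q(J,U)=0$. On the other hand, Hedden's computation shows $\tau(J)=1$, and since $\tau$ is additive under connect sum and satisfies $|\tau(K)|\le g_s(K)$, we obtain $g_s(K_n)\ge \tau(K_n)=n$, hence $d_s(K_n,U)\ge n\to\infty$. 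Plugging this sequence into the two quasi-isometry inequalities gives $\tfrac{1}{A}d_s(K_n,U)-B\le 0$, forcing $d_s(K_n,U)\le AB$, and $d_s(K_n,U)\le A\cdot 0+B=B$; both fail for $n$ sufficiently large.

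The boundary case $q=1$ is more delicate since Remark~\ref{remark:top-slice-knots} does not rule out $d^1(J,U)>0$ for topologically slice $J$, so the argument above does not immediately close. The natural plan is to use the same topologically slice sequence $K_n=\#^nJ$ and to bound $d^1(K_n,U)$ uniformly in $n$ by exploiting the known construction of arbitrarily tall smooth gropes bounded by topologically slice knots, balancing first-stage genus against grope height so that the quantity $g_1/2^{k-1}$ appearing in the $q=1$ norm stays controlled as the height $k$ grows. I expect the main obstacle to be precisely this last bookkeeping: one must show that although topologically slice knots may individually fail to have zero $\|\cdot\|^1$-norm, the geometric winding data of the iterated constructions forces $\|K_n\|^1$ to grow strictly more slowly than the $\tau$-invariant lower bound on $g_s(K_n)$, so that the ratio $d_s(K_n,U)/d^1(K_n,U)$ is unbounded.
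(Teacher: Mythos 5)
Your argument for $q>1$ is correct, and in fact it is cleaner than the paper's route for that range: since $\Wh^+(T_{2,3})$ has trivial Alexander polynomial it is topologically slice, Remark~\ref{remark:top-slice-knots} gives $\|J\|^q=0$ for $q>1$, and the triangle inequality plus the additivity of $\tau$ immediately gives $\|K_n\|^q=0$ while $\|K_n\|_s\ge n$. That is a legitimate simplification for the open range of $q$.

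However, your plan for the boundary case $q=1$ has a genuine gap, not a bookkeeping issue. You propose to keep the same sequence $K_n=\#^n J$ with $J=\Wh^+(T_{2,3})$ fixed and to ``bound $d^1(K_n,U)$ uniformly in $n$.'' There is no mechanism available to do this. The only upper bound the triangle inequality gives is $\|K_n\|^1\le n\|J\|^1$, and it is precisely an open problem whether a topologically slice knot such as $J$ has $\|J\|^1=0$. Worse, the known constructions of arbitrarily tall smooth gropes for a fixed topologically slice knot have higher-stage genera that grow exponentially with the height (see Remark~\ref{remark:top-slice-knots}), so the quantity $1-\sum_k 1/g_k^i$ does not tend to $0$; the $\|\cdot\|^1$-length of those gropes does not decrease as the height increases, and they cannot be used to push $\|K_n\|^1$ below a constant while $n\to\infty$.

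The correct move, and what the paper does, is to let the depth of Whitehead doubling grow along with the number of connected summands. Set $K:=\#^m\Wh^n_+(T_{2,3})$, where $n$ is chosen so that $m/2^n<1$. Iterated untwisted Whitehead doubling is a winding-number-zero doubling operator, so by Corollary~\ref{cor:doublingincreaselength} (equivalently Proposition~\ref{prop:effectinfections}) each application raises the grope height by one while keeping every surface stage of genus one; hence $\Wh^n_+(T_{2,3})$ bounds a height $n+1$ symmetric grope with all stages of genus one, so $\|\Wh^n_+(T_{2,3})\|^1\le 1/2^n$ and $\|K\|^1\le m/2^n<1$. Meanwhile Hedden's theorem gives $\tau(\Wh^n_+(T_{2,3}))=1$ for every $n$, so $\tau(K)=m$ and $\|K\|_s\ge m$. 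Choosing $m>A(B+1)$ and then $n>\log_2 m$ falsifies both inequalities simultaneously, and, since $\|K\|^q$ is non-increasing in $q$, this single two-parameter family works for every $q\ge1$. Replace your $q=1$ paragraph with this construction and the proof is complete.
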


\begin{proof}
  First we will show that for any $A \geq 1$ and for any $B\geq 0$ there exists a knot $K$ such that
\[ \frac{1}{A}\|K\|_s -B  > \|K\|^q.\]
  From this, we see that the left hand side of the condition for $\Id \colon (\C, d_s) \to (\C, d^q)$ to be a quasi-isometry is violated.  Since $q<q'$ implies that $\|K\|^q \leq \|K\|^{q'}$, it suffices to show this for $q=1$.
  Let $J$ be a knot with slice genus one such that $\tau(J)=1$ (for example a trefoil), where $\tau \colon \C \to \Z$ is the invariant from knot Floer homology~\cite{Ozsvath-Szabo-2003}.  Let $A,B$ be given as above.  Choose $m\in \mathbb{N}$ such that $m > A(B+1)$.  Then choose $n$ such that $m/2^n <1$; that is choose $n\in \mathbb{N}$ which is greater than $\log m /\log 2$.  Now define $K$ to be a connect sum of $m$ copies of the $n$-fold iterated positive Whitehead double of $J$, $K:= \#^m \Wh^n_+(J)$.  According to \cite{Hedden-tau}, $\tau(\Wh^n_+(J))=1$, so by additivity $\tau(K) =m$.  Therefore $\|K\|_s \geq m$ since $\tau(K) \leq \|K\|_s$ by \cite{Ozsvath-Szabo-2003}.  Thus the left hand side of the inequality above satisfies
  \[ \frac{1}{A}\|K\|_s -B \geq \frac{1}{A}m -B > \frac{1}{A}(A(B+1)) -B = 1.\]
  On the other hand, $\Wh^n_+(J)$ bounds a grope of height $n+1$, wherein all the stages are of genus one, and $K$, the connect sum of $m$ copies of $\Wh^n_+(J)$, bounds a grope of height $n+1$, with first stage genus $m$ and all higher stages genus one.  Therefore $\|K\|^1 \leq m/2^n < 1$, since the $q=1$ length of a grope of height one all of whose surfaces are genus one is $1/2^n$.  This shows that $K$ has the property desired.

  To show that the inverse identity map $\Id \colon (\C, d^q) \to (\C, d_s)$ is not a quasi-isometry, we prove that the right hand side in the defining condition is not satisfied.  That is, for any $A,B$ there exists a knot $K$ with
$\|K\|_s > A\|K\|^q +B$.  This is equivalent to $\|K\|_s/A - B/A > \|K\|^q$, and thus we can apply the argument above with $B/A$ replacing~$B$.

\end{proof}

\bibliographystyle{alpha}
\def\MR#1{}
\bibliography{Timbib070815}

\end{document}